\documentclass[11pt,reqno]{amsart}

\usepackage[a4paper,margin=30mm]{geometry}
\usepackage{amsmath,amssymb,mathtools,mathrsfs}
\usepackage{microtype}
\usepackage{enumitem}
\usepackage{xcolor}
\usepackage{hyperref}
\usepackage[nameinlink,capitalise,noabbrev]{cleveref}

\hypersetup{
	colorlinks=true,
	linkcolor=blue!55!black,
	citecolor=blue!55!black,
	urlcolor=blue!55!black,
	pdftitle={Smoothness of Shortest Isogenies to Frobenius Conjugates}
}

\allowdisplaybreaks
\numberwithin{equation}{section}
\newtheorem{theorem}{Theorem}[section]
\newtheorem*{heurw}{Heuristic W}
\newtheorem*{heurm}{Heuristic M}
\newtheorem{proposition}[theorem]{Proposition}
\newtheorem{lemma}[theorem]{Lemma}
\newtheorem{corollary}[theorem]{Corollary}

\newtheorem{definition}[theorem]{Definition}

\AddToHook{env/proposition/begin}{%
	\crefalias{theorem}{proposition}%
}

\AddToHook{env/lemma/begin}{%
	\crefalias{theorem}{lemma}%
}

\AddToHook{env/corollary/begin}{%
	\crefalias{theorem}{corollary}%
}

\crefname{theorem}{theorem}{theorems}
\Crefname{theorem}{Theorem}{Theorems}

\crefname{proposition}{proposition}{propositions}
\Crefname{proposition}{Proposition}{Propositions}

\crefname{lemma}{lemma}{lemmas}
\Crefname{lemma}{Lemma}{Lemmas}

\crefname{corollary}{corollary}{corollaries}
\Crefname{corollary}{Corollary}{Corollaries}

\DeclareMathOperator{\Tr}{\operatorname{tr}}
\DeclareMathOperator{\trd}{trd}
\DeclareMathOperator{\Aut}{Aut}
\DeclareMathOperator{\End}{End}
\DeclareMathOperator{\Hom}{Hom}
\DeclareMathOperator{\Pic}{Pic}
\DeclareMathOperator{\nrd}{nrd}

\DeclareMathOperator{\Emb}{Emb}
\DeclareMathOperator{\Idl}{Idl}
\DeclareMathOperator{\PIdl}{PIdl}

\newcommand{\F}{\mathbb F}
\newcommand{\Q}{\mathbb Q}
\newcommand{\Z}{\mathbb Z}

\newcommand{\Sp}{\mathcal S_p}
\newcommand{\Mp}{\mathcal M_p}
\newcommand{\Pplus}{P^{+}}
\newcommand{\Prob}{\mathrm{Pr}}
\newcommand{\BH}{\mathrm{BH}}
\newcommand{\EE}{\mathrm E}
\newcommand{\cO}{\mathcal O}
\newcommand{\cA}{\mathcal A}
\newcommand{\cB}{\mathcal B}

\newcommand{\cM}{\mathcal M}

\newcommand{\eps}{\varepsilon}
\newcommand{\ind}[1]{\mathbf 1_{\{#1\}}}
\newcommand{\floor}[1]{\left\lfloor #1\right\rfloor}
\newcommand{\ceil}[1]{\left\lceil #1\right\rceil}
\newcommand{\set}[1]{\left\{#1\right\}}

\title[Shortest smooth Frobenius isogenies]{Smoothness of Shortest Isogenies to Frobenius Conjugates}

\author{Yi Ouyang$^{1,2}$, Wei Wei$^1$ and Zheng Xu$^2$}
\address{$^1$School of Mathematical Sciences,    University of Science and Technology of China, Hefei 230026, Anhui, China}

\address{$^2$Hefei National Laboratory, University of Science and Technology of China, Hefei 230088, China}

\email{yiouyang@ustc.edu.cn}	
\email{veivei@mail.ustc.edu.cn}
\email{xuzheng1@mail.ustc.edu.cn}

\subjclass[2020]{11G20, 11N25, 11R52, 14K02}
\keywords{Supersingular elliptic curves, least isgeny degree, smooth number, Brandt operator, Wesolowski's $p^{1/3}$-attack}

\begin{document}
	
	\begin{abstract}
		
		Let \(E\) be chosen uniformly from the
		geometric isomorphism classes of supersingular elliptic curves over
		\(\overline{\mathbb{F}}_p\) and \(\delta_E\) be the minimum degree of isogenies from \(E\) to its Frobenius conjugate \(E^{(p)}\). We establish bounds for the smoothness probability of $\delta_E$
		and hence affirm  Wesolowski's  
		\(p^{1/3+o(1)}\) attack on the
		supersingular endomorphism ring problem is unconditional. We also establish a uniform asymptotic formula, with a main term expressed in terms of Dirichlet \(L\)-values, for the probability that a connection isogeny of degree $n$ exists between $E$ and $E^{(p)}$.
	\end{abstract}
	
	\maketitle
	
	\section{Introduction}
	Isogeny-based cryptography forms a distinctive family of post-quantum constructions grounded in the arithmetic of elliptic curves and quaternion algebras. 
	The landscape of isogeny-based cryptography shifted dramatically after Castryck and Decru, Maino et al., and Robert discovered polynomial-time attacks against SIDH and SIKE \cite{CD23,MMP+23,Rob23}. These attacks leveraged the auxiliary torsion-point information exposed by SIDH and, crucially, demonstrated the power of higher-dimensional isogeny techniques. 
	
	Following the polynomial time cryptanalytic breakthroughs against SIDH\cite{Rob23}, new constructions have branched into two primary research threads. The first seeks to protect sensitive information using new masking strategies or alternative representations, yielding M-SIDH and MD-SIDH \cite{FMP23}, as well as binSIDH and terSIDH constructed from artificial orientations \cite{BF23}. The second reuses higher-dimensional isogeny techniques to build constructive cryptographic primitives. Examples include the trapdoor-based encryption schemes FESTA and QFESTA \cite{BMP23,NO24}, and the unknown-degree isogeny encryption scheme POKÉ \cite{BM25}.
	
	SQIsign, first introduced in 2020, attracted substantial attention due to its compact signatures \cite{DKL+20}. A series of subsequent variants — SQIsignHD \cite{DLRW24}, SQIsign2D-West \cite{BDD+24}, SQIsign2D-East \cite{NOC+24}, SQIsign2D\(^2\) \cite{XLZO25}, and SQIsign2DPush \cite{NO26}—employ higher-dimensional isogeny representations to improve performance and reinforce security guarantees. Collectively, these developments highlight the need for rigorous analysis of the underlying computational problems in supersingular isogeny cryptography. In May 2026, NIST moved SQIsign forward into the third round of its Additional Digital Signatures standardization effort \cite{NISTIR8610}. More recently, Basso et al. \cite{BassoEtAl2026MIKE} introduced MIKE, an efficient and compact post-quantum non-interactive key exchange (NIKE) scheme.
	
	Under their respective security models and auxiliary assumptions, the security of SQIsign and SQIsign variants can be reduced to the hardness of the supersingular endomorphism-ring problem. Wesolowski established polynomial-time reductions between this problem and the supersingular isogeny path-finding problem \cite{Wes22}, allowing cryptanalytic attacks on the underlying hardness assumption to be approached through isogeny path finding. Find a nontrivial endomorphism is important since then.
	
	Prior to the work of Wesolowski \cite{Wesolowski2026}, the best-known classical and quantum algorithms for the general supersingular isogeny path-finding problem had time complexities \(\widetilde O(p^{1/2})\) \cite{DG16} and \(\widetilde O(p^{1/4})\) \cite{BJS14}, respectively.
	
	For a supersingular elliptic curve $E$ in characteristic \(p\), let
	\(E^{(p)}\) denote its Frobenius conjugate, and $\delta_E$ the least
	isogeny degree from \(E\) to \(E^{(p)}\): 
	\[ \delta_E  :=
	\min\limits_{0\neq\varphi\in\Hom(E,E^{(p)})}
	\deg\varphi. \]
	
	\cite{AubryOyonoVincent2026} tells that $\delta_E\leq \lfloor\frac{p}{2}^{\frac{1}{3}}\rfloor$, and \cite{Wesolowski2026} uses this establish a $p^{1/3+o(1)}$ attack. 
	
	However, in isogeny-based cryptography, isogenies of smooth degree are
	particularly amenable to efficient computation, since they can be
	represented and evaluated as compositions of low-degree isogenies
	\cite{DeFeo2017,Wesolowski2026}. For an isogeny of a large prime
	degree~$N$, given a generator of its kernel, the square-root
	V\'elu formulas allow its codomain to be computed and the isogeny to
	be evaluated at a point using
	$\widetilde{O}(\sqrt{N})$ field operations
	\cite{Velu1971,BernsteinDeFeoLerouxSmith2020}.
	This naturally leads to a finer arithmetic question concerning the
	prime factors of~$\delta_E$: how often is $\delta_E$
	$B$-smooth, or equivalently, how is its largest prime factor
	$P^+(\delta_E)$ distributed?

	The recent attack by Wesolowski\cite{Wesolowski2026}   on the
	supersingular endomorphism ring problem underlying the security of
	SQIsign  searches for a \(B\)-smooth isogeny $\varphi \colon E \longrightarrow E^{(p)}$.
	By composing \(\varphi\) with the Frobenius from \(E^{(p)}\)
	back to \(E\), one obtains a non-scalar endomorphism of \(E\).
	Wesolowski gave algorithms for the one-endomorphism and endomorphism-ring
	problems with time and space complexity \(p^{1/3+o(1)}\), under the
	heuristic assumption that the smoothness of \(\delta_E\) behaves like
	that of a uniformly random integer of comparable size. The following heuristic is essential to his attack: 
	
	\begin{heurw}[Heuristic 1, \cite{Wesolowski2026}] Let $p$ be a prime number, and let $E/\F_{p^2}$ be a uniformly random supersingular elliptic curve. Then
		\[\Prob\bigl(P^+(\delta_E)<B\bigr)\approx\rho(u)\approx  u^{-u(1+o(1))},\]
		where $u = \frac{\log(p/2)}{3 \log(B)}$ (uniformly as $p \to\infty$, if $(\log p)^{\eps} < u < (\log p)^{1-\eps}$ for a fixed constant $\eps$) and $\rho$ is the Dickman-de Bruijn function.
	\end{heurw} 
	Mamah~\cite{Mamah2026} stated another heuristic in his analysis of the effectiveness of Wesolowski's algorithm, this heuristic originally commes from \cite{BooherEtAl2024}:
	 \begin{heurm}[Heuristic 1, \cite{Mamah2026}]  For a fixed degree $n$, the probability that a supersingular curve $E$
		 	admits an $n$-isogeny to its Frobenius conjugate $E^{(p)}$ is
		 	\[ \Pr(A_{E,n}) \approx \sqrt{\frac{12n}{p}} .  \]	
		 \end{heurm}

	Our main result in this paper is the following theorem, which implies  the heuristic of Wesolowski as an application: 
	\begin{theorem}\label{thm:main} Fix $\eps\in (0,1/2)$.  There  exist constants $c_1,c_2$ and $M$ depending only on $\eps$, such that
		\begin{equation}\label{eq:main-bounds}
			\frac{c_1\rho(u)}{(\log p)^3}
			\leq
			\Prob\bigl(P^+(\delta_E)<B\bigr)
			\leq
			c_2\rho(u)\log p,
		\end{equation}
		if $p>M$ and 	$\frac{\log (p/2)^\eps}{3}<\log B <\frac{\log (p/2)^{1-\eps}}{3}$ (equivalently $\log (p/2)^\eps< u <\log (p/2)^{1-\eps}$). 
	\end{theorem}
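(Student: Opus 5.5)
We want to bound $\Prob(P^+(\delta_E)<B)$ by relating the event to the existence of smooth-degree isogenies $E\to E^{(p)}$ of degree about $p^{1/3}$, and then averaging over $E$ with Brandt-operator (quaternion ideal) counting. For each $n$, let $A_{E,n}$ be the event that $\Hom(E,E^{(p)})$ contains a primitive element of degree $n$, i.e.\ one not divisible by an integer $m>1$. The first step is the mass formula for $\sum_E w_E\,\#\{\varphi\in\Hom(E,E^{(p)}) : \deg\varphi=n\}$, with weights $w_E=1/|\Aut(E)|$. The trick is to compose with Frobenius: $\pi\circ\varphi\in\End(E)$ has reduced norm $pn$, trace $t$ with $t^2<4pn$, and satisfies a "twisted" condition. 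Counting such elements over all maximal orders gives, via Deuring/Eichler, a sum of Hurwitz class numbers $\sum_{t} H(4pn-t^2)$ restricted to the appropriate discriminants. This is the trace of a Brandt-type operator $B(pn)$ composed with the Frobenius involution. I expect the paper states this as a formula expressing $\#\{E: A_{E,n}\}$ through $L(1,\chi_{-4pn})$-type values; I will assume that uniform asymptotic. It gives
\[
\Prob(A_{E,n})\asymp \sqrt{n/p}\cdot \lambda(n),
\]
where $\lambda(n)$ is an $L$-value factor satisfying $(\log\log)^{-1}\ll\lambda\ll\log\log$, with error terms uniform for $n\le p^{1/3+o(1)}$. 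Since $\delta_E\le (p/2)^{1/3}$ and the primitive degrees are bounded above by this, the minimum is actually attained by $n$ in this range.

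For the \emph{upper bound}, the event $P^+(\delta_E)<B$ implies that $A_{E,n}$ holds for some $B$-smooth $n\le (p/2)^{1/3}$. A union bound gives
\[
\Prob \le \sum_{\substack{n\le (p/2)^{1/3}\\ P^+(n)<B}} \Prob(A_{E,n}) \ll \frac{\log\log p}{\sqrt p}\sum_{n\le X,\ B\text{-smooth}} \sqrt n,
\]
with $X=(p/2)^{1/3}$. By partial summation and $\Psi(x,B)\approx x\rho(\log x/\log B)$, uniform in the Hildebrand range $u<(\log p)^{1-\eps}$ (note $\log B>(\log p)^{\eps}$), the last sum is $\ll X^{3/2}\rho(u)\log p/ (u\log B)\cdot(\ldots)$. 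This is wasteful, since the union bound ignores that $\delta_E$ is the minimum. Done directly, it gives $X^{3/2}/\sqrt p\asymp 1$ times $\rho(u)$ up to log factors, which yields $c_2\rho(u)\log p$ after absorbing $\rho$-ratio estimates such as $\rho(u-v)\le \rho(u)u^{O(v)}$ for the dyadic ranges of $n$ slightly below $X$.

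For the \emph{lower bound}, restrict to $B$-smooth $n$ in a window $[X/(\log p)^{a},X]$ and use a second-moment (Cauchy–Schwarz) argument:
\[
\Prob\Bigl(\bigcup_n A_{E,n}\Bigr)\ge \frac{(\sum \Prob A_{E,n})^2}{\sum_{n,m}\Prob(A_{E,n}\cap A_{E,m})}.
\]
Two independent primitive isogenies of degrees $n,m$ span a rank-2 lattice in the rank-4 lattice $\Hom(E,E^{(p)})$. Counting pairs amounts to counting binary suborders, i.e.\ representations by a positive binary form $nx^2+bxy+my^2$ of discriminant at least $-4nm$ embedded in the relevant quaternion structure, so $\sum_E \#\{\text{pairs}\}\ll$ (class numbers of size $\sqrt{nm}$ times the $p$-adic factor). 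This should give $\Prob(A_n\cap A_m)\ll \Prob(A_n)\Prob(A_m)\cdot\mathrm{polylog}$.

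We still have to show the event forces $P^+(\delta_E)<B$. If a $B$-smooth $n\le X$ occurs but $\delta_E$ is not smooth, then $\delta_E<n$ is a second primitive degree. Bound this bad event by the same pair count with $m$ ranging over all non-smooth $m<n$: this contributes $\sqrt{nm}\cdot X/p$-type terms, which are smaller than the main term only if the window keeps $n$ not much bigger than $\delta_E$. This is where the $(\log p)^3$ loss appears.

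The main obstacle is this last step, controlling the joint events uniformly in $n,m\le p^{1/3}$ near the Minkowski bound, where the rank-2 sublattice has successive minima comparable to the covolume. I will try a reduction-theory bound on the number of $E$ for which $\Hom(E,E^{(p)})$ has two short vectors, expressed via the trace of $B(n)B(m)$ twisted by Frobenius, accepting polylogarithmic losses.
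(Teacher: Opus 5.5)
Your upper bound is in substance the paper's argument. A shortest connection is primitive and $\delta_E\le N$, so $\mathbf 1_{\{P^+(\delta_E)\le B\}}\le\sum_{n\le N,\,P^+(n)\le B}r_E^\circ(n)$. Averaging with the exact mass formula for $A_p^{\BH}(n)$ (in terms of $\sqrt{pn}\,L(1,\chi_{-4pn})$ and $\sqrt{pn}\,L(1,\chi_{-pn})$) and using the crude bound $\sqrt n\le\sqrt N$ gives $\ll(\log p)\Psi(N,B)/N$ directly. No partial summation or $\rho$-ratio estimates are needed. However, the $\log p$ comes from the unconditional bound $L(1,\chi)\le\log q+2$. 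The bounds $(\log\log p)^{-1}\ll\lambda(n)\ll\log\log p$ you assume for your factor $\lambda(n)$ are Littlewood's GRH-conditional estimates and are not available. This matters for the lower bound: Siegel alone gives only $L(1,\chi)\gg_\eta p^{-\eta}$, which loses a power of $p$. The paper gets a polylogarithmic loss from Tatuzawa's theorem with $\eta=1/\log p$, which gives $L(1,\chi_{D_n})>0.655e^{-2}/\log p$ for all but at most one $n$. Restricting selected degrees to squarefree $n$ makes $n\mapsto D_n$ injective, so the exceptional character costs at most one term.

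The lower bound has two genuine gaps.

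First, the window is in the wrong place. You select $n\in[X/(\log p)^a,X]$ with $X=(p/2)^{1/3}$. The expected number of bad pairs (a selected $n$ together with another primitive connection of degree $m\le n$) is $\frac1{h_p}\sum_n\sum_{m\le n}C_p(m,n)\ll X^3\rho(v)\log p/p$. The main term is only $\gg X^{3/2}\rho(v)/(\sqrt p\log p)$. Their ratio is $\asymp(X^{3/2}/\sqrt p)(\log p)^2$, which is large at $X\asymp N$; unsurprisingly so, since every curve has a primitive connection of degree $\le N$. The paper instead uses the window $(X/2,X]$ with $X=\lfloor N\gamma^{2/3}(\log p)^{-4/3}\rfloor$. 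Then $X^{3/2}/\sqrt p\asymp\gamma(\log p)^{-2}$ and $\EE P_E\le\frac12\EE S_E$ for small fixed $\gamma$. This shrinkage, together with Tatuzawa's $1/\log p$, is exactly the source of the $(\log p)^{-3}$. It has nothing to do with keeping $n$ close to $\delta_E$.

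Second, the pair bound on which everything rests is left open as your ``main obstacle''; this is the technical heart of the paper. The missing idea runs as follows.
\begin{enumerate}[leftmargin=2em]
\item For distinct primitive $\varphi_m,\varphi_n\colon E\to E^{(p)}$, put $i=\widehat{\varphi_m}\circ\pi_E$ and $j=\widehat{\varphi_n}\circ\pi_E$. These lie in the two-sided ideal $\mathfrak P_E$, which satisfies $\operatorname{trd}(\mathfrak P_E)\subseteq p\Z$ and $\mathfrak P_E^2=p\cO_E$.
\item Since $4pm,4pn<p^2$, this forces $i^2=-pm$, $j^2=-pn$ and $ij+ji=-pt$ with $t^2<4mn$.
\item Hence $R=\Z\langle 1,i,j,ij\rangle$ is an order of reduced discriminant $p^2(4mn-t^2)$.
\item By Skolem--Noether and $a_\cM c_\cM=2$, the number of pairs with given $t$ is at most $48N_{\max}(R)$.
\item A local Bruhat--Tits count gives $N_{\max}(R)\le 6\tau(4mn-t^2)$ whenever one degree is squarefree, since this controls the local conductor.
\item Summing over $t$ and $m\le n$ yields $\sum_{m\le n}C_p(m,n)\ll n^2\log(2n)$, uniformly up to the Minkowski range, with no reduction theory needed.
\end{enumerate}

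Finally, your Cauchy--Schwarz step is unnecessary. The single inequality $\mathbf 1_{\{P^+(\delta_E)\le B\}}\ge S_E-P_E$ handles the union and the minimality at once. Here $P_E$ counts ordered pairs consisting of a selected $z$ and any primitive connection of degree $\le X$ preceding it in a fixed ordering.
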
 
	We also prove that the predicted logarithmic exponent of the heuristic of Mamah is correct and demonstrate that the proposed universal leading constant does not hold in general.
	
	We briefly mention related studies in this subject. 
	Charles, Goren, and Lauter \cite{CGL09} studied short paths
	between Frobenius-conjugate vertices in supersingular
	$\ell$-isogeny graphs and obtained bounds on the number of
	vertices admitting such paths. 

	Aubry and Vincent related such
	isogenies to inseparable endomorphisms and rank-two sublattices of the
	Gross lattice \cite{AubryVincent2026}, while Aubry, Oyono, and Vincent proved the bound
	\[
	\delta_E
	\le
	\left\lfloor (p/2)^{1/3}\right\rfloor
	\]
	and showed that the exponent $1/3$ is asymptotically optimal
	\cite{AubryOyonoVincent2026}. Complementary recent work studies the extremal
	value $\max_E\delta_E$ \cite{Swa26} and the analogous problem for
	principally polarized superspecial abelian surfaces \cite{Pha26}.
	
	The use of isogenies to a Frobenius conjugate for constructing
	non-scalar endomorphisms appears in
	\cite{EHL+20,FIK+25}.
	Building on the bound of \cite{AubryOyonoVincent2026}, Wesolowski proposed his heuristic, which was then supported by the experiments and
	implementations of \cite{Pan26,AOV26b}. An independent preprint by Delgado gives an unconditional
	$p^{1/3+o(1)}$ algorithm by fixing in advance a family of smooth degrees and showing that isogenies of those degrees occur on
	sufficiently many curves \cite{Del26}.

	The paper is organized as follows. In \Cref{sec:notation} we introduce some necessary background about smooth numbers, class numbers of imaginary quadratic orders and divisor functions. We then study the Brandt operators and primitive connection isogenies about supersingular elliptic curves in \Cref{subsec:SSEC}. \Cref{sec:upper} and \Cref{sec:lower} are devoted to the proof of the  upper and lower bounds respectively via number-theoretic arguments, and \Cref{sec:lower} is the technical heart of this paper.
	We then deduce the heuristic of Wesolowski 
	in \Cref{sec:Impact}. Appendix~\ref{ap:Mhuristic} proves that Mamah’s heuristic holds in the logarithmic sense, giving the existence probability of a connection isogeny between $E$ and $E^{(p)}$ of degree $n$. Appendix~\ref{ap:proofoflemmas} contains detailed proofs of the technical lemmas in \Cref{sec:notation}.
	\subsection*{Acknowledgement} 
	This wok is partially supported by NSFC (Grant No. 12371013 and  12501731) and  QNMP (Grant No. 2021ZD0302902). 
	
	\subsection*{Declaration of AI use} The authors used ChatGPT-5.6 Sol to assist in developing mathematical arguments and derivations presented in this paper. All AI-assisted mathematical results and proofs have been independently verified by the authors, who take full responsibility for the accuracy and integrity of the paper.

	\section{Notation and preliminaries}\label{sec:notation}

	\subsection{Notations and conventions}
	In this work, we assume $p\geq5$ is prime and $k=\overline{\F}_p$. All curves, isogenies, and homomorphisms are geometric unless a field of definition is explicitly specified. We set
	\begin{equation*}
		N=\floor{(p/2)^{1/3}},
		\qquad
		u=\frac{\log (\frac{p}{2})}{3\log B}.
	\end{equation*}

	We let $P^+(1)=1$ and $P^+(n)$ be the largest prime factor of integer $n >1$. Hence $n$ is $B$-smooth is exactly the condition $P^+(n)\leq B$. 
	
	For real functions $f(x)$ and $g(x)$, we denote by \(f=O(g)\) (respectively \(f=O_{\varepsilon}(g)\)) if there exists an absolute constant \(C>0\) (respectively  $C$ depending on $\eps$)  such that \(f \leq Cg\), by \(f\asymp g\) if $f=O(g)$ and $g=O(f)$.

	All probabilities and expectations below refer to the uniform probability measure on the set $\Sp$ of isomorphism classes of supersingular elliptic curves over $k$.
	
	The proofs of the lemmas in this section are deferred to
	Appendix~\ref{ap:proofoflemmas}.
	
	\subsection{Counting smooth numbers}
	\label{subsec:smooth-numbers}
	
	For real numbers \(x\geq 1\)
	and \(y\geq 2\), define
	\[
	\Psi(x,y)
	:=
	\#\left\{
	n\in\Z_{\ge 1}:
	n\leq x,\ P^+(n)\leq y
	\right\}.
	\]
	In other words, \(\Psi(x,y)\) counts \(y\)-smooth positive integers \(\le x\).

	The Dickman-de Bruijn function \(\rho\colon[0,\infty)\to\mathbb{R}\) is the unique continuous function, continuously differentiable on \((1,\infty)\), satisfying the delay differential equation
	
	\begin{equation}\label{eq:dickman-debruijn}
		\begin{cases}
			\rho(v)=1, & 0\leq v\leq 1,\\
			v\rho'(v)+\rho(v-1)=0, & v>1.
		\end{cases}
	\end{equation}
	It is positive and non increasing on \([0,\infty)\).
	
	For every fixed \(v\geq 1\), Dickman's theorem\cite{Dickman1930} (see also
	\cite[(1.1)--(1.3)]{HildebrandTenenbaum1986}.) gives
	\begin{equation}\label{eq:dickman-fixed-v}
		\lim_{x\to\infty}
		\frac{\Psi\left(x,x^{1/v}\right)}{x}
		=
		\rho(v).
	\end{equation}
	Moreover, de Bruijn\cite{deBruijn1951Asymptotic} proved that
	\begin{equation}\label{eq:dickman-asymptotic}
		\lim_{v\to\infty}
		\frac{-\log\rho(v)}{v\log v}
		=1.
	\end{equation}
	Equivalently,
	\(\rho(v)=v^{-v(1+o(1))}\).
	
	\begin{lemma}\label{lem:Psi-uniform}   
		Let $N_0=\left(\frac p2\right)^{1/3}$, $N=\lfloor N_0\rfloor$ and $u=\frac{\log N_0}{\log B}$. Fix $0<\eps<1/2$ and assume $(\log p)^\eps<u<(\log p)^{1-\eps}$. When $p\to \infty$, we have 
		\begin{equation}\label{eq:Psi-rho}
			\frac{\Psi(N,B)}N
			=\rho(u)\left\{1+O_\eps\left(
			\frac{\log(u+1)}{\log B}+\frac1N
			\right)\right\}.
		\end{equation}
		In particular, $\frac{\Psi(N,B)}{N}=\rho(u)(1+o_\eps(1))$. 
	\end{lemma}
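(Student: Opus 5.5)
The plan is to invoke Hildebrand's uniform estimate for $\Psi(x,y)$ and then account for the rounding $N=\lfloor N_0\rfloor$. Hildebrand's theorem \cite{HildebrandTenenbaum1986} states that for any fixed $\eta>0$,
\[
\Psi(x,y)=x\rho(v)\left\{1+O_\eta\left(\frac{\log(v+1)}{\log y}\right)\right\},\qquad v=\frac{\log x}{\log y},
\]
uniformly in the range $x\ge 3$, $\exp((\log\log x)^{5/3+\eta})\le y\le x$. We will apply this at $x=N_0$ (or at $x=N$) with $y=B$.

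\textbf{Step 1: the range condition.} We need to check that $B$ lies in Hildebrand's range. Since $u<(\log p)^{1-\eps}$, we get $\log B=\log N_0/u\gg(\log p)^{\eps}$. On the other hand $(\log\log N_0)^{5/3+\eta}=O((\log\log p)^{2})$, which is $o((\log p)^\eps)$. So for $p$ large (depending on $\eps$) the range condition holds. Also $u>(\log p)^\eps>1$ gives $B\le N_0$.

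\textbf{Step 2: pass from $N_0$ to $N$.} Since $\Psi(x,B)$ only changes at integers, $\Psi(N,B)=\Psi(N_0,B)$. Hence
\[
\frac{\Psi(N,B)}{N}=\frac{N_0}{N}\cdot\frac{\Psi(N_0,B)}{N_0},
\qquad \frac{N_0}{N}=1+O(1/N).
\]
Combining this with Step 1 gives \eqref{eq:Psi-rho} with exactly $v=u$, with no need to compare $\rho$ at nearby arguments. (If one instead applies Hildebrand at $x=N$, the argument shifts to $u'=\log N/\log B=u+O(1/(N\log B))$. One then needs $\rho(u')/\rho(u)=1+O(\log(u+1)/(N\log B))$, which follows from $-\rho'(v)/\rho(v)\ll\log(v+1)$; the first route avoids this.)

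\textbf{Step 3: the ``in particular''.} The error terms tend to zero: $\log(u+1)/\log B\ll \log\log p/(\log p)^{\eps}\to0$ and $1/N\to0$.

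The only real obstacle is making sure the hypothesis $u<(\log p)^{1-\eps}$ places $B$ inside Hildebrand's range; Step 1 shows it does, with ample room. The lower bound $u>(\log p)^\eps$ is not needed beyond guaranteeing $u\ge1$.
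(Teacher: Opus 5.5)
Your proposal is correct and follows the paper's proof essentially step for step. You apply Hildebrand's uniform estimate at $x=N_0$, $y=B$ (the paper takes $\eta=\eps/2$), check the range condition via $\log B=\log N_0/u\gg(\log p)^{\eps}$, and pass to $N$ using $\Psi(N,B)=\Psi(N_0,B)$ and $N_0/N=1+O(1/N)$, so that $\rho$ is evaluated exactly at $u$.
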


	We define the set of squarefree $B$-smooth integers in $[\frac{X}{2},X]$ by
	\begin{equation}\label{eq:shell-def}
		\cA(X,B)=
		\set{n\in\Z_{\geq1}:X/2<n\leq X,\ \mu(n)^2=1,\ \Pplus(n)\leq B},
	\end{equation}
	where $\mu(n)$ is the M\"obius function.
	
	\begin{lemma}\label{lem:squarefree-shell}
		Suppose $(\log X)^3 \le B \le X$ and $\log B \ge (\log\log X)^{5/3+\delta}$ for some fixed $\delta>0$.
		Put $v=\frac{\log X}{\log B}$. If $v\to\infty$, then
		\begin{equation}\label{eq:shell-cardinality}
			\#\cA(X,B)\asymp X\rho(v).
		\end{equation}
	\end{lemma}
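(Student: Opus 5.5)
The plan is to compare squarefree smooth numbers with all smooth numbers in the dyadic shell $(X/2,X]$, using standard Hildebrand--Tenenbaum estimates for $\Psi(x,y)$.

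\emph{Upper bound.} This is immediate from inclusion:
\[
\#\cA(X,B)\le \Psi(X,B)-\Psi(X/2,B)\le \Psi(X,B).
\]
Under the hypothesis $\log B\ge(\log\log X)^{5/3+\delta}$, Hildebrand's theorem gives $\Psi(X,B)=X\rho(v)(1+o(1))$ uniformly; this is the same input used in \Cref{lem:Psi-uniform}. Hence $\#\cA(X,B)\ll X\rho(v)$.

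\emph{Lower bound.} I proceed in three steps.

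\begin{enumerate}[label=(\roman*)]
\item \emph{Smooth numbers in the shell.} Hildebrand's asymptotic together with the local estimate $\rho(v-\log 2/\log B)\asymp \rho(v)$ gives
\[
\Psi(X,B)-\Psi(X/2,B)\gg X\rho(v).
\]
Equivalently, one can use $\Psi(X/2,B)\le 2^{-\alpha}\Psi(X,B)(1+o(1))$, where $\alpha=\alpha(X,B)$ is the saddle point. Since $B\ge(\log X)^3$, we have $\alpha\ge 1-\log(v\log v)/\log B+o(1)\ge 2/3-o(1)$. So $2^{-\alpha}\le 2^{-1/2}<1$, and the shell carries a positive proportion of $\Psi(X,B)$.

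\item \emph{Removing non-squarefree integers.} Write $m(n)$ for the number of $B$-smooth $n\in(X/2,X]$ that are divisible by $d^2$ for some prime $d\le B$. It suffices to bound
\[
\sum_{q\le B} \#\{n\le X/q^2 : P^+(n)\le B\}=\sum_{q\le B}\Psi(X/q^2,B).
\]
Here I apply the Hildebrand--Tenenbaum inequality $\Psi(X/d,B)\ll d^{-\alpha}\Psi(X,B)$, valid uniformly for $d\le X$ in this range. This gives
\[
\sum_{q}\Psi(X/q^2,B)\ll \Psi(X,B)\sum_q q^{-2\alpha}.
\]
Since $2\alpha\ge 4/3-o(1)>1$, the prime sum converges. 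However, its value $\sum_q q^{-2\alpha}$ is only bounded, not small, so this step does not yet finish the argument.

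\item \emph{Restricting to a subset where the tail is small.} To make the prime sum small I restrict to $n$ coprime to all primes $q\le Q$ for a large fixed $Q$. Let $\Psi_Q$ count $B$-smooth numbers in the shell free of primes $\le Q$. Comparing Euler products at the saddle point gives
\[
\Psi_Q\asymp_Q \prod_{q\le Q}(1-q^{-\alpha})\,\bigl(\Psi(X,B)-\Psi(X/2,B)\bigr),
\]
for example via de la Bretèche--Tenenbaum local estimates for $\Psi_q(x,y)$ with $q$ fixed. Within this restricted set, the integers divisible by $q^2$ for some $q>Q$ number at most
\[
\ll \Psi(X,B)\sum_{q>Q} q^{-2\alpha}\ll Q^{1-2\alpha}\,\Psi(X,B).
\]
Choosing $Q$ large but fixed makes this smaller than half of $\Psi_Q$. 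The factor $\prod_{q\le Q}(1-q^{-\alpha})$ is a positive constant because $\alpha\ge 2/3$, so what remains is $\gg X\rho(v)$ squarefree $B$-smooth integers in the shell.
\end{enumerate}

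I expect the main obstacle to be step (iii): justifying uniformly in this range that sieving out the small primes $\le Q$ costs only a constant factor in the shell. I would first try Rankin's trick together with the smooth-number analogue of the fundamental lemma. A simpler alternative is to multiply: take $n=a\cdot b$ with $b$ a fixed squarefree product of primes in $(Q,2Q]$, and observe directly that the squarefree part loses only a constant.
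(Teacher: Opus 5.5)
Your upper bound is fine, and so are steps (i)--(ii). In (i), only the $2^{-\alpha}$ version works: $\rho(v-\log 2/\log B)\asymp\rho(v)$ alone does not make $\Psi(X,B)-\Psi(X/2,B)$ positive, since you need the ratio to be $<2$.

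\textbf{The gap is the final comparison in step (iii).}
\begin{itemize}
\item Your tail bound $C\,\Psi(X,B)\sum_{q>Q}q^{-2\alpha}$ ignores the coprimality restriction and carries an absolute constant $C$.
\item The main term is $g_Q(\alpha)\bigl(\Psi(X,B)-\Psi(X/2,B)\bigr)$ with $g_Q(\alpha)=\prod_{q\le Q}(1-q^{-\alpha})$, and $g_Q(\alpha)\to 0$ as $Q\to\infty$.
\item So whether a fixed $Q$ exists depends on how close $\alpha$ is to $1$. At $\alpha=1$ you compare $1/(Q\log Q)$ with $1/\log Q$, and you win.
\item At $\alpha=2/3$, the only bound you actually prove, you lose. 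Here $g_Q(2/3)=\exp\bigl(-(3+o(1))Q^{1/3}/\log Q\bigr)$ decays far faster than the tail, which is $\asymp Q^{-1/3}/\log Q$. For small $Q$ the tail $\sum_{q>Q}q^{-4/3}$ is not small either (about $0.85$ at $Q=2$), so no fixed $Q$ works.
\end{itemize}
Hence ``positive constant because $\alpha\ge 2/3$'' does not justify the choice of $Q$. A second problem: writing $\Psi_Q\asymp_Q\cdots$ with unspecified $Q$-dependent constants makes the comparison vacuous. You would need the asymptotic $\Psi_q(x,y)=g_q(\alpha)\Psi(x,y)\{1+o(1)\}$, or at least a lower constant uniform in $Q$.

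\textbf{The missing observation, and the fix.} The hypothesis $\log B\ge(\log\log X)^{5/3+\delta}$ forces
$1-\alpha\ll\log(v\log v)/\log B\ll(\log\log X)^{-2/3-\delta}\to 0$.
With $\alpha\to 1$, step (iii) is unnecessary. Use the Hildebrand--Tenenbaum local estimate in its asymptotic form, $\Psi(X/d,B)=d^{-\alpha}\Psi(X,B)\{1+o(1)\}$ uniformly for $1\le d\le B$. Then:
\begin{itemize}
\item the shell contains $(\tfrac12+o(1))\Psi(X,B)$ smooth integers;
\item those divisible by $q^2$ with $q\le\sqrt{B/2}$ number at most $(1-2^{-\alpha})\Psi(X,B)\sum_q q^{-2\alpha}+o(\Psi(X,B))=(\tfrac12\sum_q q^{-2}+o(1))\Psi(X,B)$, and $\tfrac12\sum_q q^{-2}<0.23$;
\item primes $q>\sqrt{B/2}$ contribute $\ll\Psi(X,B)\sum_{q>\sqrt{B/2}}q^{-2\alpha}=o(\Psi(X,B))$, by your inequality.
\end{itemize}
This gives $\#\cA(X,B)\gg\Psi(X,B)\asymp X\rho(v)$. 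Your step (iii) would also go through once $\alpha\to1$ is used together with the asymptotic for $\Psi_q$, but it is not needed.

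\textbf{Comparison with the paper.} The paper argues more directly. It quotes the Ivi\'c--Tenenbaum asymptotic $\Psi_1(x,B;\mu^2)=\zeta(2\beta)^{-1}\Psi(x,B)\{1+o(1)\}$, with $\beta\to 1$, at $x=X$ and $x=X/2$. It combines this with $\Psi(X/2,B)/\Psi(X,B)=2^{-\alpha}\{1+o(1)\}$, $\alpha\to1$, and obtains the sharper $\#\cA(X,B)=\bigl(\frac{1}{2\zeta(2)}+o(1)\bigr)\Psi(X,B)$. The repaired union-bound route gives only the order of magnitude, but it needs nothing beyond local estimates for $\Psi$ itself.
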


	\subsection{Class number formula of imaginary quadratic orders}
	\label{subsec:kronecker-characters}
	
	For a negative quadratic discriminant
	\(D\equiv 0,1\pmod 4\),  let
	\(
	K_D:=\Q(\sqrt{D})\) and \(\cO_D
	:=
	\Z\left[\frac{D+\sqrt{D}}{2}\right]
	\)
	be the imaginary quadratic order of discriminant \(D\). Let
	\[ h(D):=\#\Pic(\cO_D) \]
	be the class number of $\cO_D$  and  
	\[ w(D) :=[\cO_D^\times:\{\pm1\}]
	= \begin{cases}
		3, & D=-3,\\
		2, & D=-4,\\
		1, & D<-4.
	\end{cases} \]

	Any such discriminant $D$ admits a unique factorization
	\begin{equation}\label{eq:discriminant-conductor-decomp}
		D=f^2D_0,
	\end{equation}
	where \(D_0\) is the fundamental discriminant of \(K_D\) and \(f\ge 1\) is the conductor of \(\cO_D\). Equivalently,
	\[
	\cO_D
	=
	\Z+f\cO_{D_0},
	\]
	where \(\cO_{D_0}\) denotes the maximal order of \(K_D\).
	
	We have the following class-number formula, see \cite[Theorem~7.24 and the discussion following Corollary~7.28]{Cox2013} for details.
	\begin{proposition}
		\label{prop:quadratic-order-class-number}
		Let \(D<0\) be an order discriminant. Then
		\begin{equation}\label{eq:quadratic-order-class-number}
			\frac{h(D)}{w(D)}
			=
			\frac{\sqrt{|D|}}{\pi}
			L(1,\chi_D).
		\end{equation}
		where $L(s,\chi_D)$ is the 	Dirichlet \(L\)-function associated to the Kronecker character 
		\[
		\chi_D\colon\Z_{\ge 1}\to\{-1,0,1\}, \ m\mapsto \left(\frac{D}{m}\right), 
		\]
		with $\left(\frac{D}{m}\right)$  the Jacobi symbol modulo $m$. Equivalently, by \(\#\cO_D^\times = 2w(D)\), 
		\begin{equation}L(1,\chi_D)
			= \frac{2\pi h(D)}
			{\#\cO_D^\times\sqrt{|D|}}.
		\end{equation}
	\end{proposition}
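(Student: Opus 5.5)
The statement to prove is \Cref{prop:quadratic-order-class-number}, the class number formula for an arbitrary imaginary quadratic order. The plan is to reduce it to the fundamental case and then compare the two sides under the conductor change \(D_0\mapsto D=f^2D_0\).

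\textbf{Step 1: the maximal order.} For the maximal order \(\cO_{D_0}\), use Dirichlet's analytic class number formula in the form
\[
\frac{2\pi h(D_0)}{\#\cO_{D_0}^\times\sqrt{|D_0|}} = L(1,\chi_{D_0}).
\]
This follows from the residue at \(s=1\) of the Dedekind zeta function, \(\zeta_{K}(s)=\zeta(s)L(s,\chi_{D_0})\), combined with the lattice-point count of ideals in each class; I would cite it as standard.

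\textbf{Step 2: the conductor on the class-number side.} Invoke the relative class number formula (Cox, Theorem~7.24):
\[
h(D)=\frac{h(D_0)\,f}{[\cO_{D_0}^\times:\cO_D^\times]}\prod_{\ell\mid f}\Bigl(1-\chi_{D_0}(\ell)\ell^{-1}\Bigr).
\]
This comes from the exact sequence
\[
1\to \cO_D^\times\to\cO_{D_0}^\times\to(\cO_{D_0}/f)^\times/(\Z/f)^\times\to\Pic(\cO_D)\to\Pic(\cO_{D_0})\to1,
\]
together with the count \(\#(\cO_{D_0}/f)^\times/(\Z/f)^\times=f\prod_{\ell\mid f}(1-\chi_{D_0}(\ell)/\ell)\). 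Dividing by \(\#\cO_D^\times\) gives
\[
\frac{h(D)}{\#\cO_D^\times}=\frac{h(D_0)}{\#\cO_{D_0}^\times}\,f\prod_{\ell\mid f}\bigl(1-\chi_{D_0}(\ell)/\ell\bigr).
\]

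\textbf{Step 3: the conductor on the \(L\)-function side.} The Kronecker symbol satisfies \(\chi_D(m)=\chi_{D_0}(m)\) when \(\gcd(m,f)=1\) and \(\chi_D(m)=0\) otherwise. So \(\chi_D\) is \(\chi_{D_0}\) with the Euler factors at primes \(\ell\mid f\) removed, and
\[
L(1,\chi_D)=L(1,\chi_{D_0})\prod_{\ell\mid f}\bigl(1-\chi_{D_0}(\ell)\ell^{-1}\bigr).
\]
The value at \(s=1\) is justified by taking limits from \(s>1\), using conditional convergence of the series at \(s=1\) for a nonprincipal character. Note that \(\chi_D\) is nonprincipal because \(D\) is not a square.

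\textbf{Step 4: combine.} Using \(\sqrt{|D|}=f\sqrt{|D_0|}\) together with Steps 1--3 yields
\[
L(1,\chi_D)=\frac{2\pi h(D)}{\#\cO_D^\times\sqrt{|D|}}.
\]
Since \(\#\cO_D^\times=2w(D)\), this is equivalent to \eqref{eq:quadratic-order-class-number}.

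The main obstacle is Step 3, specifically the identification of \(\chi_D\) with the imprimitive character induced by \(\chi_{D_0}\) modulo \(f\). For primes \(m\nmid 2\) this is immediate from the Jacobi symbol: \((f^2D_0/m)=(D_0/m)\) when \(m\nmid f\). At \(m=2\) one must check the Kronecker convention separately. If \(2\nmid f\), the value of \(\chi_D(2)\) is determined by \(D\bmod 8\), and \(f^2\equiv1\pmod 8\) gives \(D\equiv D_0\pmod 8\). If \(2\mid f\), then \(4\mid D\) and \(\chi_D(2)=0\). This case check is the only delicate point.
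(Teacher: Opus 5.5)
Your proposal is correct and is essentially the paper's route: the paper gives no argument of its own and simply cites Cox (Theorem~7.24 and the discussion after Corollary~7.28). That reference is exactly your reduction, namely Dirichlet's formula for $\cO_{D_0}$, the relative class number formula with the unit index, and the Euler-factor comparison $L(1,\chi_D)=L(1,\chi_{D_0})\prod_{\ell\mid f}(1-\chi_{D_0}(\ell)/\ell)$. Your use of the Kronecker convention for $\chi_D(2)$ is also the right reading of the paper's ``Jacobi symbol modulo $m$'' definition, which is literally undefined for even $m$.
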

	Also, we have the well-known upper bound for the Dirichlet L-function. This is a standard exercise; see \cite{davenport2000multiplicative}.
	\begin{lemma}\label{lem:L-upper}
		If $\chi$ is a nonprincipal Dirichlet character modulo $q$, then
		\[
		|L(1,\chi)|\leq\log q+2.
		\]
	\end{lemma}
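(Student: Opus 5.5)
We need to prove that $|L(1,\chi)|\le \log q+2$ for a nonprincipal character $\chi$ modulo $q$. The plan is to bound the Dirichlet series $L(1,\chi)=\sum_{n\ge1}\chi(n)/n$ directly. We split it at a cutoff $n\le q$ and handle the tail by partial summation, using that character sums over complete periods vanish.

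\emph{Setup.} Set $S(t)=\sum_{n\le t}\chi(n)$. Because $\chi$ is nonprincipal, $\sum_{n=a+1}^{a+q}\chi(n)=0$ for every integer $a$. Hence $S$ is periodic of period $q$. Within one period, $S(t)$ is a partial sum of at most $q$ terms, at most $\varphi(q)$ of which are nonzero, each of absolute value $1$. So $|S(t)|\le \varphi(q)\le q$ for all $t$. This also shows that the series $\sum\chi(n)/n$ converges, with value $L(1,\chi)$.

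\emph{The split.} Write
\[
L(1,\chi)=\sum_{n\le q}\frac{\chi(n)}{n}+\sum_{n>q}\frac{\chi(n)}{n}.
\]
The first sum is at most $\sum_{n\le q}1/n\le 1+\log q$ in absolute value.

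\emph{The tail.} By Abel summation,
\[
\sum_{n>q}\frac{\chi(n)}{n}=-\frac{S(q)}{q}+\int_q^\infty\frac{S(t)}{t^2}\,dt .
\]
Here $S(q)=0$, since $[1,q]$ is a full period. Using $|S(t)|\le q$, the integral is at most $q\int_q^\infty t^{-2}\,dt=1$ in absolute value. Adding the two pieces gives $|L(1,\chi)|\le \log q+2$.

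A sharper cutoff, for instance with the bound $|S|\le\varphi(q)$ or a Pólya–Vinogradov-type estimate, is not needed for this statement. The only point to check is that the elementary bound $|S(t)|\le q$ holds uniformly in $t$, which is exactly the periodicity argument in the setup. So no step is a genuine obstacle.
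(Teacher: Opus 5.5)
Your proof is correct: splitting at $n\le q$, bounding the head by $\sum_{n\le q}1/n\le 1+\log q$, and bounding the tail by partial summation with $S(q)=0$ and $|S(t)|\le q$ gives exactly $\log q+2$. The paper does not prove this lemma itself but calls it a standard exercise and cites Davenport, and your argument is that standard proof.
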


	\begin{lemma}\label{lem:tatuzawa}
		Let \(p\) be an odd prime and \(2<X<p/4\). For
		\(n\in\mathcal A(X,B)\), define
		\begin{equation}\label{eq:Dn-def}
			D_n=
			\begin{cases}
				-pn, & pn\equiv3\pmod4,\\
				-4pn, & \text{otherwise}.
			\end{cases}
		\end{equation}
		Then \(D_n\) is the fundamental discriminant of
		\(\mathbb Q(\sqrt{-pn})\), and the map \(n\mapsto D_n\) is injective
		on \(\mathcal A(X,B)\).
		
		For sufficiently large \(p\),  except at most one
		\(n\in\mathcal A(X,B)\), we have
		\begin{equation}\label{eq:tatuzawa-primitive}
			L(1,\chi_{D_n})
			>
			\frac{0.655e^{-2}}{\log p}.
		\end{equation}
		Moreover, if \(pn\equiv3\pmod4\), then the character
		\(\chi_{4D_n}=\chi_{-4pn}\) satisfies
		\begin{equation}\label{eq:tatuzawa-imprimitive}
			L(1,\chi_{-4pn})
			>
			\frac{0.655e^{-2}}{2\log p}.
		\end{equation}
	\end{lemma}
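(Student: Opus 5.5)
The plan is to check the arithmetic claims about $D_n$ directly, then apply Tatuzawa's effective form of Siegel's theorem with $\varepsilon=1/\log p$, and finally obtain the imprimitive bound by removing an Euler factor at $2$.

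\emph{Fundamentality and injectivity.} Let $n\in\mathcal A(X,B)$. Then $n$ is squarefree and $n\le X<p/4<p$, so $p\nmid n$ and $pn$ is squarefree.
\begin{itemize}
\item If $pn\equiv 3\pmod 4$, then $-pn\equiv 1\pmod 4$ is a squarefree integer congruent to $1$ mod $4$, hence a fundamental discriminant.
\item Otherwise $pn\equiv 1$ or $2\pmod 4$, since $pn$ is squarefree. Then $-pn\equiv 3$ or $2\pmod 4$, and $-4pn$ is the fundamental discriminant of $\mathbb Q(\sqrt{-pn})$.
\end{itemize}
In both cases $D_n$ is the discriminant of $\mathbb Q(\sqrt{-pn})$.

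For injectivity, suppose $D_n=D_m$. If both are defined by the same case, then $n=m$ immediately. If the cases differ, say $-pn=-4pm$, then $n=4m$, which is impossible because $n$ is squarefree. Hence $n\mapsto D_n$ is injective.

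\emph{Applying Tatuzawa.} Tatuzawa's theorem states the following. Let $0<\varepsilon<1/11.2$. Then for every fundamental discriminant $D$ with $|D|\ge e^{1/\varepsilon}$ we have
\[
L(1,\chi_D)>0.655\,\varepsilon\,|D|^{-\varepsilon},
\]
with at most one exceptional $D$.

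Take $\varepsilon=1/\log p$; this is admissible once $\log p>11.2$. Check the two size conditions:
\begin{itemize}
\item Lower bound: $|D_n|\ge pn\ge p=e^{1/\varepsilon}$, so the theorem applies.
\item Upper bound: $|D_n|\le 4pn\le 4pX<p^2$, so $|D_n|^{-\varepsilon}>p^{-2/\log p}=e^{-2}$.
\end{itemize}
This gives \eqref{eq:tatuzawa-primitive} for every $n$ whose $D_n$ is not the exceptional discriminant. Since $n\mapsto D_n$ is injective, at most one $n\in\mathcal A(X,B)$ is excluded.

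\emph{The imprimitive character.} Now suppose $pn\equiv 3\pmod 4$, so $D_n=-pn$ is odd and $\chi_{-4pn}=\chi_{D_n}\cdot\mathbf 1_{2\nmid m}$. Comparing Euler products gives
\[
L(1,\chi_{-4pn})=\Bigl(1-\tfrac{\chi_{D_n}(2)}{2}\Bigr)L(1,\chi_{D_n})\ge \tfrac12\,L(1,\chi_{D_n}).
\]
This yields \eqref{eq:tatuzawa-imprimitive} for the same non-exceptional $n$.

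The only delicate point is choosing $\varepsilon$ so that Tatuzawa's conditions $|D|\ge e^{1/\varepsilon}$ and $\varepsilon<1/11.2$ hold simultaneously. The resulting factor $|D|^{-\varepsilon}$ is bounded below by a constant precisely because $|D_n|<p^2$, which uses $X<p/4$.
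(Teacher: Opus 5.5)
Your proof is correct and follows the paper's argument essentially step for step. Squarefreeness of $pn$ gives fundamentality and injectivity; Tatuzawa's theorem with $\varepsilon=1/\log p$ and $p\le|D_n|<p^2$ gives the bound outside one exceptional $n$; removing the Euler factor at $2$ gives the imprimitive bound (the only slip is cosmetic: you mean $\chi_{-4pn}(k)=\chi_{D_n}(k)\mathbf 1_{2\nmid k}$ for the argument $k$).
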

	
\subsection{Divisor-Sum Estimates}
\label{subsec:divisor-sum}

Let \(\tau(k)\) denote the number of positive divisors of
a positive integer \(k\). We record two divisor-sum estimates
that will be used in the subsequent sections.
	
		\begin{lemma}
		\label{lem:uniform-divisor-sum}
		For every integer $M\geq 1$, we have
		\begin{equation}\label{eq:uniform-divisor-sum}
			\sum_{\substack{t\in\mathbb Z\\t^2<4M}}
			\tau(4M-t^2)=O\left(\sqrt{M}\bigl(1+\log(2M)\bigr)^2\right),
		\end{equation}
		where the implied constant is absolute.
	\end{lemma}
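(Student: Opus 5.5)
The plan is to bound the sum by a crude pointwise divisor estimate on average. We write $n_t=4M-t^2$, which ranges over positive integers at most $4M$, with $t$ taking $O(\sqrt M)$ values. A pointwise bound $\tau(n)\ll n^{o(1)}$ would lose more than powers of $\log$, so instead we will use the elementary inequality
\[
\tau(n)\le 2\,\#\{d\mid n:\ d\le \sqrt n\},
\]
and swap the order of summation. This reduces the problem to counting, for each $d\le 2\sqrt M$, the number of $t$ with $t^2<4M$ and $t^2\equiv 4M\pmod d$.

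\textbf{Step 1 (swap).} We write
\[
\sum_{t^2<4M}\tau(4M-t^2)\le 2\sum_{d\le 2\sqrt M}\#\{t:\ |t|<2\sqrt M,\ t^2\equiv 4M \ (\mathrm{mod}\ d)\}.
\]
Since $|t|<2\sqrt M$ and $d\le 2\sqrt M$, the $t$-interval has length $\le 4\sqrt M$. It therefore contains at most $(4\sqrt M/d+1)\,\varrho(d)$ solutions, where $\varrho(d)=\#\{x\bmod d: x^2\equiv 4M\}$.

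\textbf{Step 2 (local count).} We bound $\varrho(d)$ multiplicatively. For odd $q=\ell^k$ with $\ell\nmid 4M$ there are at most $2$ roots. When $\ell\mid M$ the count can be as large as about $\ell^{k/2}$, roughly $\sqrt{\gcd(d,4M)}$-type growth. The clean bound we will use is
\[
\varrho(d)\le 2^{\omega(d)+1}\sqrt{\gcd(d,4M)}\ll \tau(d)\sqrt{\gcd(d,4M)}.
\]
This is the main obstacle: the gcd factor must be controlled on average rather than pointwise.

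\textbf{Step 3 (summing).} The total is
\[
\ll \sqrt M\sum_{d\le 2\sqrt M}\frac{\tau(d)\sqrt{\gcd(d,4M)}}{d}+\sum_{d\le 2\sqrt M}\tau(d)\sqrt{\gcd(d,4M)}.
\]
To handle the gcd, we write $g=\gcd(d,4M)$ and $d=ge$. Then $\sqrt g\,\tau(g)\tau(e)/(ge)$ summed over $g\mid 4M$ gives $\sum_{g\mid 4M}\tau(g)g^{-1/2}$. However, this is $\prod_{\ell\mid 4M}(1+O(\ell^{-1/2}))$, which is not $O(1)$ uniformly, so we must be more careful.

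A better choice is to avoid the gcd loss entirely by splitting $t^2\equiv 4M \pmod d$ differently. We write $4M-t^2=dd'$ with $d\le d'$ and count lattice points: equivalently, we count representations $4M=t^2+dd'$, which are representations of $4M$ by the binary form $t^2+dd'$. Summing over $d$ then gives an essentially Hurwitz-class-number-type sum, $\sum_{t}\tau(4M-t^2)\ll \sum_{|D|\le 4M}$. This approach is heavier.

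So we will stay with Step 3 but bound crudely. Using $\sqrt{\gcd(d,4M)}\le \sum_{g\mid \gcd(d,4M)}\sqrt g$, the sum becomes
\[
\sqrt M\sum_{g\mid 4M}\frac{\tau(g)}{\sqrt g}\sum_{e\le 2\sqrt M/g}\frac{\tau(e)}{e}\ll \sqrt M(\log 2M)^2\sum_{g\mid 4M}\tau(g)g^{-1/2}.
\]
The second term is similar, giving $\sqrt M\log M\sum_{g}\tau(g)g^{-1/2}$.

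Finally, we need $\sum_{g\mid 4M}\tau(g)g^{-1/2}=O(1)$. This fails in general: it is $\prod_{\ell\mid M}(1+2\ell^{-1/2}+\dots)$. We will therefore refine Step 2 to the sharp statement that, for $\ell^{a}\,\|\, 4M$ and $q=\ell^k$, $\varrho(q)\le 2\ell^{\min(a,k)/2}$ only when $\min(a,k)$ is even and the residue is a square. In addition, when $k>a$ the roots are confined to $x\equiv0 \pmod{\ell^{\lceil a/2\rceil}}$ and the count is actually $\le 2\ell^{a/2}$ independent of $k$. Summing $\sum_k \varrho(\ell^k)/\ell^k$ over prime powers then yields local factors $1+O(1/\ell)$ after accounting for the $1/\ell^k$ decay. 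The product over $\ell\mid 4M$ of $(1+O(1/\ell))$ is $O(\log\log M)$, which is absorbed, with room to spare, into one power of $\log(2M)$ once we use $\sum_{e}\tau(e)/e\ll\log$ only once. Verifying these local factors carefully is where we expect the real work to lie.
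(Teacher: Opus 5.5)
Your Step~1 is exactly the paper's reduction. The idea you reach in your last paragraph, bounding $\sum_{d\le 2\sqrt M}\varrho(d)/d$ by an Euler product after computing $\sum_{k}\varrho(\ell^k)\ell^{-k}$ at the primes $\ell\mid 4M$, is also the paper's route. The gap is in how you close it.

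You aim only for local factors $1+O(1/\ell)$ at $\ell\mid 4M$. You then assert that the resulting $\prod_{\ell\mid 4M}(1+O(1/\ell))$ is absorbed into $(\log 2M)^2$ ``once we use $\sum_e\tau(e)/e\ll\log$ only once.'' That estimate is false: $\sum_{e\le x}\tau(e)/e\asymp(\log x)^2$, as you yourself used in Step~3, so there is no spare logarithm. (Also, $\prod_{\ell\mid 4M}(1+C/\ell)$ is of size $(\log\log M)^{C}$ when $M$ is a primorial, not $O(\log\log M)$ unless $C\le 1$.)

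An extra factor from the primes dividing $4M$ is harmless only if it is compensated by removing those primes from the generic part. There each prime contributes $(1-\ell^{-1})^{-2}=1+2/\ell+O(\ell^{-2})$. So a local factor $1+C/\ell$ at $\ell\mid 4M$ is acceptable only if $C\le 2$. With an unspecified $O(1/\ell)$ you are left with a loss of order $(\log\log M)^{C-2}$, which is fatal for a bound with an absolute constant.

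What is missing is the sharp local bound
\[
1+\sum_{k\ge1}\frac{\varrho(\ell^k)}{\ell^k}\le 1+\frac{2}{\ell-1}\le\bigl(1-\ell^{-1}\bigr)^{-2}
\]
at \emph{every} prime $\ell$, including $\ell\mid 4M$ and $\ell=2$. With it you never need to separate out the primes dividing $4M$: Mertens applied to the full product over $\ell\le 2\sqrt M$ gives $(\log M)^2$ directly. The paper obtains this bound by exact computation, as follows.

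Write $4M=\ell^a a'$ with $\ell\nmid a'$.
\begin{itemize}
\item For $k\le a$ one has $\varrho(\ell^k)=\ell^{\lfloor k/2\rfloor}$, so in particular $\varrho(\ell)=1$. Your interim bound $2\ell^{\min(a,k)/2}$ would instead contribute $2\ell^{-1/2}$ at $k=1$. That reintroduces exactly the $\sum_{g\mid 4M}\tau(g)g^{-1/2}$ loss you were trying to avoid.
\item For $k>a$ with $a$ odd, $\varrho(\ell^k)=0$.
\item For $a=2b$ and $r\ge 1$, $\varrho(\ell^{2b+r})=\ell^{b}\varrho_{a'}(\ell^r)$, where $\varrho_{a'}(\ell^r)$ counts square roots of $a'$ modulo $\ell^r$. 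One has $\sum_{r\ge1}\varrho_{a'}(\ell^r)\ell^{-r}\le 2/(\ell-1)$, by Hensel for odd $\ell$, and for $\ell=2$ by $\varrho_{a'}(2^r)\le 1,2,4$ for $r=1$, $r=2$, $r\ge3$.
\end{itemize}
These pieces telescope. For $a=2b$ the total is at most $2\sum_{h=1}^{b}\ell^{-h}+\ell^{-b}\cdot\frac{2}{\ell-1}=\frac{2}{\ell-1}$. For $a=2b+1$ it is $2\sum_{h=1}^{b}\ell^{-h}+\ell^{-b-1}\le\frac{2}{\ell-1}$.

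This exact bookkeeping is what makes the bound uniform in $M$. You flagged it as ``the real work'' but did not carry it out, and you aimed it at $O(1/\ell)$ rather than at a constant of at most $2$.
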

	
	\begin{lemma}\label{lem:joint-divisor-sum}
		For every squarefree integer \(n\geq1\),
		\begin{equation}\label{eq:joint-divisor-sum}
			\sum_{m\leq n}
			\sum_{\substack{t\in\Z\\t^2<4mn}}
			\tau(4mn-t^2)
			\leq60n^2\bigl(1+\log(2n)\bigr).
		\end{equation}
	\end{lemma}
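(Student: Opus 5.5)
We expand $\tau(4mn-t^2)$ as a sum over divisors and interchange the order of summation, so that each pair $(t,d)$ contributes the number of $m\le n$ compatible with it. Concretely, write
\[
S(n):=\sum_{m\leq n}\sum_{t^2<4mn}\tau(4mn-t^2)=\sum_{m\le n}\sum_{t^2<4mn}\#\{d\ge1: d\mid 4mn-t^2\}.
\]
Every term has $t^2<4n^2$, so $|t|<2n$, and $4mn-t^2\le 4n^2$. The constraint $m\le n$ matters: for fixed $t$ and $d$, the conditions $4mn\equiv t^2\pmod d$ and $t^2<4mn\le 4n^2$ cut out an arithmetic progression of $m$ inside an interval of length at most $n$.

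Using the divisor symmetry $d\leftrightarrow (4mn-t^2)/d$, we have $\tau(k)\le 2\#\{d\le\sqrt k: d\mid k\}$. Since $k\le4n^2$, we may restrict to $d\le 2n$. Thus
\[
S(n)\le 2\sum_{|t|<2n}\sum_{d\le 2n}\#\{m\le n:\ 4nm\equiv t^2 \pmod d\}.
\]
Let $g=\gcd(4n,d)$. The congruence in $m$ is solvable only if $g\mid t^2$. When it is solvable, $m$ is determined modulo $d/g$, so the count is at most $\frac{ng}{d}+1$.

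Summing over $t$ next, the number of $t$ with $|t|<2n$ and $g\mid t^2$ is at most $\frac{4n}{g'}+1$, where $g'$ is the least integer with $g\mid g'^2$. This is where squarefreeness of $n$ enters: $g\mid 4n$ with $n$ squarefree forces $g=2^a g_1$, with $g_1$ squarefree odd and $a\le 2$. Hence $g'\ge g/2$ (indeed $g'= 2^{\lceil a/2\rceil}g_1$), so the $t$-count is $O(n/g+1)$. Combining the two counts, each $d$ contributes
\[
O\Bigl(\bigl(\tfrac{n}{g}+1\bigr)\bigl(\tfrac{ng}{d}+1\bigr)\Bigr)=O\Bigl(\tfrac{n^2}{d}+\tfrac{n}{g}+\tfrac{ng}{d}+1\Bigr).
\]

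Finally we sum over $d\le 2n$. The terms $n^2/d$ give $n^2(1+\log 2n)$, the terms $n/g$ give at most $2n^2$, and the terms $1$ give $2n$. The cross term requires
\[
\sum_{d\le 2n}\frac{n\gcd(4n,d)}{d}\le n\sum_{e\mid 4n}\sum_{d'\le 2n/e}\frac1{d'}\le n\,\tau(4n)(1+\log 2n),
\]
where we substituted $d=ed'$. This is fine but introduces $\tau(n)$, which is not $O(n)$-uniformly bounded by a constant. However, we only need this term to be at most $n^2$: since $\tau(4n)\le 3\tau(n)\le 6\sqrt n$, it is $O(n^{3/2}\log n)$. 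That is within $n^2(1+\log 2n)$ up to the constant.

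The main obstacle is the explicit constant $60$ rather than the asymptotic shape. We will track the constants carefully, for example by bounding the $m$-count by $ng/d+1$ and using $\#\{t\}\le 2(2n)/g'+1$. If the crude $+1$ terms push the total beyond $60$, we will instead use the sharper $m$-count $\lfloor ng/d\rfloor+1$ only when $d/g\le n$. We will also check small $n$, with $n\le$ some bound, directly, since the statement must hold for every squarefree $n$.
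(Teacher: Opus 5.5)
Your proposal is correct and follows the paper's proof essentially step for step: you use divisor symmetry to reduce to $d\le 2n$, then for each $d$ you count $t$ with $g=\gcd(4n,d)\mid t^2$ via $g'\ge g/2$, and $m$ in a single class modulo $d/g$. One small slip in the squarefreeness step: $v_2(g)$ can equal $3$ when $n$ is even, not only $\le 2$, but then $g'=4g_1=g/2$, so $g'\ge g/2$ still holds.

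The paper avoids your cross term $\sum_{d} n\gcd(4n,d)/d$ by noting $g\le d\le 2n$. The $t$-count is then at most $8n/g+1\le 10n/g$, each $d$ contributes at most $30n^2/d$, and the constant $60$ follows at once. Your bookkeeping also closes without any small-$n$ check, since $16n^2\sum_{d\le 2n}\frac1d+32n^2+12n^{3/2}(1+\log 2n)+4n\le 50\,n^2(1+\log 2n)$.
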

	
	
	\section{Counting connection isogenies}\label{subsec:SSEC}
	\subsection{Basic of supersingular elliptic curves}
	Let $k=\overline{\F}_p$ and
	\[
	\Sp=\set{\text{supersingular elliptic curves over }k}/\simeq_k
	\]
	be the set of geometric isomorphism classes. By the usual Deuring class-number count \cite{Deuring1941,Voight2021}, 
	the cardinality $h_p$ of $\Sp$ is given by the formula
	\begin{equation}
		h_p= \left\lfloor\frac{p}{12}\right\rfloor+\epsilon_p, \quad  \text{where}\quad
		\epsilon_p
		=
		\begin{cases}
			0, & p\equiv1\pmod{12},\\
			1, & p\equiv5,7\pmod{12},\\
			2, & p\equiv11\pmod{12}.
		\end{cases}  	
	\end{equation}
	
	For $E\in\Sp$, define
	\[ \delta_E=\min_{0\neq\varphi\in\Hom(E,E^{(p)})}\deg\varphi. \]
	This minimum depends only on the geometric class. By \cite[Theorem~4.2]{AubryOyonoVincent2026},
	\(\delta_E\leq N:=\floor{(p/2)^{1/3}}\) for all $E\in \Sp$.
	
	For \(E\in\Sp\), put
	\[   \cO_E:=\End(E), \quad \End^0(E):=\cO_E\otimes_{\Z}\Q.  \]
	By Deuring's theorem, \(\End^0(E)\) is isomorphic to the quaternion algebra
	\(B_{p,\infty}\) ramified precisely at \(p\) and at the real place, and
	\(\cO_E\) is a maximal order in \(B_{p,\infty}\)
	\cite{Deuring1941}.
	More precisely, after fixing a maximal order
	\(\cO_0\subset B_{p,\infty}\), the geometric supersingular
	classes may be identified with the locally principal left
	\(\cO_0\)-ideal classes, in such a way that the right order
	associated with \(E\) is isomorphic to \(\End(E)\); see
	\cite[Corollary~42.3.7]{Voight2021}.
	
	Let $\pi_E:E\longrightarrow E^{(p)}$ denote the relative Frobenius morphism. It is purely inseparable of
	degree \(p\). Associated with \(\pi_E\) is the two-sided ideal generated by $\pi_E$. 
	\begin{equation}\label{eq:frobenius-ideal}
		\mathfrak P_E
		=
		\left\{
		\beta\circ \pi_E\mid 
		\beta:E^{(p)}\longrightarrow E
		\right\}.
	\end{equation}
	
	\begin{lemma}\label{lem:frobenius-ideal}
		The ideal \(\mathfrak P_E\) is the unique two-sided prime ideal of
		\(\cO_E\) of reduced norm \(p\). It satisfies \(\mathfrak P_E^2=p\cO_E\), \(\trd(\mathfrak P_E)\subseteq p\Z.\)
		Moreover, the map
		\(
		\mathrm{F}_E:
		\Hom(E,E^{(p)})
		\longrightarrow
		\mathfrak P_E\) with \(\varphi\longmapsto\widehat{\varphi}\circ \pi_E\),
		is an isomorphism of additive groups, and \(\nrd\bigl(\mathrm{F}_E(\varphi)\bigr)
		=
		p\deg\varphi\).
		The map \(\mathrm{F}_E\) also preserves scalar primitivity.
	\end{lemma}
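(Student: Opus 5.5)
The plan is to work in the quaternion order $\cO_E$ and transport everything through the dual isogeny and Frobenius. First I describe $\mathfrak P_E$. Since $B_{p,\infty}$ is ramified at $p$, the completion $\cO_E\otimes\Z_p$ is the unique maximal order of the division algebra $B_{p,\infty}\otimes\Q_p$. That order has a unique maximal two-sided ideal $\mathfrak P_p=\{x:\nrd(x)\in p\Z_p\}$, and it satisfies $\mathfrak P_p^2=p\,(\cO_E\otimes\Z_p)$. The two-sided ideals of $\cO_E$ of reduced norm $p$ are determined by their localizations, which are trivial away from $p$. Hence there is exactly one such ideal, namely $\mathfrak P=\cO_E\cap\mathfrak P_p$, and it is prime with $\mathfrak P^2=p\cO_E$. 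Every element of $\mathfrak P$ has reduced norm divisible by $p$. The reduced characteristic polynomial of $x\in\mathfrak P_p$ is Eisenstein-type (over $\Z_p$, with $x$ integral and $x^2\in p\cO$ up to a unit, in the ramified quadratic subfields), so $\trd(x)\in p\Z_p$. Therefore $\trd(\mathfrak P)\subseteq p\Z$.

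Next I show that $\mathfrak P_E$ is this ideal. It is a left $\cO_E$-module because $\alpha\beta\pi_E=(\alpha\beta)\pi_E$. It is a right $\cO_E$-module because $\pi_E\alpha=\alpha^{(p)}\pi_E$, where $\alpha^{(p)}\in\End(E^{(p)})$ is the Frobenius twist. So $\mathfrak P_E$ is a two-sided ideal containing $\pi_E^\vee\pi_E=p$. Each element $\beta\pi_E$ is inseparable, so its degree is divisible by $p$; thus $\mathfrak P_E\subseteq\{x:p\mid\nrd x\}\cap\cO_E=\mathfrak P$. Conversely, take $x\in\cO_E$ with $p\mid\deg x$. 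Since $E$ is supersingular, $x$ has nontrivial inseparable part, so $x$ factors through the Frobenius $\pi_E$, i.e.\ $x=\beta\pi_E$. Hence $\mathfrak P\subseteq\mathfrak P_E$, and the two ideals coincide.

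For the map $\mathrm F_E$, additivity is clear. Injectivity follows because $\widehat\varphi\pi_E=0$ forces $\varphi=0$, as composition in the integral domain-like setting of nonzero isogenies never gives $0$. Surjectivity follows because any $\beta\pi_E$ with $\beta\in\Hom(E^{(p)},E)$ equals $\mathrm F_E(\widehat\beta)$. Degrees multiply, so $\nrd(\widehat\varphi\pi_E)=\deg\varphi\cdot p$. For scalar primitivity, I mean that $\varphi$ is not of the form $m\psi$ with $m\geq2$ if and only if $\mathrm F_E(\varphi)\notin m\mathfrak P_E$ for all $m\geq 2$. Since $\mathrm F_E$ is a $\Z$-linear bijection, $\mathrm F_E(m\psi)=m\,\mathrm F_E(\psi)$, and $\mathrm F_E(\varphi)=m y$ with $y\in\mathfrak P_E$ pulls back to $\varphi=m\,\mathrm F_E^{-1}(y)$.

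The main obstacle I expect is the converse inclusion $\mathfrak P\subseteq\mathfrak P_E$. It needs the fact that an endomorphism of degree divisible by $p$ on a supersingular curve is inseparable. This holds because $E[p]=0$, so separable isogenies have degree prime to $p$, which lets the factorization through $\pi_E$ go through. The trace statement also needs care, and can alternatively be checked as $\trd(\beta\pi_E)=\beta\pi_E+\widehat{\pi_E}\widehat\beta$. Here $\trd(\beta\pi_E)\in\Z$ and $p\mid\nrd$, and an element of $\cO_E\otimes\Z_p$ with $p\mid\nrd$ has $p$-adic valuation of its discriminant at least $1$, which forces $p\mid\trd$ via the local description.
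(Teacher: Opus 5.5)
Your proof is correct, and it takes a different, more self-contained route than the paper. The paper's proof is essentially a chain of citations:
\begin{itemize}
\item $\mathfrak P_E^2=p\cO_E$ is taken from Voight \S42.2.4;
\item $\trd(\mathfrak P_E)\subseteq p\Z$ comes from $\overline{\mathfrak P_E}=\mathfrak P_E$ and $\mathfrak P_E\cap\Z=p\Z$, via $\trd(x)=x+\bar x$;
\item the additive isomorphism comes from the factorization property of the relative Frobenius combined with duality.
\end{itemize}
The paper does not spell out the uniqueness or primitivity assertions.

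You instead derive everything from the local structure at $p$: the valuation ring of the division algebra, with $\mathfrak P_p=\{x:p\mid\nrd x\}$ and $\mathfrak P_p^2=p\cO_p$. You combine this with the geometric inputs $E[p]=0$ and the factorization of inseparable isogenies through $\pi_E$. Together these identify $\mathfrak P_E$ with $\{x\in\cO_E:p\mid\nrd x\}$. Your approach gives a genuine proof of uniqueness and an explicit arithmetic description of $\mathfrak P_E$, which is exactly what the paper later uses to force $\trd(\mathrm F_E(\varphi))\in p\Z$. The paper's approach buys brevity.

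Two small remarks.
\begin{enumerate}
\item Your ``Eisenstein-type'' description of the trace step is inaccurate as stated. An element of $\mathfrak P_p$ need not lie in a ramified quadratic subfield: $pu$, with $u$ a unit of the unramified subfield, has reduced characteristic polynomial $T^2-p\trd(u)T+p^2\nrd(u)$, which is not Eisenstein. Your own characterization gives the clean argument at once, and it is the paper's argument: $\nrd(\bar x)=\nrd(x)$ puts $\bar x\in\mathfrak P$, so $\trd(x)=x+\bar x\in\mathfrak P\cap\Z=p\Z$.
\item You read scalar primitivity as primitivity inside the $\Z$-module $\mathfrak P_E$. Under that reading the claim is just $\Z$-linearity. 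It differs from primitivity in $\cO_E$: for example $\mathrm F_E(\pi_E)=p\in p\cO_E$, but $p\notin p\mathfrak P_E$. The two notions agree whenever $p\nmid\deg\varphi$, and that is the only case used later, since there all degrees are $<p/4$.
\end{enumerate}
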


	\begin{proof}
		From \cite[\S 42.2.4]{Voight2021}, $ \mathfrak P_E^2=p\mathcal O_E$.
		By \cite[16.6.14]{Voight2021} we have $\overline{\mathfrak{P}_E}=\mathfrak{P}_E$, and \cite[Theorem 18.3.6]{Voight2021} tells $\mathfrak{P}_E\cap\Z=p\Z$, as a consequence, \(\trd(\mathfrak P_E)\subseteq p\Z\).  
		
		By the factorisation property of the relative Frobenius,
		composition with $\pi_E$ induces an isomorphism of
		additive groups
		\[
		\operatorname{Hom}(E^{(p)},E)
		\xrightarrow{} 
		\mathfrak P_E,
		\qquad
		\beta\longmapsto\beta\circ \pi_E.
		\]
		Composing with the duality isomorphism, 
		we obtain the asserted additive isomorphism
		$\mathrm{F}_E(\varphi)=\widehat{\varphi}\circ \pi_E$. For more details about isomorphism, see also \cite[Lemma 42.2.7]{Voight2021}.  Moreover, \(\nrd\bigl(\mathrm{F}_E(\varphi)\bigr)
		=
		\deg\bigl(\widehat{\varphi}\circ \pi_E\bigr)
		=
		\deg\widehat{\varphi}\,\deg \pi_E
		=
		p\deg\varphi.
		\)

	\end{proof}
	
	\subsection{Brandt operators}
	Let
	\[
	\Mp
	:=
	\mathbb Z[\Sp]
	=
	\bigoplus_{[E]\in\Sp}\mathbb Z\,e_{[E]}.
	\]
	For each \(n\geq 1\), the degree-\(n\) isogeny correspondence defines
	a natural \(\mathbb Z\)-linear operator on \(\Mp\), whose matrix records the
	degree-\(n\) isogeny correspondences among the supersingular
	isomorphism classes; see
	\cite[Definition~41.2.4 and Exercise~42.4(a)]{Voight2021}.
	
	\begin{definition}[Brandt operator]
		\label{def:BrandtOperator}
		For \(n\geq 1\), \(T_n \colon \Mp \to \Mp,
		\) is  the \(\mathbb Z\)-linear operator defined by
		\[T_n(e_{[E]})=\sum_{\substack{C\subset E\\ \#C=n}}e_{[E/C]},\]
		After choosing an ordering
		\(\Sp=\{[E_1],\ldots,[E_{h_p}]\},
		\), we denote the matrix of \(T_n\), with respect to the ordered basis
		\((e_{[E_1]},\ldots,e_{[E_{h_p}]})\), by \(T(n)\) in the
		column-source convention, i.e.
		\[
		T(n)_{ij}
		=
		\#\left\{
		C\subset E_j:
		\begin{array}{l}
			C\text{ is a finite subgroup of order }n,\\
			E_j/C\cong E_i
		\end{array}
		\right\}.
		\]
	\end{definition}
	
	When \((n,p)=1\), every subgroup of order $n$ is the kernel of a separable isogeny of degree \(n\). Consequently,
	\begin{equation}\label{eq:brandt-orbits}
		T(n)_{ij}
		=
		\#\left(
		\Aut(E_i)\backslash
		\left\{
		\varphi\colon E_j\to E_i:
		\deg\varphi=n
		\right\}
		\right),
	\end{equation}
	where \(\Aut(E_i)\) acts by postcomposition.
	
	At the ramified prime \(p\), a supersingular elliptic curve $E$ admits a unique isogeny of degree \(p\), namely \(\pi_E\). Hence $T_p(e_{[E]})=e_{[E^{(p)}]}$.
	We write $\mathcal W_p:=T_p$
	for the ramified Brandt operator and $W_p:=T(p)$ for its matrix. Thus \(W_p\) is the permutation matrix induced by $[E]\longmapsto [E^{(p)}]$.
	Since 
	$[E^{(p)}]^{(p)}=[E]$,  $\mathcal W_p^2=\operatorname{id}_{\Mp}$ which corresponds to matrix $I_{h_p}$.
	See \cite[\S42.2.4 and Exercise~42.4(b)--(c)]{Voight2021}.
	
	For \(n\geq 1\), define
	\[
	\mathcal R_E(n)
	:=
	\Aut(E^{(p)})\backslash
	\left\{
	\varphi\colon E\to E^{(p)}\mid 
	\deg\varphi=n
	\right\},
	\]
	where \(\Aut(E^{(p)})\) acts by postcomposition: \(u\cdot\varphi:=u\circ\varphi\). 
	We refer to the elements of \(\mathcal R_E(n)\) as
	\emph{degree-\(n\) connection isogenies}, and set
	\[r_E(n):=\#\mathcal R_E(n). \]
	Thus, \(r_E(n)\) counts orbits of degree-\(n\) connection isogeny from \(E\) to \(E^{(p)}\). With the column-source convention for the Brandt matrices, one has
	\begin{equation}
		r_E(n)=T(n)_{E^{(p)},E}.
	\end{equation}
	Consequently,
	\begin{equation} \label{eq:brandt-orbits2}
		\sum_{E\in\Sp}r_E(n)
		=\sum_{E}\sum_{F} (W_p)_{E,F}(T(n))_{F,E}=
		\operatorname{tr}\bigl(W_pT(n)\bigr).    
	\end{equation}

	There is an equivalent ideal-theoretic description of the Brandt
	operator, obtained through the Deuring correspondence recalled in
	\Cref{subsec:SSEC}; see
	\cite[Corollary~42.3.7]{Voight2021}. Recall the maximal order
	\(\mathcal O_0\subset B_{p,\infty}\) fixed in \Cref{subsec:SSEC}. Under the Deuring correspondence, the
	geometric supersingular classes are identified with the locally
	principal left \(\mathcal O_0\)-ideal classes. Choose representatives
	\(I_1,\ldots,I_{h_p}\)
	indexed compatibly with
	\(\Sp=\{[E_1],\ldots,[E_{h_p}]\}\), so that \(\mathcal O_R(I_i)\simeq\End(E_i)\).
	
	With this indexing, the same Brandt matrix admits the equivalent
	ideal-theoretic description
	\begin{equation}
		\label{eq:brandt-ideal}
		T(n)_{ij}
		=
		\#\left\{
		J\subset I_j:
		\operatorname{nrd}(J)
		=
		n\,\operatorname{nrd}(I_j),
		\quad
		[J]_L=[I_i]_L
		\right\}.
	\end{equation}
	Equation~\eqref{eq:brandt-ideal} is precisely the ideal-class realization of the geometric Brandt operator of
	Definition~\ref{def:BrandtOperator}.
	\footnote{Voight formulates Brandt matrices using right ideals\cite{Voight2021}. The left-ideal
		formulation used here is obtained by ideal conjugation, which changes
		the matrix by transposition and hence does not affect its trace.}

	Through the ideal-class realization above, the trace of the Brandt operator
	can be computed as follows.

	Let \(\widehat{\mathbb Z}
	:=
	\prod\limits_{\ell<\infty}\mathbb Z_\ell\)
	be the profinite completion of $\mathbb Z$, where $\ell$ ranges over
	all finite primes. For an imaginary
	quadratic order \(\mathcal O_D\), put \(\widehat{\cO_D}
	:=
	\cO_D\otimes_{\mathbb Z}\widehat{\mathbb Z}
	=
	\prod\limits_{\ell<\infty}
	\cO_{D,\ell}\), 
	where 
	\(
	\cO_{D,\ell}
	:=
	\cO_D\otimes_{\mathbb Z}\mathbb Z_\ell\). Similarly, \(
	\widehat{\cO}_0
	:=
	\cO_0\otimes_{\mathbb Z}\widehat{\mathbb Z}
	=
	\prod\limits_{\ell<\infty}\cO_{0,\ell}\) with \(
	\cO_{0,\ell}
	:=
	\cO_0\otimes_{\mathbb Z}\mathbb Z_\ell\).
	The group 
	$
	\widehat{\cO}^{\times}_0
	=
	\prod\limits_{\ell<\infty}\cO_{0,\ell}^\times$
	is the unit group of the finite adelic order $\widehat{\cO}_0$. 
	
	\begin{definition} 
		A $\mathbb{Z}_{\ell}$-algebra embedding
		\(\iota_\ell\colon
		\mathcal O_{D,\ell}\hookrightarrow\mathcal O_{0,\ell}\)
		is called \emph{optimal} if
		$\iota_\ell\left(K_D\otimes_{\mathbb Q}\mathbb Q_\ell
		\right)\cap \cO_{0,\ell} =\iota_\ell(\cO_{D,\ell})$.
		Let
		\(
		\operatorname{Emb}^{\mathrm{opt}}
		\left(
		\mathcal O_{D,\ell},
		\mathcal O_{0,\ell}
		\right)\) denote the set of optimal embeddings
		\(
		\iota_\ell:
		\mathcal O_{D,\ell}
		\hookrightarrow
		\mathcal O_{0,\ell}\). The local optimal-embedding number is defined by
		\begin{equation}
			\label{eq:local-optimal-embedding-number}
			m\!\left(
			\mathcal O_{D,\ell},
			\mathcal O_{0,\ell};
			\mathcal O_{0,\ell}^{\times}
			\right)
			:=
			\#
			\left(
			\mathcal O_{0,\ell}^{\times}
			\backslash
			\operatorname{Emb}^{\mathrm{opt}}
			\left(
			\mathcal O_{D,\ell},
			\mathcal O_{0,\ell}
			\right)
			\right),
		\end{equation}
		where \(\mathcal O_{0,\ell}^{\times}\) acts by conjugation on the target:
		\(
		(u\cdot\iota)(x)
		:=
		u\,\iota(x)\,u^{-1}.
		\)
	\end{definition}
	The corresponding finite adelic optimal embedding number is
	\begin{equation}
		\label{eq:adelic-embedding-product}
		m\!\left(
		\widehat{\mathcal O_D},
		\widehat{\mathcal O_0};
		\widehat{\mathcal O_0}^{\times}
		\right)
		=
		\prod_{\ell<\infty}
		m\!\left(
		\mathcal O_{D,\ell},
		\mathcal O_{0,\ell};
		\mathcal O_{0,\ell}^{\times}
		\right).
	\end{equation}
	
	\begin{lemma}\label{lem:Speciale-Eichler-Trace}
		For a nonsquare positive integer $N$, 
		\begin{equation}
			\operatorname{tr}T(N)
			=
			\frac12
			\sum_{\substack{t\in\mathbb Z\\t^2<4N}}
			\;
			\sum_{\substack{D f^2=t^2-4N\\
					D<0}}
			\frac{h(D)}{w(D)}
			m\!\left(
			\widehat{\cO_D},
			\widehat{\cO_0};
			\widehat{\cO_0}^{\times}
			\right).
			\label{eq:eichler-trace-over-Q}
		\end{equation}
	\end{lemma}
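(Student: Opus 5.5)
We will derive the formula as the specialisation of Eichler's general trace formula for Brandt matrices to the maximal order \(\cO_0\) and level \(1\). The plan has three steps: write the trace as a sum over endomorphisms, regroup by characteristic polynomial and by the order each endomorphism generates, and then count those embeddings class by class.

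\textbf{Step 1: the trace as a sum over endomorphisms.} By \eqref{eq:brandt-orbits},
\[
\operatorname{tr}T(N)=\sum_{i}\#\bigl(\Aut(E_i)\backslash\{\alpha\in\cO_{E_i}:\nrd\alpha=N\}\bigr).
\]
Equivalently, using \eqref{eq:brandt-ideal}, it is \(\sum_i \#(\cO_{R}(I_i)^\times\backslash\{\alpha\in\cO_R(I_i):\nrd\alpha=N\})\). Since \(\Aut(E_i)\) acts freely by left multiplication, this equals
\[
\sum_i \frac{\#\{\alpha\in\cO_{E_i}:\nrd\alpha=N\}}{\#\cO_{E_i}^\times}.
\]

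\textbf{Step 2: regrouping by trace and generated order.} Each \(\alpha\) of norm \(N\) has some trace \(t\) with \(t^2\le 4N\). The case \(t^2=4N\) is excluded because \(N\) is not a square; this is why the hypothesis is needed, as it removes the scalar terms \(\alpha=\pm\sqrt N\). So \(\alpha\notin\Q\), and \(\alpha\) generates an imaginary quadratic field containing \(\Z[\alpha]\), an order of discriminant \(t^2-4N\). The order \(\Q(\alpha)\cap\cO_{E_i}\) contains \(\Z[\alpha]\) and has some discriminant \(D\) with \(Df^2=t^2-4N\). Thus \(\alpha\) corresponds, up to a choice of \(\alpha\) versus its conjugate, to an optimal embedding \(\cO_D\hookrightarrow\cO_{E_i}\). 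Each such embedding \(\iota\) yields exactly two elements, \(\iota(\frac{t\pm f\sqrt{D}}{2})\). Counting embeddings and dividing by \(2\) accounts for the factor \(\tfrac12\), since the sum over \(t\) already separates \(t\) from \(-t\). (Dividing by the two embeddings per element or multiplying by the two elements per embedding must be matched carefully; we will check this on an example such as \(N=2\), \(p=11\).) This gives
\[
\operatorname{tr}T(N)=\frac12\sum_{t}\sum_{Df^2=t^2-4N}\ \sum_i \frac{\#\operatorname{Emb}^{\mathrm{opt}}(\cO_D,\cO_{E_i})}{\#\cO_{E_i}^\times}.
\]

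\textbf{Step 3: the optimal-embedding count.} The remaining task is the classical Eichler mass formula for optimal embeddings, which we cite from Voight (Theorem~30.7.3 and the trace formula in \S41). It states
\[
\sum_i \frac{\#\operatorname{Emb}^{\mathrm{opt}}(\cO_D,\cO_{E_i})}{\#\cO_{E_i}^\times}=\frac{h(D)}{\#\cO_D^\times/2}\,m\bigl(\widehat{\cO_D},\widehat{\cO_0};\widehat{\cO_0}^\times\bigr)=\frac{h(D)}{w(D)}\,m(\cdots).
\]
The proof runs as follows. \(\cO_{E_i}^\times\) acts on \(\operatorname{Emb}^{\mathrm{opt}}(\cO_D,\cO_{E_i})\) by conjugation, with stabiliser \(\iota(\cO_D^\times)\). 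Summing the orbit counts over all ideal classes produces the adelic double coset space \(K_D^\times\backslash\widehat{K}_D^\times\times\prod_\ell(\cO_{0,\ell}^\times\backslash\operatorname{Emb}^{\mathrm{opt}}_\ell)\). Modulo \(\widehat{\cO_D}^\times\), this contributes \(h(D)\) times the product \eqref{eq:adelic-embedding-product} of local counts. Here \(\cO_{E_i}^\times\supseteq\{\pm1\}\), so \(\#\cO_D^\times/\#\{\pm1\}=w(D)\) appears after converting stabiliser sizes. This step uses that \(\cO_0\) is maximal, so locally principal classes equal all classes, and that there is no Eichler level. We expect the main obstacle to be this unit and stabiliser bookkeeping, namely getting the precise factors \(\tfrac12\) and \(w(D)\) right rather than off by \(2\). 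To fix the normalisation, we will first compare the resulting formula against the known mass \(\sum_i 1/\#\cO_{E_i}^\times=(p-1)/24\) and against a small Brandt matrix.
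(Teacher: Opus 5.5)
Your route (write the trace as a weighted count of norm-$N$ elements, regroup by the optimal order $\Q(\alpha)\cap\cO_i$, then apply Eichler's optimal-embedding count) is the standard derivation behind Voight's Theorem~41.5.2, which is all the paper invokes. However, both factor-of-$2$ claims in your Steps 2 and 3 are false. The two errors cancel, so your final display is right, but neither intermediate identity holds.

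\textbf{Step 2.} Fix $i$, $t$ and a square root $\sqrt D\in K_D$. Then $\iota\mapsto\iota\bigl(\tfrac{t+f\sqrt D}{2}\bigr)$ is a bijection from $\bigsqcup_{Df^2=t^2-4N}\Emb^{\mathrm{opt}}(\cO_D,\cO_i)$ onto $\{\alpha\in\cO_i:\trd\alpha=t,\ \nrd\alpha=N\}$. Each $\iota$ gives two elements, but each $\alpha$ lies in the image of exactly two optimal embeddings, namely $\iota$ and $\iota\circ\sigma$ with image $\Q(\alpha)\cap\cO_i$. Exactly one of these sends the chosen root to $\alpha$. So the number of elements equals the number of embeddings, and no $\tfrac12$ arises here; the remark about $t$ versus $-t$ is beside the point. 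Concretely, take $p=5$ (one class, $\#\cO^\times=6$) and $N=2$. There are $18$ elements of norm $2$, and your Step 2 display returns $\tfrac12\cdot 18/6=\tfrac32$, whereas $\operatorname{tr}T(2)=3$.

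\textbf{Step 3.} The stabiliser of $\iota$ under conjugation by $\cO_i^\times$ is $\iota(\cO_D^\times)$, which already contains $\pm1$. So every orbit has size $\#\cO_i^\times/\#\cO_D^\times$, and
\[
\sum_i\frac{\#\Emb^{\mathrm{opt}}(\cO_D,\cO_i)}{\#\cO_i^\times}
=\frac{h(D)}{\#\cO_D^\times}\,m\bigl(\widehat{\cO_D},\widehat{\cO_0};\widehat{\cO_0}^{\times}\bigr)
=\frac{h(D)}{2w(D)}\,m\bigl(\widehat{\cO_D},\widehat{\cO_0};\widehat{\cO_0}^{\times}\bigr),
\]
not $\frac{h(D)}{w(D)}\,m$. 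There is no further quotient by $\{\pm1\}$ to take. For example, take $p=5$ and $D=-3$. The prime $5$ is inert, so the local factor at $5$ is $2$. Since $\cO^\times\cong C_6$ has exactly two elements of order $3$, the left side is $2/6=1/3$, while your formula gives $2/3$.

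Drop the $\tfrac12$ from Step 2 and use $\#\cO_D^\times=2w(D)$ in Step 3. The argument is then correct, and the $\tfrac12$ in the lemma is exactly the contribution of $-1\in\cO_D^\times$. One minor point: in Step 1, rely on \eqref{eq:brandt-ideal} rather than \eqref{eq:brandt-orbits}. The latter is stated only for $(N,p)=1$, and the lemma is applied with $N=pn$.
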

	This is the specialization to \(F=\mathbb Q\), \(R=\mathbb Z\), and
	the maximal order \(\mathcal O_0\subset B_{p,\infty}\) of the Eichler
	trace formula; see
	\cite[Theorem~41.5.2]{Voight2021}.

	\subsection{Primitive connection isogenies}
	
	\begin{definition}
		A connection isogeny
		\(\varphi\colon E\to E^{(p)}\) is called \emph{primitive} if  $\varphi\neq [d]_{E^{(p)}}\circ\psi$ for  any $d>1$ and $\psi:E\to E^{(p)}$.	
	\end{definition}
	
	Clearly, by definition, a connection isogeny whose degree is squarefree  or minimal  must be primitive. 
	
	Let $\mathcal R_E^\circ(n)\subseteq\mathcal R_E(n)$
	denote the subset of primitive connection isogenies, and put
	\[ r_E^\circ(n):=\#\mathcal R_E^\circ(n).\]
	Decomposing a connection isogeny by its scalar factor gives
	\begin{equation}\label{eq:content-decomp}
		r_E(n)
		=
		\sum_{d^2\mid n}
		r_E^\circ\left(\frac{n}{d^2}\right).
	\end{equation}
	
	We define the total mass of primitive Brandt–Hurwitz type of degree \(n\) by
	\begin{equation}\label{eq:ABH-def}
		A_p^{\BH}(n)
		:=
		\sum_{E\in\Sp}r_E^\circ(n).
	\end{equation}

	\begin{proposition}\label{prop:primitive-mass} If $1\leq n<p/4$, then
		\begin{equation}
			A_p^{\BH}(n)=\frac{\sqrt{pn}}{\pi}L(1,\chi_{-4pn})
			+\frac{\sqrt{pn}}{2\pi}
			\ind{pn\equiv3\mod4}
			L(1,\chi_{-pn}).
			\label{eq:primitive-mass}
		\end{equation}
	\end{proposition}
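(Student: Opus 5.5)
The plan is to reduce $A_p^{\BH}(n)$ to a count of embeddings of imaginary quadratic orders of discriminant dividing $-4pn$ into the maximal orders $\cO_E$. Then I would evaluate the total with the class number formula of \Cref{prop:quadratic-order-class-number}.

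\textbf{Step 1: translate into elements of $\mathfrak P_E$.} By \Cref{lem:frobenius-ideal}, a primitive connection isogeny $\varphi$ of degree $n$ corresponds to $\alpha=\mathrm F_E(\varphi)\in\mathfrak P_E$. This $\alpha$ is primitive in $\mathfrak P_E$ and has $\nrd\alpha=pn$. Since $\trd(\mathfrak P_E)\subseteq p\Z$, write $\trd\alpha=ps$. Then $\alpha$ satisfies $x^2-psx+pn=0$, whose discriminant is $p^2s^2-4pn$. Because $n<p/4$, this is negative only for $s=0$. Hence $\alpha^2=-pn$ and $\alpha$ generates $\Z[\sqrt{-pn}]$, an order of discriminant $-4pn$.

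Orbits under $\Aut(E^{(p)})$ correspond to orbits of $\alpha$ under right multiplication by units of $\cO_E$, up to the usual $\pm$ and conjugation bookkeeping. I expect $r_E^\circ(n)$ to equal the number of elements $\alpha\in\cO_E$ with $\alpha^2=-pn$, primitive in $\mathfrak P_E$, taken modulo $\cO_E^\times$ and normalised by $\#\cO_E^\times/2$.

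Such $\alpha$ give embeddings $\Z[\sqrt{-pn}]\hookrightarrow\cO_E$. Each embedding is optimal for exactly one order $\cO_D\supseteq\Z[\sqrt{-pn}]$, with $D\in\{-4pn,-pn\}$. The option $D=-pn$ occurs only when $pn\equiv3\pmod4$, since $n$ is squarefree or at least $\cO_D$ must contain $\sqrt{-pn}$. Primitivity should exclude larger overorders: any divisor $d>1$ with $d^2\mid 4pn$ and $d$ odd would make $\alpha/d$ integral, which would contradict primitivity. The index-2 case is the subtle one. Here the element $(1+\sqrt{-pn})/2$ is not a multiple of $\alpha$, so both $D=-4pn$ and $D=-pn$ survive.

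\textbf{Step 2: sum over $E$ with mass formula.} Summing over $E\in\Sp$ with weights $1/\#\cO_E^\times$ and applying the Eichler/Deuring mass formula for optimal embeddings gives, for each order $\cO_D$,
\[
\sum_{E}\frac{\#\mathrm{Emb}^{\mathrm{opt}}(\cO_D,\cO_E)}{\#\cO_E^\times}=\frac{h(D)}{\#\cO_D^\times}\,m(\widehat{\cO_D},\widehat{\cO_0};\widehat{\cO_0}^\times).
\]
This is the same mechanism as \Cref{lem:Speciale-Eichler-Trace}, now applied to the off-diagonal matrix $W_pT(n)$ through \eqref{eq:brandt-orbits2}. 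Alternatively, I could derive it from the Eichler trace formula for $\operatorname{tr}(W_pT(n))$ together with Möbius inversion of \eqref{eq:content-decomp}.

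\textbf{Step 3: local embedding numbers.} At $\ell=p$, the prime $p$ is ramified in $\cO_D$, and the local embedding number into the maximal order of the division algebra is $1-\left(\frac{D}{p}\right)=1$. At $\ell\neq p$, $\cO_{0,\ell}\cong M_2(\Z_\ell)$ and the local number is $1$. Then Step 2 together with \Cref{prop:quadratic-order-class-number} gives a term of the form $h(D)/w(D)=\frac{\sqrt{|D|}}{\pi}L(1,\chi_D)$.

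Here I would use the form with the factor $\frac12$ and $w(D)$, as in \Cref{lem:Speciale-Eichler-Trace}. With it, $D=-4pn$ contributes $\frac{2\sqrt{pn}}{\pi}\cdot\frac12 L(1,\chi_{-4pn})$. When $pn\equiv3\pmod4$, $D=-pn$ contributes $\frac{\sqrt{pn}}{2\pi}L(1,\chi_{-pn})$. These match \eqref{eq:primitive-mass}.

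\textbf{Main obstacle.} The hardest part is the normalisation. I need to check that orbits under $\Aut(E^{(p)})$, the identification of $\varphi$ with $\alpha$ and with $-\alpha$ or $\bar\alpha=-\alpha$, and the weights $w(D)$ all match so that the factor $\frac12$ comes out exactly. I would verify this by comparing with \eqref{eq:brandt-orbits2} for small $n$, and by checking that the total $\sum_{d^2\mid n}A_p^{\BH}(n/d^2)$ reproduces the Eichler trace of $W_pT(n)$.
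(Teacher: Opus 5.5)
Your proposal is correct, but it takes a different route from the paper. The paper never isolates primitive elements. It computes the total $\sum_E r_E(n)=\operatorname{tr}(W_pT(n))=\operatorname{tr}T(pn)$ from the Eichler trace formula (\Cref{lem:Speciale-Eichler-Trace}), and kills $t\neq0$ via $p\mid t$ and $t^2<4pn<p^2$. With every local factor equal to $1$, it regroups $\frac12\sum_{f^2\mid 4pn}h(-4pn/f^2)/w(-4pn/f^2)$ as $\sum_{d^2\mid n}\beta_p(n/d^2)$. It then inverts the content decomposition \eqref{eq:content-decomp} to read off $A_p^{\BH}(n)=\beta_p(n)$.

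You instead identify primitive connection isogenies with trace-zero $\alpha$ whose optimal order $\mathcal O_E\cap\Q(\alpha)$ is $\mathcal O_{-4pn}$ or $\mathcal O_{-pn}$. You then apply the weighted optimal-embedding mass formula to just these two orders. This avoids $W_pT(n)=T(pn)$ and the inversion, and it explains structurally why exactly these two $L$-values appear. The price is that the orbit weights and the primitivity/optimal-order dictionary must be checked by hand, rather than inherited from the trace formula.

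Both checks are easy, and the normalisation you flag as the main obstacle is not one. Right multiplication by $\mathcal O_E^\times$ is free on the set of such $\alpha$, so $r_E^\circ(n)=\#\{\alpha\}/\#\mathcal O_E^\times$ exactly. There is no extra division by $\#\mathcal O_E^\times/2$ as your Step 1 suggests, and no separate $\pm\alpha$ bookkeeping, since $-\alpha=\bar\alpha=\alpha\cdot(-1)$ lies in the same orbit. Sending $\alpha$ to the embedding with $\sqrt{-pn}\mapsto\alpha$ is a bijection onto optimal embeddings of the relevant $\mathcal O_D$. Your Step 2 formula therefore gives the contribution $h(D)/\#\mathcal O_D^\times=\frac12 h(D)/w(D)$, so the $\frac12$ is simply $\#\mathcal O_D^\times=2w(D)$.

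For primitivity, note that $n$ is not assumed squarefree, so argue in general. Let $f$ be the index of $\Z[\alpha]$ in $\mathcal O_E\cap\Q(\alpha)$. For $d>1$ with $d^2\mid n$, one has $\alpha/d\in\mathcal O_E$ iff $d\mid f$. Hence $\alpha$ is primitive iff $f=1$, or $f=2$ and $4\nmid n$, which forces $pn\equiv3\pmod 4$. In particular, when $4\mid n$ the discriminant $-pn$ does occur, but only for imprimitive $\alpha$. Your odd-$d$ argument does not cover this case.
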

	
	\begin{proof}
		By \eqref{eq:brandt-orbits2}, the  number of all degree-$n$ connection isogenies  is 
		\[
		\sum_{E\in\Sp}r_E(n)
		=\sum_{E\in\Sp}T(n)_{E^{(p)},E}
		=\Tr(W_pT(n))   =\Tr(T(pn)), 
		\]
		where the last equality follows from  $W_pT(n)=T(pn)$ \cite[Proposition~41.3.1(b)]{Voight2021}. We now apply Lemma~\ref{lem:Speciale-Eichler-Trace} to  the Brandt operator \(T(pn)\). 
		
		We next constrain the admissible reduced trace parameters. Let \(\cO_0\subset B_{p,\infty}\) denote the maximal orders entering the Brandt trace formula as we used in \Cref{lem:Speciale-Eichler-Trace}. We need to sum up $\frac{h(D)}{w(D)}m(
		\widehat{\mathcal O_D},
		\widehat{\mathcal O_0};
		\widehat{\mathcal O_0}^{\times})$ for all integer triple $(t,D,f)$ satisfying $Df^2=t^2-4pn$ and $t^2<4pn$.
		
		We now show that only the term with \(t=0\) can contribute.
		Fix a triple \((t,D,f)\) appearing in the Eichler trace formula. Since \(Df^2=t^2-4pn\), \(\Z[x]/(x^2-tx+pn)\subseteq\mathcal{O}_D\) with index \(f\).

		Hence, the embedding image \(\alpha\in\mathcal O_0\) of \(x\) satisfies \(\trd(\alpha)=t,\) and \(
		\nrd(\alpha)=pn\). The element \(\alpha\) lies in the
		unique maximal two-sided ideal \(\mathfrak P_p\) of
		\(\mathcal O_0\).  As
		\(\trd(\mathfrak P_p)\subseteq p\Z\), we obtain \(p\mid t\).  On the other hand, the trace formula restricts to
		\(t^2<4pn<p^2\), where the last inequality follows from \(n<p/4\).  Thus
		\(|t|<p\), and consequently
		\(
		t=0\).

		When \(t=0\), the imaginary quadratic order generated by \(\alpha\) has discriminant \(t^2-4pn=-4pn\). \(\mathcal{O}_D\) has discriminant
		\(D=-\frac{4pn}{f^2}\) satisfying
		\(f^2\mid4pn\) and \(D\equiv0,1\pmod4\).
		
		We now evaluate the local optimal-embedding numbers
		\(
		m(
		\mathcal O_{D,\ell},
		\mathcal O_{0,\ell};
		\mathcal O_{0,\ell}^{\times})\). At \(\ell=p\), since \(f^2\mid4pn\),
		\(v_p(D)=1\). It follows that \(\mathcal O_{D,p}\) is the maximal order in a
		ramified quadratic extension of \(\mathbb Q_p\). Moreover,
		\(
		B_{p,\infty}\otimes_{\mathbb Q}\mathbb Q_p\) is the quaternion division algebra over \(\mathbb Q_p\), and
		\(\mathcal O_{0,p}\) is its maximal order. Therefore,
		\cite[Proposition~30.5.3(b)]{Voight2021} gives
		\(
		m\!\left(
		\mathcal O_{D,p},
		\mathcal O_{0,p};
		\mathcal O_{0,p}^{\times}
		\right)
		=1\).

		For every finite prime \(\ell\neq p\), the quaternion algebra splits:
		\(
		B_{p,\infty}\otimes_{\mathbb Q}\mathbb Q_\ell
		\simeq M_2(\mathbb Q_\ell)\). Moreover, \(\mathcal O_{0,\ell}\) is a maximal order and is therefore
		conjugate to \(M_2(\mathbb Z_\ell)\). Hence
		\cite[Proposition~30.5.3(a)]{Voight2021} gives
		\(m\!\left(
		\mathcal O_{D,\ell},
		\mathcal O_{0,\ell};
		\mathcal O_{0,\ell}^{\times}
		\right)
		=1\). Together with the corresponding equality at \(\ell=p\), this shows
		that every finite local optimal-embedding factor in the trace formula
		is equal to \(1\). Consequently,
		\[
		m\!\left(
		\widehat{\mathcal O_D},
		\widehat{\mathcal O_0};
		\widehat{\mathcal O_0}^{\times}
		\right)
		=
		\prod_{\ell<\infty}
		m\!\left(
		\mathcal O_{D,\ell},
		\mathcal O_{0,\ell};
		\mathcal O_{0,\ell}^{\times}
		\right)
		=1.
		\]

		The trace formula therefore becomes
		\begin{equation}\label{eq:total-mass-H}
			\sum_{E\in\Sp}r_E(n)=\operatorname{tr}T(pn)=\frac{1}{2}
			\sum_{\substack{d^2\mid 4pn\\ 4pn/d^2\equiv 0,3\mod 4}} \frac{h(-4pn/d^2)}{w(-4pn/d^2)}. 
		\end{equation}
		Write
		\[   \beta_p(n)=\frac12\biggl(
		\frac{h(-4pn)}{w(-4pn)}
		+\ind{pn\equiv3\mod4}
		\frac{h(-pn)}{w(-pn)}
		\biggr). \]
		Then
		\[ \sum_{E\in\Sp}r_E(n)=\sum_{d^2\mid n}  \beta_p(n/d^2) .\]
		On the other hand, 
		\[ \sum_{E\in\Sp}r_E(n)=\sum_{d^2\mid n} \sum_{E\in \Sp}  r_E^\circ(n/d^2)=\sum_{d^2\mid n} A^{\BH}_p(n/d^2). \]
		Hence 
		\begin{equation}\label{eq:primitive-class-number}
			A_p^{\BH}(n)=\beta_p(n)
			=\frac12\biggl(
			\frac{h(-4pn)}{w(-4pn)}
			+\ind{pn\equiv3\mod4}
			\frac{h(-pn)}{w(-pn)}
			\biggr).
		\end{equation}
		Apply the class number formula in Proposition~\ref{prop:quadratic-order-class-number}  into \eqref{eq:primitive-class-number} then gives \eqref{eq:primitive-mass}.
	\end{proof}
	
	\section{Proof of the upper bound}\label{sec:upper}
	We are ready to prove the upper bound:
	\begin{proposition}\label{prop:upper}
		For every fixed $0<\eps<1/2$, there is a constant $c_2>0$ such that the following holds for all sufficiently large $p$ and all $B$ satisfying $L^\eps<u<L^{1-\eps}$:
		\[
		\Prob\bigl(\Pplus(\delta_E)\leq B\bigr)
		\leq c_2\rho(u)\log p.
		\]
	\end{proposition}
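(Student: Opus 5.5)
We bound the event by a union over candidate degrees. If $\Pplus(\delta_E)\le B$, then $n=\delta_E$ is a $B$-smooth integer with $1\le n\le N$, because $\delta_E\le N$ by \cite[Theorem~4.2]{AubryOyonoVincent2026}. Since $\delta_E$ is a minimal degree, the corresponding isogeny is primitive, so $r_E^\circ(n)\ge 1$. Hence
\[
\Prob\bigl(\Pplus(\delta_E)\le B\bigr)\le \frac{1}{h_p}\sum_{\substack{n\le N\\ \Pplus(n)\le B}}\#\{E\in\Sp: r_E^\circ(n)\ge1\}\le \frac1{h_p}\sum_{\substack{n\le N\\ \Pplus(n)\le B}} A_p^{\BH}(n).
\]
For large $p$ we have $N<p/4$, so \Cref{prop:primitive-mass} applies to every term.

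Next we bound each summand. \Cref{prop:primitive-mass} together with \Cref{lem:L-upper}, applied to the nonprincipal characters $\chi_{-4pn}$ and $\chi_{-pn}$ of modulus at most $4pn\le 4pN\ll p^{4/3}$, gives
\[
A_p^{\BH}(n)\le \frac{3\sqrt{pn}}{2\pi}\bigl(\log(4pn)+2\bigr)\ll \sqrt{pn}\,\log p .
\]
Using $h_p\ge p/12-1\gg p$, we get
\[
\Prob\bigl(\Pplus(\delta_E)\le B\bigr)\ll \frac{\log p}{\sqrt p}\sum_{\substack{n\le N\\ \Pplus(n)\le B}}\sqrt n\le \frac{\log p}{\sqrt p}\sqrt N\,\Psi(N,B).
\]
By \Cref{lem:Psi-uniform}, $\Psi(N,B)\ll_\eps N\rho(u)$ in the stated range of $u$; here $L=\log p$, and the $u$ of the lemma agrees with the paper's $u$. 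So the bound is $\ll \rho(u)\log p\cdot N^{3/2}/\sqrt p$. Since $N^{3/2}\le (p/2)^{1/2}$, this is $\ll\rho(u)\log p$.

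Every step is routine. The only points needing care are checking $N<p/4$ and the character modulus so that both earlier results apply, and confirming that $\Psi(N,B)\ll N\rho(u)$ holds uniformly in the range $L^\eps<u<L^{1-\eps}$, which is exactly what \Cref{lem:Psi-uniform} supplies. The constant $c_2$ then depends only on $\eps$, through the implied constant in \Cref{lem:Psi-uniform}.
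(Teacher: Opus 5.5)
Your proposal is correct and is essentially the paper's own argument. You bound the event by $\sum_{n\le N,\,\Pplus(n)\le B} r_E^\circ(n)$, average to $\frac{1}{h_p}\sum A_p^{\BH}(n)$, bound each mass by $\frac{3\sqrt{pn}}{2\pi}\bigl(\log(4pn)+2\bigr)$ via \Cref{prop:primitive-mass} and \Cref{lem:L-upper}, and conclude with $h_p\asymp p$, $N^{3/2}\le (p/2)^{1/2}$ and \Cref{lem:Psi-uniform}; your explicit checks that $N<p/4$ and on the character moduli are details the paper leaves implicit.
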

	
	\begin{proof}
		A shortest connection is primitive, and $\delta_E\leq N$. Hence, pointwise in $\Sp$,
		\[
		\ind{\Pplus(\delta_E)\leq B}
		\leq
		\sum_{\substack{1\leq n\leq N\\\Pplus(n)\leq B}}r_E^\circ(n).
		\]
		Averaging over $\Sp$ and using \eqref{eq:ABH-def}, we have:
		\begin{equation}\label{eq:upper-first-moment}
			\Prob\bigl(\Pplus(\delta_E)\leq B\bigr)
			\leq\frac1{h_p}
			\sum_{\substack{n\leq N\\\Pplus(n)\leq B}}A_p^{\BH}(n).
		\end{equation}
		For sufficiently large $p$,
		by \Cref{prop:primitive-mass} and \Cref{lem:L-upper},
		\begin{equation}\label{eq:ap-upper}
			A_p^{\BH}(n)
			\leq\frac{3\sqrt{pn}}{2\pi}\bigl(\log(4pn)+2\bigr).
		\end{equation}
		Consequently, for some absolute constant $C_{upper}>0$: 
		\begin{align}
			\Prob\bigl(\Pplus(\delta_E)\leq B\bigr)
			&\leq
			\frac{3\sqrt{pN}}{2\pi h_p}
			\bigl(\log(4pN)+2\bigr)\Psi(N,B) \notag\\
			&\leq C_{upper}(\log p)\frac{\Psi(N,B)}N,
			\label{eq:upper-Psi}
		\end{align}
		where we used $h_p\asymp p$ and $N\asymp p^{1/3}$. Applying Lemma~\ref{lem:Psi-uniform} proves the result.
	\end{proof}

	\section{Proof of the lower bound}\label{sec:lower}
	

	\subsection{Quaternionic pairs estimate}
	For \(1\leq m,n<p/4\), define the count of ordered distinct pairs
	\begin{equation}\label{eq:Cp-def}
		C_p(m,n)=
		\begin{cases}
			\displaystyle\sum_{E\in\Sp}r_E^\circ(m)r_E^\circ(n),&m\neq n,\\[2mm]
			\displaystyle\sum_{E\in\Sp}r_E^\circ(n)\bigl(r_E^\circ(n)-1\bigr),&m=n.
		\end{cases}
	\end{equation}
	Thus $C_p(m,n)$ counts ordered pairs of distinct primitive connection isogenies of degrees $m$ and $n$ on a common curve. We will count the number of pairs by counting corresponding maximal order. 
	
	For each \(E\in\Sp\) and every integer \(1\leq d<p/4\), we fix, once and for all, a representative \(\varphi_x:E\to E^{(p)}\) for each \(x\in\mathcal R_E^\circ(d)\). These representatives are kept fixed for the entirety of the counting argument.
	
	Fix a supersingular elliptic curve \(E\) and let \(\varphi_m,\varphi_n:E\to E^{(p)}\) represent two distinct primitive connection-isogeny of degrees \(m\) and \(n\), respectively. We associate to this ordered pair a triple \((m,n,t)\) and a corresponding quaternion order \(R\subset B_{p,\infty}\).
	
	Recall the the additive isomorphism established in \Cref{lem:frobenius-ideal}:
	\begin{equation}\label{eq:Theta-def}  \mathrm{F}_E:\Hom(E,E^{(p)})\longrightarrow\mathfrak P_E
	\end{equation}
	where $ \varphi\longmapsto\widehat\varphi\circ \pi_E$. 
	
	We first apply \(\mathrm{F}_E\) to the ordered pair \((\varphi_m,\varphi_n)\), obtaining
	\(
	i:=\mathrm{F}_E(\varphi_m), j:=\mathrm{F}_E(\varphi_n).
	\) 
	Then \(\operatorname{Nrd}(i)=pm\) and \(\operatorname{Nrd}(j)=pn\). 
	Consider the \(\mathbb Z\)-module
	\(
	R:=\mathbb Z[1,i,j,ij]\subseteq B_{p,\infty}\). We will show that \(R\) is a full rank order in $B_{p}$.
	
	We now associate a integer triple \((m,n,t)\) with the isogeny pair \((\varphi_m,\varphi_n)\) by the order $R$
	\footnote{
		Strictly speaking, the parameter $t$ depends on
		the choice of representatives for the primitive
		connection isogeny pairs. For example, replacing
		$\varphi_m$ by $-\varphi_m$ sends $t$ to $-t$. Throughout this counting argument, we fix one representative for each pair once and for all. With these choices, every distinct ordered pair determines a unique parameter $t$,
		and all subsequent upper bounds are independent of the choices.
	}. Throughout our counting argument, we restrict to primitive connection isogenies of degrees \(m,n<X<p/4\).
	Since \(i,j\in\mathfrak P_E\), we have
	\(
	\operatorname{trd}(i),\operatorname{trd}(j)\in p\mathbb Z\). Moreover, $m<X<p/4$, so \(
	\operatorname{trd}(i)^2
	\leq 4\operatorname{Nrd}(i)
	=4pm<p^2\). Thus \(\operatorname{trd}(i)=0\), and similarly \(\operatorname{trd}(j)=0\). 
	
	Furthermore, since \(\mathfrak P_E^2=p\mathcal O_E\), we have \(ij\in p\mathcal O_E\), and hence \(\operatorname{trd}(ij)\in p\mathbb Z\). Using \(\overline{i}=-i\) and \(\overline{j}=-j\), we obtain
	\(
	\operatorname{trd}(ij)
	=ij+\overline{ij}
	=ij+ji\in p\mathbb Z\). 
	Consequently, there exists a unique integer \(t\) such that
	\begin{equation}\label{eq:ij-relations}
		i^2=-pm,\qquad
		j^2=-pn,\qquad
		ij+ji=-pt.
	\end{equation}
	These relations associate the triple \((m,n,t)\) with the ordered pair \((\varphi_m,\varphi_n)\) and determine the multiplication relations in \(R=\mathbb Z[1,i,j,ij]\). Moreover, a direct computation gives $\operatorname{discrd}(R)=p^2(4mn-t^2)$. If \(\operatorname{discrd}(R)=0\), then \(i\) and \(j\) are linearly dependent over \(\mathbb Q\). This implies \(i=\pm j\). By the injectivity of \(\mathrm{F}_E\), we obtain \(\varphi_m=\pm\varphi_n\), contradicting the assumption that they represent distinct connection isogeny classes. Thus \(\operatorname{discrd}(R)>0\). This shows that \(R=\mathbb Z[1,i,j,ij]\)
	is a full rank order in \(B_{p, \infty}\), with reduced discriminant
	\(\operatorname{discrd}(R)=p^2(4mn-t^2)\).

	The bijection \(\mathrm{F}_E\) associates a triple \((m,n,t)\), with \(4mn-t^2>0\), to each pair of distinct primitive connection isogenies. However, counting such triples alone does not suffice to count the number of isogeny pairs, since multiple pairs may give rise to the same triple. We therefore need how many pairs of primitive connection isogenies corresponding to each triple \((m,n,t)\).
	
	For $1\leq m,n<p/4$ and $E\in\Sp$, define
	\[
	\mathscr P_E(m,n):=
	\left\{
	(x,y)\in
	\mathcal R_E^\circ(m)\times
	\mathcal R_E^\circ(n):x\neq y
	\right\}.
	\]
	$C_p(m,n)$ counts all pairs of connecting isogenies of degrees $m$ and $n$ over all curves in $\Sp$, so 
	\(
	C_p(m,n)=
	\sum\limits_{E\in\Sp}\#\mathscr P_E(m,n)\).
	
	For each $(x,y)\in\mathscr P_E(m,n)$, set \(
	t_E(x,y):=
	-\frac{1}{p}\operatorname{trd}
	\left(
	\mathrm{F}_E(x)
	\mathrm{F}_E(y)
	\right)\). By the preceding argument,
	$t_E(x,y)$ is an integer satisfying
	$t_E(x,y)^2<4mn$.
	
	For each integer $t$, set the subset of $\mathscr P_E(m,n)$ consists of pairs with $t_E(x,y)=t$ 
	\[
	\mathscr P_{E,t}(m,n):=
	\left\{
	(x,y)\in
	\mathscr P_E(m,n):\operatorname{trd}\left(\mathrm{F}_E(x)\mathrm{F}_E(y)\right)=-pt
	\right\},
	\]
	and define
	\begin{equation}\label{eq:Cpt-definition}
		C_{p,t}(m,n):=
		\sum_{E\in\Sp}
		\#\mathscr P_{E,t}(m,n).
	\end{equation}
	Since every pair of distinct connection isogenies yields a unique integer $t$,
	we obtain
	\begin{equation}\label{eq:Cp-sum-t}
		C_p(m,n)=\sum_{\substack{t\in\Z\\t^2<4mn}}C_{p,t}(m,n).
	\end{equation}
	
	We next classify the isogeny pairs associated with a fixed triple \((m,n,t)\) according to the isomorphism class of \(\End(E)\). For each maximal-order class \([\mathcal M]\) in \(B_{p,\infty}\), let \(C_{p,t}(m,n;[\mathcal M])\) denote the number of such pairs over all \(E\in\mathcal S_p\) with \(\End(E)\cong\mathcal M\). That is, we define
	\[
	C_{p,t}(m,n;[\mathcal M]):=
	\sum_{\substack{E\in\Sp\\
			\End(E)\simeq\mathcal M}}
	\#\mathscr P_{E,t}(m,n).
	\]\color{black}
	Then
	\[
	C_{p,t}(m,n)
	=
	\sum_{[\mathcal M]}C_{p,t}(m,n;[\mathcal M]).
	\]
	Consequently,
	\[
	C_p(m,n)
	=
	\sum_{\substack{t\in\mathbb Z\\t^2<4mn}}
	\sum_{[\mathcal M]}
	C_{p,t}(m,n;[\mathcal M]).
	\]
	We therefore bound \(C_p(m,n)\) by estimating \(C_{p,t}(m,n;[\mathcal M])\) for each fixed triple \((m,n,t)\) and maximal-order class \([\mathcal M]\).
	
	Fix a triple \((m,n,t)\) with \(D=4mn-t^2>0\), and choose a corresponding pair \((i,j)\) defining the order
	\(
	R=\mathbb Z[1,i,j,ij]\subset B_{p,\infty}\). Since the relations in \eqref{eq:ij-relations} determine the multiplication table on the basis \((1,i,j,ij)\), any other pair \((i',j')\) satisfying the same relations induces a \(\mathbb Q\)-algebra automorphism of \(B_{p,\infty}\) sending \(i\) to \(i'\) and \(j\) to \(j'\). By the Skolem–Noether theorem \cite[Theorem~7.1.3]{Voight2021}, this automorphism is given by conjugation by some \(b\in B_{p,\infty}^\times\). Consequently,
	\[
	R'=\mathbb Z[1,i',j',i'j']=bRb^{-1}.
	\]
	
	Now fix a maximal order \(\mathcal M\subset B_{p,\infty}\). Every pair \((i',j')\) satisfying \eqref{eq:ij-relations} and lying in \(\mathcal M\) determines an embedding
	\(
	\iota:R\hookrightarrow\mathcal M\), where \(\iota(i)=i', \iota(j)=j'\). 
	Thus, for a fixed curve \(E\) with \(\End(E)\cong\mathcal M\), the number of admissible primitive connection-isogeny pairs associated with \((m,n,t)\) is bounded by the number of such embeddings. 
	
	We next establish a bijection between the set of embeddings and a suitable quotient of the set of conjugating elements.
	
	For a maximal order $\cM\subset B_{p,\infty}$, let $\Emb^{\mathrm{lab}}(R,\cM)$ denote the set of unital \(\mathbb Z\)-algebra embeddings
	\(\iota:R\hookrightarrow\mathcal M\), recorded by the ordered pair
	\((\iota(i),\iota(j))\). Put $N_{B_{p,\infty}^{\times}}(\cM)$ the normalizer of $\cM$ in $B_{p,\infty}$. Define
	\[\Omega_R([\cM])
	=\set{\cM'\subset B_{p,\infty}:R\subseteq\cM',\ \cM'\text{ maximal and conjugate to }\cM},\]
	\[A(\cM)=N_{B_{p,\infty}^\times}(\cM)/(\Q^\times\cM^\times),\]
	and counting them by \(N_R([\cM])=\#\Omega_R([\cM])\) and 
	\(a_\cM=\#A(\cM).\)
	
	\begin{lemma}\label{lem:emb-overorders}
		There is a natural identification
		\begin{equation}\label{eq:emb-double-coset}
			\cM^\times\backslash\Emb^{\mathrm{lab}}(R,\cM)
			\simeq
			\cM^\times\backslash
			\set{b\in B_{p,\infty}^\times:bRb^{-1}\subseteq\cM}/\Q^\times.
		\end{equation}
		The map $\Gamma \colon \cM^\times\backslash
		\set{b\in B_{p,\infty}^\times:bRb^{-1}\subseteq\cM}/\Q^\times\to \Omega_R([\cM])$ with \([b]\longmapsto b^{-1}\cM b\)
		is surjective, and every fiber is an $A(\cM)$-torsor. Consequently,
		\begin{equation}\label{eq:emb-overorder-count}
			\#\bigl(\cM^\times\backslash\Emb^{\mathrm{lab}}(R,\cM)\bigr)
			=a_\cM N_R([\cM]).
		\end{equation}
	\end{lemma}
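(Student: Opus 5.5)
The proof will combine Skolem--Noether with the standard bookkeeping of orbits. All the pieces will be set up on the level of sets, with $\cM^\times$ acting on the left and $\Q^\times$ on the right.

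\emph{First step: the identification \eqref{eq:emb-double-coset}.} Since the relations \eqref{eq:ij-relations} fix the multiplication table of $R$, every $\iota\in\Emb^{\mathrm{lab}}(R,\cM)$ extends to a $\Q$-algebra embedding $R\otimes\Q=B_{p,\infty}\to B_{p,\infty}$; here $R\otimes\Q=B_{p,\infty}$ because $R$ has full rank. By Skolem--Noether, as recalled before the lemma, this extension is $x\mapsto bxb^{-1}$ for some $b\in B_{p,\infty}^\times$. The element $b$ is unique up to the centre $\Q^\times$, and it satisfies $bRb^{-1}\subseteq\cM$. Conversely, each such $b$ defines a labelled embedding $\iota_b$. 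This gives a bijection
\[
\Emb^{\mathrm{lab}}(R,\cM)\simeq\{b: bRb^{-1}\subseteq\cM\}/\Q^\times .
\]
Conjugating $\iota$ by $u\in\cM^\times$ corresponds to $b\mapsto ub$, so the bijection is $\cM^\times$-equivariant. Passing to $\cM^\times$-orbits on both sides gives \eqref{eq:emb-double-coset}.

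\emph{Second step: the map $\Gamma$.} It is well defined. Indeed, $bRb^{-1}\subseteq\cM$ is equivalent to $R\subseteq b^{-1}\cM b$, and the order $b^{-1}\cM b$ is maximal and conjugate to $\cM$. Replacing $b$ by $ubq$ with $u\in\cM^\times$ and $q\in\Q^\times$ does not change $b^{-1}\cM b$. It is surjective: if $\cM'=c\cM c^{-1}\supseteq R$, take $b=c^{-1}$.

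For the fibres, suppose $b^{-1}\cM b=b'^{-1}\cM b'$. Then $g:=b'b^{-1}$ lies in $N(\cM):=N_{B_{p,\infty}^\times}(\cM)$. Conversely, $gb$ lies in the fibre of $b$ for every $g\in N(\cM)$, since $(gb)R(gb)^{-1}=g(bRb^{-1})g^{-1}\subseteq g\cM g^{-1}=\cM$. So the fibre of $[b]$ is $\cM^\times\backslash N(\cM)b/\Q^\times$. Because $\Q^\times$ is central, right multiplication by $\Q^\times$ can be moved to the left, and the fibre is the image of $N(\cM)/(\Q^\times\cM^\times)\cdot b$.

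\emph{Main point: freeness of the action.} The fibre is in bijection with $A(\cM)$ via $g\mapsto[gb]$; note that $\cM^\times$ is normal in $N(\cM)$, so left and right cosets agree. We must check that the action is free. Suppose $[gb]=[b]$, i.e.\ $gb=ubq$ with $u\in\cM^\times$ and $q\in\Q^\times$. Then $g=uq$ because $q$ is central, so $g\in\Q^\times\cM^\times$. This makes every fibre an $A(\cM)$-torsor. Since $A(\cM)$ is finite (it is a subgroup of the Atkin--Lehner-type group, of order $\le 2$ here), counting gives
\[
\#\bigl(\cM^\times\backslash\Emb^{\mathrm{lab}}(R,\cM)\bigr)=a_\cM\,N_R([\cM]).
\]
The only delicate point is keeping track of which side each group acts on, and confirming finiteness of $\Omega_R([\cM])$. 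The latter holds because $\Omega_R([\cM])$ consists of maximal orders containing the full-rank order $R$, all of which lie in the finite set of overorders sandwiched by $\tfrac{1}{\operatorname{discrd}(R)}R\supseteq\cM'\supseteq R$.
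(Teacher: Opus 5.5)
Your proposal is correct and follows the paper's own argument. It uses Skolem--Noether, with $b$ unique up to the centre $\Q^\times$, for \eqref{eq:emb-double-coset}; the equivalence $bRb^{-1}\subseteq\cM\iff R\subseteq b^{-1}\cM b$ for well-definedness and surjectivity of $\Gamma$; and left multiplication by $N_{B_{p,\infty}^\times}(\cM)$, descending to $A(\cM)$, for the fibres — and you additionally verify the freeness of that action and the finiteness of $\Omega_R([\cM])$, which the paper leaves implicit.
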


	\begin{proof}
		Since $R$ is full rank so every embedding has the form $r \mapsto brb^{-1}$ (for extending to $B_{p,\infty}$ is uniquely up to automorphism of $B_{p,\infty}$). Two conjugates determine the same embedding exactly when they differ on the right by $\mathbb{Q}^\times$. So the first statement is true.
		
		The condition $bRb^{-1} \subseteq \cM$ is equivalent to $R \subseteq b^{-1}\cM b$, so $\Gamma$ is well defined and surjective.
		
		The normalizer $N(\cM)$ acts on $\cM^\times \backslash \bigl\{ b \in B_{p,\infty}^\times : bRb^{-1} \subseteq \cM \bigr\} / \mathbb{Q}^\times$ by left multiplication: $g\cdot [b] := [gb]$. This action is well defined and preserves image of $\Gamma$. The subgroup $\mathbb{Q}^\times \cM^\times$ acts trivially, therefore the action descends to $A(\cM) = N(\cM)/(\mathbb{Q}^\times\cdot \cM^\times)$.
		
		If $\Gamma([b_1]) = \Gamma([b_2])$, then $b_1^{-1}\cM b_1 = b_2^{-1}\cM b_2$, which means $b_1b_2^{-1} \in N(\cM)$ and $b_1A(\cM) = b_2A(\cM)$. Therefore, the kernel of $\Gamma$ is exactly $A(\cM)$. In particular, we have
		\[
		\#\left(\cM^\times \backslash \mathrm{Emb}^{\mathrm{lab}}(R,\cM)\right) = a_M N_R([\cM]).
		\]
	\end{proof}

	For a class of maximal order $[\cM]$, consider the number of supersingular curve classes with endomorphism ring in $[\cM]$:
	\[
	c_\cM=\#\set{E\in\Sp:\End(E)\simeq\cM}.
	\]
	
	\begin{lemma}\label{lem:type-fiber}
		For every maximal order $\cM\subset B_{p,\infty}$,
		\begin{equation}\label{eq:type-fiber-product}
			a_\cM c_\cM=2.
		\end{equation}
		Consequently,
		\begin{equation}\label{eq:type-fiber-emb}
			c_\cM\,
			\#\bigl(\cM^\times\backslash\Emb^{\mathrm{lab}}(R,\cM)\bigr)
			=2N_R([\cM]).
		\end{equation}
	\end{lemma}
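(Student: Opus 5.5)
The plan is to realise both $c_\cM$ and $a_\cM$ as pieces of one exact sequence of groups attached to the two-sided ideals of $\cM$, and then read off \eqref{eq:type-fiber-product} from the fact that this group has order $2$. Let $\Idl(\cM)$ be the group of invertible two-sided fractional $\cM$-ideals, and let $\PIdl(\cM)\subseteq\Idl(\cM)$ be the subgroup of principal ones $\alpha\cM=\cM\alpha$. Such an $\alpha$ lies in $N_{B_{p,\infty}^\times}(\cM)$, so $\PIdl(\cM)\simeq N_{B_{p,\infty}^\times}(\cM)/\cM^\times$. The first step is to compute $\Idl(\cM)$ locally. At $\ell\neq p$ we have $\cM_\ell\simeq M_2(\Z_\ell)$, whose two-sided ideals are $\ell^k\cM_\ell$. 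At $p$ the two-sided ideals of the maximal order of the division algebra are the powers of its unique prime $\mathfrak P$, and $\mathfrak P^2=p\cM$ (this is \Cref{lem:frobenius-ideal} transported to $\cM$). Hence
\[
\Idl(\cM)=\set{q\,\mathfrak P^k: q\in\Q_{>0},\ k\in\Z},
\qquad
\Idl(\cM)/\Q^\times\cM\simeq\Z/2\Z .
\]
Taking the quotient by the rational scalars then gives
\[
\#\bigl(\Idl(\cM)/\PIdl(\cM)\bigr)
=\frac{2}{\#\bigl(\PIdl(\cM)/\Q^\times\cM\bigr)}
=\frac{2}{\#\bigl(N_{B_{p,\infty}^\times}(\cM)/\Q^\times\cM^\times\bigr)}
=\frac{2}{a_\cM}.
\]

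The second step is to show that $c_\cM=\#\bigl(\Idl(\cM)/\PIdl(\cM)\bigr)$. By the Deuring correspondence recalled in \Cref{subsec:SSEC}, classes $E\in\Sp$ with $\End(E)\simeq\cM$ correspond bijectively to left $\cO_0$-ideal classes $[I]_L$ with $\cO_R(I)$ conjugate to $\cM$. After conjugating we may normalise to $\cO_R(I)=\cM$. Fix one such ideal $I_0$. Every other $I$ with $\cO_R(I)=\cM$ is connected to $I_0$ by the invertible two-sided $\cM$-ideal $J=I_0^{-1}I$, and conversely $I_0J$ has right order $\cM$ for every $J\in\Idl(\cM)$. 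The remaining task is to determine when two such left classes coincide and to quotient out the normalising conjugation. The standard fact to establish here (cf.\ the discussion of types in \cite[\S17.4, \S42]{Voight2021}) is that $J\mapsto[I_0J]_L$ induces a bijection
\[
\Idl(\cM)/\PIdl(\cM)\xrightarrow{\ \sim\ }\set{[I]_L:\cO_R(I)\simeq\cM}.
\]
The key identity is $[I_0J]_L=[I_0J']_L$ if and only if $J'=\alpha^{-1}J\alpha'$ form a principal two-sided relation. Geometrically, this bijection says that the classes are $\{E\}$ when $E\simeq E^{(p)}$, and $\{E,E^{(p)}\}$ otherwise, with $\mathfrak P$ acting as the Frobenius twist. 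This is consistent with the fact that $\mathfrak P$ is principal exactly when $a_\cM=2$.

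I expect this bijection, and specifically the careful bookkeeping of conjugation and the normaliser, to be the main obstacle. The two pieces of bookkeeping are: identifying right orders merely up to conjugacy, and checking that principal two-sided ideals correspond precisely to trivial changes of left class. Once it is in place, combining the two steps gives $c_\cM=2/a_\cM$, which is \eqref{eq:type-fiber-product}. Multiplying \eqref{eq:emb-overorder-count} of \Cref{lem:emb-overorders} by $c_\cM$ then yields
\[
c_\cM\,\#\bigl(\cM^\times\backslash\Emb^{\mathrm{lab}}(R,\cM)\bigr)=c_\cM a_\cM N_R([\cM])=2N_R([\cM]),
\]
which is \eqref{eq:type-fiber-emb}.
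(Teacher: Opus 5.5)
Your proposal is correct and follows the paper's route. You identify $c_\cM$ with $\#\bigl(\Idl(\cM)/\PIdl(\cM)\bigr)$ via the fibre of the type map, use that $\Idl(\cM)/\Q^\times\cM$ has order $2$ (generated by $\mathfrak P$) with subgroup $\PIdl(\cM)/\Q^\times\cM\simeq A(\cM)$ and quotient $\Idl(\cM)/\PIdl(\cM)$, and then multiply \eqref{eq:emb-overorder-count} by $c_\cM$, giving more detail than the paper, which simply cites Voight for both facts. The one garbled spot is your ``key identity'', which should read: $[I_0J]_L=[I_0J']_L$ if and only if $J'=J\beta$ for some $\beta\in N_{B_{p,\infty}^\times}(\cM)$, that is, $J'\equiv J$ modulo $\PIdl(\cM)$.
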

	
	\begin{proof}
		Consider type map:\[
		\mathbf{type}\colon \mathbf{Cl}(\cO_0)\to \{[\cM]\mid \cM \subset B_{p,\infty} \text{ and maximal}\}
		\]
		with $\mathbf{type}([I])=[\cO_R(I)]$.
		The fiber of the type map over $[\cM]$ is identified with
		\(
		\Idl(\cM)/\PIdl(\cM)\),
		the group of invertible two-sided fractional ideals modulo principal two-sided ideals; see \cite[Proposition~18.5.10, Corollary~42.3.7, and Lemma~42.4.1]{Voight2021}. Hence
		\[
		c_\cM=\#\bigl(\Idl(\cM)/\PIdl(\cM)\bigr).
		\]
		Here $\Pic(\cM)$ denotes the group of isomorphism classes of invertible $\Z$-central $\cM$-bimodules. We then have the exact sequence
		\begin{equation}\label{eq:normalizer-exact}
			1\longrightarrow A(\cM)
			\longrightarrow\Pic(\cM)
			\longrightarrow\Idl(\cM)/\PIdl(\cM)
			\longrightarrow1;
		\end{equation}
		see \cite[Theorem~18.1.3 and Proposition~18.5.3]{Voight2021}. Since $\Z$ has trivial ideal class group and $p$ is the only finite ramified prime of $B_{p,\infty}$, the group $\Pic(\cM)$ has order two, generated by the two-sided prime above $p$. Taking orders in \eqref{eq:normalizer-exact} proves \eqref{eq:type-fiber-product}; \eqref{eq:type-fiber-emb} follows from \Cref{lem:emb-overorders}.
	\end{proof}
	
	Combining the preceding results, we obtain a upper bound for \(C_{p,t}(m,n;[\mathcal M])\).
	
	\begin{lemma}\label{lem:pairs-embeddings}
		Let $(m,n,t)$ be a signed triple arising from an isogeny pair $(\varphi_m,\varphi_n)$ with \eqref{eq:ij-relations}, and let $R=\Z[1,i,j,ij]$ be the corresponding order. Then
		\begin{equation}\label{eq:pairs-emb-type}
			C_{p,t}(m,n;[\cM])
			\leq c_\cM\#\cM^\times
			\#\bigl(\cM^\times\backslash\Emb^{\mathrm{lab}}(R,\cM)\bigr).
		\end{equation}
		Let \(N_{\max}(R)
		=\#\set{\cM'\subset B_{p,\infty}:R\subseteq\cM',\ \cM'\text{ maximal}}\), then
		\begin{equation}\label{eq:pairs-Nmax}
			C_{p,t}(m,n)\leq48N_{\max}(R).
		\end{equation}
	\end{lemma}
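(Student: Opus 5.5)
The plan is to prove \eqref{eq:pairs-emb-type} first, by building an injective-up-to-units map from pairs to labelled embeddings, and then to sum over maximal-order classes using \Cref{lem:type-fiber} and \Cref{lem:emb-overorders}.

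For \eqref{eq:pairs-emb-type}, fix a curve $E$ with $\End(E)\simeq\cM$ and an isomorphism $\End(E)\cong\cM$. To a pair $(x,y)\in\mathscr P_{E,t}(m,n)$ we assign $(\mathrm F_E(x),\mathrm F_E(y))\in\cM^2$. By \eqref{eq:ij-relations} this pair satisfies the same multiplication relations as $(i,j)$. As in the Skolem--Noether discussion, it therefore defines a labelled embedding $\iota\colon R\hookrightarrow\cM$ with $\iota(i)=\mathrm F_E(x)$ and $\iota(j)=\mathrm F_E(y)$.

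The assignment $(x,y)\mapsto\iota$ is injective, because $\mathrm F_E$ is injective (\Cref{lem:frobenius-ideal}) and the representatives $\varphi_x$ were fixed once and for all. Hence $\#\mathscr P_{E,t}(m,n)\le\#\Emb^{\mathrm{lab}}(R,\cM)$. Each $\cM^\times$-orbit under conjugation has at most $\#\cM^\times$ elements, so $\#\Emb^{\mathrm{lab}}(R,\cM)\le\#\cM^\times\cdot\#(\cM^\times\backslash\Emb^{\mathrm{lab}}(R,\cM))$. Summing over the $c_\cM$ curves with $\End(E)\simeq\cM$ gives \eqref{eq:pairs-emb-type}.

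For \eqref{eq:pairs-Nmax}, we combine \eqref{eq:pairs-emb-type} with \eqref{eq:type-fiber-emb} to get
\[
C_{p,t}(m,n;[\cM])\le 2\,\#\cM^\times\,N_R([\cM]).
\]
The unit group of a maximal order in $B_{p,\infty}$ has order $2$, $4$, $6$, $12$ or $24$; the value $24$ occurs only for the Hurwitz-type order at $p=2$, which is excluded since $p\ge5$, and otherwise the order is at most $24$. This yields $C_{p,t}(m,n;[\cM])\le 48\,N_R([\cM])$. Summing over classes $[\cM]$, the sets $\Omega_R([\cM])$ partition the set of all maximal orders containing $R$, so $\sum_{[\cM]}N_R([\cM])=N_{\max}(R)$. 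Together with $C_{p,t}(m,n)=\sum_{[\cM]}C_{p,t}(m,n;[\cM])$, this gives \eqref{eq:pairs-Nmax}.

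I expect the only delicate point to be the first step, and specifically that the pair $(\mathrm F_E(x),\mathrm F_E(y))$ genuinely satisfies the relations \eqref{eq:ij-relations} with the same fixed $t$. This holds by the definition of $\mathscr P_{E,t}$ together with the trace-zero argument preceding the lemma. Two further points need care: the identification $\End(E)\cong\cM$ is only up to conjugation, which is harmless because we are counting and the bound is uniform; and the crude bound $\#\cM^\times\le 24$ is used rather than the exact unit-group orders.
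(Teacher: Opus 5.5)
Your proof is correct and follows the paper's argument essentially step for step: you inject $\mathscr P_{E,t}(m,n)$ into $\Emb^{\mathrm{lab}}(R,\cM)$ via the fixed representatives, $\mathrm F_E$ and an isomorphism $\End(E)\cong\cM$; you bound each $\cM^\times$-orbit by $\#\cM^\times$ and sum over the $c_\cM$ curves; then you combine \eqref{eq:type-fiber-emb} with $\#\cM^\times\le 24$ and the partition of maximal orders containing $R$ by type. (Your list of unit-group orders is a bit muddled, since for $p\ge 5$ one actually has $\#\cM^\times\in\{2,4,6\}$, but the bound $24$ you use is valid; the paper obtains it from $\cM^\times\simeq\Aut(E)$.)
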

	
	\begin{proof}
		Consider supersingular elliptic curve $E$, $\iota_E\colon \End(E)\cong \cM$. The isogeny pair can be represented by: 
		\[
		([\varphi_m],[\varphi_n])
		\longmapsto
		\bigl(\iota_E(\mathrm{F}_E(\varphi_m)),\iota_E(\mathrm{F}_E\left(\varphi_n)\right)\bigr).
		\]
		The image satisfies the \eqref{eq:ij-relations} and therefore gives an embedding $R\hookrightarrow\cM$. 
		This induces a map
		\[\Phi_E\colon \mathscr P_{E,t}(m,n)\to \operatorname{Emb}^{\mathrm{lab}}
		(R,\mathcal M)
		\]
		For fixed $E$, $\Phi_E$ is injective. 
		An $\cM^\times$-orbit contains at most $\#\cM^\times$ embeddings, proving \eqref{eq:pairs-emb-type} after summing over the $c_\cM$ curves of this type.
		
		Since $\cM^\times\simeq\Aut(E)$, one has $\#\cM^\times\leq24$ \cite[Theorem~III.10.1]{Silverman2009}. By \Cref{lem:type-fiber},
		\begin{align*}
			C_{p,t}(m,n)
			&\leq24\sum_{[\cM]}
			c_\cM a_\cM N_R([\cM])\\
			&=48\sum_{[\cM]}N_R([\cM])
			=48N_{\max}(R),
		\end{align*}
		where the last equality partitions the maximal orders containing $R$ by type.
	\end{proof}
	
	

	We next derive an upper bound for \(N_{\max}(R)\). 
	For each finite
	prime \(q\), we first bound the number of maximal
	\(\mathbb Z_q\)-orders containing
	\(
	R_q:=R\otimes_{\mathbb Z}\mathbb Z_q,
	\)
	and then combine these local bounds to obtain a global estimate.

	\subsection{Counting maximal orders containing $R$}\label{sec:CounntingMaximalOrders}
	We next derive an upper bound for \(N_{\max}(R)\). Set $D=4mn-t^2$. For each finite prime $q$, put
	\begin{equation}\label{eq:Bq-def}
		\cB_q(R)=
		\set{\cM_q\subset(B_{p,\infty})_q:\cM_q\text{ maximal over }\Z_q,\ R_q\subseteq\cM_q}.
	\end{equation} First, we factor $N_{\max}(R)$ to product of $\cB_q(R)$. 
	Define the restricted product of the sets
	\(\mathcal B_q(R)\) by
	\[
	\prod_q'\mathcal B_q(R)
	=
	\left\{
	(\mathcal M_q)_q\in\prod_q\mathcal B_q(R)
	\mid 
	\mathcal M_q=R_q
	\text{ for all but finitely many }q
	\right\}.
	\] 
	Since
	\(\operatorname{discrd}(R)=p^2D\), the local order \(R_q\) is maximal for \(q\nmid p^2D\), so
	\(\mathcal B_q(R)=\{R_q\}\) for all but finitely many \(q\).  From the local-global
	correspondence, we have the following lemma without proof. 
	\begin{lemma}\label{lem:local-global-overorders} 
		There is an isomorphism: 
		\[\begin{aligned}
			\left\{
			\mathcal M\subset B_{p,\infty}:
			\mathcal M\text{ is a maximal order and }R\subseteq\mathcal M
			\right\} 	& \xrightarrow{\;\sim\;}
			\prod_q'
			\mathcal B_q(R) \\
			\mathcal M	& \longmapsto
			\bigl(\mathcal M\otimes_{\mathbb Z}\mathbb Z_q\bigr)_q. \\
		\end{aligned}\]
		Consequently, we have $N_{\max}(R)
		=
		\prod_q \#\mathcal B_q(R)$.    
	\end{lemma}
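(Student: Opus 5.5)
The plan is to deduce \Cref{lem:local-global-overorders} from the standard local--global dictionary for lattices in the $\Q$-vector space $B_{p,\infty}$, as in \cite[Theorem~9.1.1, Lemma~9.5.3]{Voight2021}. Given a full-rank $\Z$-lattice $L\subset B_{p,\infty}$, that dictionary provides a bijection
\[
M\longmapsto (M_q)_q, \qquad M_q:=M\otimes_{\Z}\Z_q ,
\]
from full-rank $\Z$-lattices $M\subset B_{p,\infty}$ onto families $(M_q)_q$ of full-rank $\Z_q$-lattices with $M_q=L_q$ for all but finitely many $q$. The inverse is $(M_q)_q\mapsto\bigcap_q\bigl(B_{p,\infty}\cap M_q\bigr)$. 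We take $L=R$, which is legitimate because $R$ has full rank (shown earlier, since $\operatorname{discrd}(R)=p^2D\neq0$).

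The argument then has three steps.

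\textbf{Step 1: the map is well defined.} Let $\cM\supseteq R$ be a maximal order. Each $\cM_q$ is a $\Z_q$-order containing $R_q$, and it is maximal: maximality of an order is a local property, since $\cM$ is maximal if and only if $\cM_q$ is maximal for every $q$ \cite[Lemma~10.4.3]{Voight2021}. Moreover $[\cM:R]$ is finite, so $\cM_q=R_q$ for every $q\nmid[\cM:R]$. Hence $(\cM_q)_q$ lies in the restricted product $\prod_q'\cB_q(R)$.

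\textbf{Step 2: injectivity.} This is immediate from the lattice dictionary, because $\cM=\bigcap_q(B_{p,\infty}\cap\cM_q)$.

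\textbf{Step 3: surjectivity.} Take a family $(\cM_q)_q\in\prod_q'\cB_q(R)$ and let $\cM$ be the unique lattice with these completions. We must check three things.
\begin{itemize}
\item[(i)] $\cM$ is a ring. Multiplicative closure and $1\in\cM$ can be tested locally: for $x,y\in\cM$ we have $xy\in\cM_q$ for every $q$, hence $xy\in\bigcap_q(B_{p,\infty}\cap\cM_q)=\cM$.
\item[(ii)] $R\subseteq\cM$. This holds because $R_q\subseteq\cM_q$ for all $q$.
\item[(iii)] $\cM$ is maximal. This follows from the local criterion again, since every $\cM_q$ is maximal.
\end{itemize}

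\textbf{Counting.} The bijection gives $N_{\max}(R)=\#\prod_q'\cB_q(R)$. Every element of $\cB_q(R)$ contains $R_q$, so it is a lattice between $R_q$ and $R_q^\sharp$. Since $R_q$ is maximal for $q\nmid pD$, the remark preceding the lemma gives $\cB_q(R)=\{R_q\}$ for all but finitely many $q$. The restricted product therefore equals the full product of finitely many nontrivial finite factors. This yields $N_{\max}(R)=\prod_q\#\cB_q(R)$, with all but finitely many factors equal to $1$.

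The main point needing care is surjectivity, specifically that the glued lattice is closed under multiplication. The cleanest way is to use the characterization $\cM=\{x\in B_{p,\infty}: x\in\cM_q \text{ for all } q\}$; everything else is formal.
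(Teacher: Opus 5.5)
Your proof is correct and takes the same route as the paper, which states this lemma without proof as a direct consequence of the local--global correspondence for lattices. Your argument supplies the omitted details: well-definedness from the finiteness of $[\cM:R]$ and the locality of maximality; injectivity, surjectivity and multiplicative closure from $\cM=\bigcap_q(B_{p,\infty}\cap\cM_q)$; and finiteness of the product from $\cB_q(R)=\{R_q\}$ for $q\nmid pD$ together with the sandwich $R_q\subseteq\cM_q\subseteq R_q^\sharp$.
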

	
	So We can get the global bound from the local case. Following computing the local case. Now we will compute the bound of $\# \cB_q(R)$.

	At $q=p$, the division algebra $(B_{p,\infty})_p$ has a unique maximal order, so $\#\cB_p(R)=1$ \cite[Proposition~13.3.4 and Theorem~13.3.11(b)]{Voight2021}.
	
	At \(q\neq p\). Since \(B_{p,\infty}\) is ramified only at \(p\) and \(\infty\), one has
	\(
	B_q\simeq M_2(\Q_q)\). 
	The maximal \(\Z_q\)-orders in \(B_q\) are naturally identified with
	the vertices of the \((q+1)\)-regular Bruhat--Tits tree of
	\(\mathbf{PGL}_2(\Q_q)\); see
	\cite[Definition~23.5.5 and Proposition~23.5.8]{Voight2021}.
	Under this identification, the set
	\(\cB_q(R)\) determines a convex subtree.  
	Set
	\(e_q=v_q(D),
	g_q=\min\{v_q(m),v_q(n)\}\).
	
	If two vertices in $\cB_q(R)$ are at distance $d$, their intersection is an Eichler order of level $q^d$. Since $R_q$ is contained in this intersection, then $d\leq v_q(\operatorname{discrd}(R))=e_q$.     
	Thus $\operatorname{diam}\cB_q(R)\leq e_q$.
	
	Now we only consider the case $n$ or $m$ is square-free. Suppose $m$ is square-free, let $A_i = \mathbb{Z}_q[i]$, $K_i = \mathbb{Q}_q[i]$.
	$\mathcal{O}_{K_i}$ be the maximal $\mathbb{Z}_q$-order. Then we have $A_i = \mathbb{Z}_q + q^{f_i}\mathcal{O}_{K_i}$, for some $f_i \ge 0$. Compute the discriminant on both sides, we have $(-4pm)\mathbb{Z}_q = \operatorname{disc}(A_i) = q^{2f_i}\operatorname{disc}(\mathcal{O}_{K_i})$. Hence $2f_i \le v_q(4pm)$, and then
	\[
	f_i =
	\begin{cases}
		0, & q \neq 2,p,\\
		0,1, & q = 2.
	\end{cases}
	\]
	
	Since $A_i \subseteq R_q$, we have $\cB_q(R) \subseteq \cB_q(A_i)$. Let $S_i = \cB_q(\mathcal{O}_{K_i})$, the standard branch description of quadratic orders
	\cite[\S1]{ArenasCarmonaSaavedra2016}
	gives $\cB_q(A_i) \subseteq N_{f_i}(S_i)$.
	
	We now translate these geometric descriptions into a uniform vertex bound. Let \(T\) be a subtree of the \((q+1)\)-regular Bruhat--Tits
	tree satisfying
	\(\operatorname{diam}(T)\leq e\), \(T\subseteq N_f(S)\), where \(S\) is one of the stems described above.
	Let $T$ be a subtree of the $(q+1)$-regular Bruhat--Tits tree satisfying $\operatorname{diam}(T) \le e$, $T \subseteq N_f(S)$.
	
	If $q \neq 2,p$, $A_i = \mathcal{O}_{K_i}$. Since $S$ is convex, the nearest-point projection $\pi_S : T \to S$ does not increase distances. Thus $\#\pi_S(T) \le e_q + 1$. Moreover, each projection fiber is contained in a ball of radius $f$, which has at most $1$ vertices. Therefore, $\#T \le e_q + 1$. If $q = 2$, $A_i = \mathcal{O}_{K_i}$ or $\mathbb{Z}_q + 2\mathcal{O}_{K_i}$, then each projection fiber at most $1 + (q+1)\displaystyle\sum_{r=0}^{f-1} q^r \le 3q^f$ vertices, so $\#T \le 3(e_2 + 1)q^f \le 6(e_2 + 1)$.

	\begin{theorem}\label{thm:squarefree-pair-bound}
		Let \(1\leq m,n<p/4\), and suppose that \(\mu(n)^2=1\).  For each \(t\) arising from an ordered distinct pair, let \(D=4mn-t^2>0\). 
		Then
		\begin{equation}\label{eq:Cpt-squarefree}
			C_{p,t}(m,n)\leq288\,\tau(D),
		\end{equation}
		where $\tau$ is the divisor function. 
		Consequently,
		\begin{equation}\label{eq:Cp-squarefree}
			C_p(m,n)
			\leq288
			\sum_{\substack{t\in\Z\\t^2<4mn}}
			\tau(4mn-t^2).
		\end{equation}
	\end{theorem}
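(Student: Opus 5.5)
By \eqref{eq:pairs-Nmax} of \Cref{lem:pairs-embeddings}, it suffices to show
\[
N_{\max}(R)\le 6\,\tau(D).
\]
Then $C_{p,t}(m,n)\le 48\cdot 6\,\tau(D)=288\,\tau(D)$, and \eqref{eq:Cp-squarefree} follows by summing over $t$ with $t^2<4mn$ via \eqref{eq:Cp-sum-t}. By \Cref{lem:local-global-overorders} we have $N_{\max}(R)=\prod_q\#\cB_q(R)$, so the plan is to bound each local factor and multiply.

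For the local factors, I run the tree argument of \Cref{sec:CounntingMaximalOrders} with the roles of $m$ and $n$ swapped, since here $n$ is assumed squarefree. So I use $A_j=\Z_q[j]$ with $j^2=-pn$. The same discriminant computation gives $A_j=\Z_q+q^{f_j}\cO_{K_j}$ with $f_j=0$ for $q\ne 2,p$ and $f_j\le1$ for $q=2$. The cases are:
\begin{itemize}
\item $q=p$: we have $\#\cB_p(R)=1$.
\item $q\nmid 2pD$: $R_q$ is maximal, because $\operatorname{discrd}(R)=p^2D$, so $\#\cB_q(R)=1=e_q+1$.
\item $q\mid D$, $q\ne2,p$: $\cB_q(R)$ is a convex subtree of diameter $\le e_q$. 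It lies in $\cB_q(A_j)=S_j$, the stem of the maximal order $\cO_{K_j}$, which is a vertex, an edge, or a line, so in every case a path. The intersection of a path with a set of diameter $\le e_q$ has at most $e_q+1$ vertices. Hence $\#\cB_q(R)\le e_q+1$.
\item $q=2$: the neighbourhood argument gives $\#\cB_2(R)\le 3(e_2+1)\cdot 2^{f_j}\le 6(e_2+1)$.
\end{itemize}

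Multiplying the local bounds gives
\[
N_{\max}(R)\le 6\prod_{q\ne p}(v_q(D)+1).
\]
If $p\nmid D$ this product is $\tau(D)$. If $p\mid D$, the factor at $p$ is still $1$, which is at most $v_p(D)+1$, so the bound $6\,\tau(D)$ holds in all cases.

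The step I expect to be the main obstacle is justifying that the local sets have the claimed shape. Concretely:
\begin{itemize}
\item that $\cB_q(R)$ is convex with diameter at most $v_q(\operatorname{discrd} R)$. This comes from the Eichler order of level $q^d$ containing $R_q$, whose discriminant $q^d$ must divide $\operatorname{discrd}(R_q)$;
\item that $\cB_q(\cO_{K_j})$ is a path. This is the standard fact that in the split case it is an apartment, and otherwise a vertex or an edge;
\item that the $2$-adic neighbourhood of radius $f_j\le1$ contributes the factor $\le 3\cdot 2^{f_j}$. Here I need the fiber count $1+(q+1)\sum_{r<f}q^r$ at $q=2$, which for $f=1$ gives $4\le 3\cdot 2$.
\end{itemize}
Once these are granted from \cite{ArenasCarmonaSaavedra2016} and \cite[\S23.5]{Voight2021}, the rest is bookkeeping.
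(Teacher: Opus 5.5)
Your proposal is correct and follows the paper's proof. You reduce via \eqref{eq:pairs-Nmax} to $N_{\max}(R)\le 6\,\tau(D)$, factor $N_{\max}(R)=\prod_q\#\cB_q(R)$ by \Cref{lem:local-global-overorders}, and bound each local factor using the diameter bound $e_q$ together with the branch of the quadratic order generated by the element of squarefree norm, which gives $1$ at $q=p$, $e_q+1$ for $q\ne 2,p$, and $6(e_2+1)$ at $q=2$. Working with $j$ (reduced norm $pn$) instead of $i$ is the correct adaptation, since the paper's preliminary tree discussion assumes $m$ squarefree while the theorem assumes $\mu(n)^2=1$.
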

	\begin{proof}
		
		
		
		From above statement, for a fixed triple $(m, n, t)$, we have
		\[\#\mathcal B_q(R)\leq e_q+1 \ \
		(q\neq2,p);\qquad \# \cB_2(R)\leq 6(e_2+1).\]
		
		Using the local-global product formula \Cref{lem:local-global-overorders}, we obtain
		\begin{align}
			N_{\max}(R)
			&=\prod_q\#\mathcal B_q(R)\notag\\
			&\leq
			6(e_2+1)
			\prod_{\substack{q\mid D\\ q\neq2,p}}
			(e_q+1)
			\leq
			6\tau(D).
			\label{eq:Nmax-squarefree}
		\end{align}
		Here, if \(p\mid D\), the factor \(e_p+1\) occurring in
		\(\tau(D)\) only enlarges the right-hand side, since
		\(\#\mathcal B_p(R)=1\).
		
		Finally, combining \eqref{eq:Nmax-squarefree} with
		\eqref{eq:pairs-Nmax} gives
		\[
		C_{p,t}(m,n)
		\leq
		48N_{\max}(R)
		\leq
		288\tau(D),
		\]
		which proves \eqref{eq:Cpt-squarefree}. Summing over
		\(t\in\mathbb Z\) with \(t^2<4mn\) and using
		\eqref{eq:Cp-sum-t} yields
		\[
		C_p(m,n)
		\leq
		288
		\sum_{\substack{t\in\mathbb Z\\t^2<4mn}}
		\tau(4mn-t^2).
		\]
	\end{proof}

%
	
	\begin{corollary}\label{cor:squarefree-fixed-endpoint}
		Let \(1\leq n<p/4\) be squarefree.  Then
		\begin{equation}\label{eq:squarefree-fixed-endpoint}
			\sum_{m\leq n}C_p(m,n)
			\leq17280\,n^2\bigl(1+\log(2n)\bigr)
		\end{equation}
		In particular, \(\sum_{m\leq n}C_p(m,n)
		=O\left(n^2\log(2n)\right)\).
	\end{corollary}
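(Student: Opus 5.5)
The plan is to combine \Cref{thm:squarefree-pair-bound} with the joint divisor-sum estimate \Cref{lem:joint-divisor-sum}. Fix a squarefree $n$ with $1\leq n<p/4$, and let $m$ range over $1\leq m\leq n$. Then $m\leq n<p/4$, so the hypotheses of \Cref{thm:squarefree-pair-bound} hold for every such pair $(m,n)$. Note that squarefreeness is needed only for $n$, and no hypothesis on $m$ is required. The theorem then gives, for each $m\leq n$,
\[
C_p(m,n)\leq 288\sum_{\substack{t\in\Z\\ t^2<4mn}}\tau(4mn-t^2).
\]
This covers the diagonal case $m=n$ as well, since $C_p(n,n)$ counts ordered pairs of distinct primitive connection isogenies of degree $n$, and these are exactly the pairs for which the order $R$ was constructed.

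Summing over $m\leq n$, I would then apply \Cref{lem:joint-divisor-sum} directly. It applies because $n$ is squarefree:
\[
\sum_{m\leq n}C_p(m,n)\leq 288\sum_{m\leq n}\sum_{\substack{t\in\Z\\ t^2<4mn}}\tau(4mn-t^2)\leq 288\cdot 60\,n^2\bigl(1+\log(2n)\bigr).
\]
Since $288\cdot 60=17280$, this yields \eqref{eq:squarefree-fixed-endpoint}. The $O(n^2\log(2n))$ form follows because $1+\log(2n)\leq 2\log(2n)$ for $n\geq 1$, using $\log 2>1/2$.

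There is essentially no obstacle here; the corollary is a bookkeeping step. The only point to check is that the constraint $t^2<4mn$ in \eqref{eq:Cp-sum-t} matches the summation range in \Cref{lem:joint-divisor-sum}. It does, because every $t$ arising from a distinct pair satisfies $D=4mn-t^2>0$, and including any extra values of $t$ only enlarges the right-hand side. The real work lies in \Cref{lem:joint-divisor-sum} itself, which is deferred to the appendix and assumed here.
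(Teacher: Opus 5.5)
Your proposal is correct and follows the paper's argument exactly: apply \Cref{thm:squarefree-pair-bound} for each $m\le n$ (valid since $m\le n<p/4$ and $n$ is squarefree), sum, and invoke \Cref{lem:joint-divisor-sum} to get the constant $288\cdot 60=17280$. One small slip in the final step: $1+\log(2n)\le 2\log(2n)$ fails at $n=1$ because it requires $\log(2n)\ge 1$, but your observation $\log 2>1/2$ gives $1+\log(2n)\le 3\log(2n)$, which is enough for the $O$-statement.
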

	
	\begin{proof}
		Combine \Cref{thm:squarefree-pair-bound} with \Cref{lem:joint-divisor-sum}.
	\end{proof}

	\subsection{Bound of $\EE(S_E)$ and $\EE(P_E)$}
	For \(E\in\Sp\), consider the number of all selected-isogeny:
	\begin{equation}\label{eq:SE-def}
		S_E=\sum_{n\in\cA(X,B)}r_E^\circ(n).
	\end{equation}
	On each curve \(E\), order all primitive connection isogenies of degree at most \(X\) by: $\varphi_1<_{\text{order}}\varphi_2$ if $\deg (\varphi_1)<\deg(\varphi_2)$ or $\deg(\varphi_1)=\deg(\varphi_2)$ and $\varphi_1<_{\text{dic}}\varphi_2 $\footnote{Since only an arbitrary total order is required, fix \(E\in\mathcal S_p\) and an integer $n$. The set of primitive connection-isogeny of degree $n$ is finite, so we may list its elements as \([\varphi_{n,1}],\dots,[\varphi_{n,N_{E,n}}]\). We then declare
		\([\varphi_{n,i}] <_{\text{dic}} [\varphi_{n,j}] \iff i<j.\)
		This ordering is arbitrary, but we hold it fixed for the rest of the argument.}. Let \(P_E\) be the number of ordered pairs \((w,z)\) such that
	\begin{itemize}[leftmargin=2em]
		\item \(z\) is selected, meaning \(\deg z\in\cA(X,B)\);
		\item \(w\neq z\) is a primitive connection isogeny of degree at most \(X\);
		\item \(w<_{\text{order}}z\).
	\end{itemize}
	Equivalently,
	\begin{equation}\label{eq:PE-expansion}
		P_E=
		\sum_{n\in\cA(X,B)}
		\left(
		r_E^\circ(n)\sum_{m<n}r_E^\circ(m)
		+\binom{r_E^\circ(n)}2
		\right).
	\end{equation}

	Let \(0<\gamma\leq1\) be a fixed constant and set $\lambda=\gamma(\log p)^{-2}$, $X= \floor{N  \gamma^{2/3} (\log p)^{-4/3}}$ and $v=\log X/\log B$.
	
	\begin{proposition}\label{prop:first-moment} For every \(0<\eps<1/2\), there exists a constant \(a>0\) such that for all sufficiently large \(p\), uniformly for \(B\) satisfying $(\log \frac{p}{2})^{\eps}<u<(\log \frac{p}{2})^{1-\eps}$,
		\begin{equation}\label{eq:SE-lower}
			\EE S_E\geq \frac{a \lambda\rho(v)}{\log p}= \frac{a \gamma \rho(v)}{\log^3 p}\geq \frac{a \lambda\rho(u)}{\log p}= \frac{a \gamma \rho(u)}{\log^3 p}.
		\end{equation}
	\end{proposition}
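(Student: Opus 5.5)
By linearity of expectation, $\EE S_E=\frac1{h_p}\sum_{n\in\cA(X,B)}A_p^{\BH}(n)$. I will bound each summand from below using \Cref{prop:primitive-mass}, then count the summands with \Cref{lem:squarefree-shell}, and finally compare $\rho(v)$ with $\rho(u)$.

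First, the individual terms. Since $X\le N\asymp p^{1/3}$, we have $X<p/4$ for large $p$, so \Cref{prop:primitive-mass} applies to every $n\in\cA(X,B)$. Both $L$-value terms there are nonnegative, since $L(1,\chi)\ge0$ for real characters (it is a positive multiple of a class number). Hence
\[
A_p^{\BH}(n)\ \ge\ \frac{\sqrt{pn}}{\pi}\,L(1,\chi_{-4pn}).
\]
If $pn\not\equiv 3\pmod 4$, then $-4pn=D_n$ and \eqref{eq:tatuzawa-primitive} gives $L(1,\chi_{-4pn})\gg 1/\log p$. If $pn\equiv3\pmod4$, then \eqref{eq:tatuzawa-imprimitive} gives the same bound up to a factor $2$. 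Either way, all but at most one $n\in\cA(X,B)$ satisfy $A_p^{\BH}(n)\ge c\sqrt{pn}/\log p\ge c'\sqrt{pX}/\log p$, using $n>X/2$. The single possible exceptional $n$ is simply discarded, since all terms are nonnegative.

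Next, the count. I apply \Cref{lem:squarefree-shell} with this $X$, so $v=\log X/\log B$. I must check its hypotheses. Because $\log X=\log N-O(\log\log p)$, we get $v=u-O(\log\log p/\log B)$. The range $u<(\log p)^{1-\eps}$ gives $\log B\gg(\log p)^{\eps}$, so $v\sim u\to\infty$. It also gives $\log B\ge(\log\log X)^{2}$ and $B\ge(\log X)^3$. The condition $B\le X$ is clear. Therefore $\#\cA(X,B)\gg X\rho(v)$, and after discarding one term we still have $\gg X\rho(v)$, because $X\rho(v)\to\infty$. Indeed, $\rho(v)=v^{-v(1+o(1))}$ with $v\le(\log p)^{1-\eps}$ is $p^{o(1)}$. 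Combining the two steps with $h_p\asymp p$,
\[
\EE S_E\ \gg\ \frac{1}{p}\cdot\frac{\sqrt{pX}}{\log p}\cdot X\rho(v)=\frac{X^{3/2}\rho(v)}{\sqrt p\,\log p}.
\]
Since $X\asymp N\gamma^{2/3}(\log p)^{-4/3}$ and $N^{3/2}\asymp\sqrt p$, we have $X^{3/2}\asymp\sqrt p\,\gamma(\log p)^{-2}=\sqrt p\,\lambda$. This yields $\EE S_E\ge a\lambda\rho(v)/\log p$, which is the first inequality in \eqref{eq:SE-lower}.

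Finally, the comparison with $\rho(u)$. Since $X\le N\le N_0$, we have $v\le u$, and $\rho$ is non-increasing, so $\rho(v)\ge\rho(u)$. This is the second inequality, and the equalities are just $\lambda=\gamma(\log p)^{-2}$.

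The main obstacle is verifying the hypotheses of \Cref{lem:squarefree-shell} uniformly in the given $u$-range, in particular that $v\to\infty$ and that the lower bound on $\log B$ holds. The constant $a$ must depend only on $\eps$, and the implied constants in that lemma must be uniform in $B$. I would try to confirm this uniformity by reading the lemma's proof in the appendix.
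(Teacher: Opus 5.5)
Your proposal is correct and is essentially the paper's argument. You write $\EE S_E$ as the average of $A_p^{\BH}(n)$ over $\cA(X,B)$, bound each term below by $\gg\sqrt{pX}/\log p$ via \Cref{prop:primitive-mass} and \Cref{lem:tatuzawa} (discarding the one exceptional $n$), count $\#\cA(X,B)\asymp X\rho(v)$ with \Cref{lem:squarefree-shell} after checking its hypotheses, use $X^{3/2}\asymp\sqrt p\,\lambda$, and finish with $v\le u$ and the monotonicity of $\rho$. The only difference is cosmetic: for $pn\equiv 3\pmod 4$ you drop the $L(1,\chi_{-pn})$ term and use the imprimitive bound, whereas the paper adds both terms to get the constant $0.655e^{-2}/\pi$; this affects only the value of $a$.
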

	
	\begin{proof}
		By \Cref{prop:primitive-mass} and \Cref{lem:tatuzawa}, if $pn\equiv 3\mod 4$, 
		\[  A_p^{\BH}(n)=\frac{\sqrt{pn}}{\pi}L(1,\chi_{-4pn})+\frac{\sqrt{pn}}{2\pi}L(1,\chi_{-pn})> \frac{0.655e^{-2}\sqrt{pn}}{\pi\log p}; \]
		Otherwise
		\[ A_p^{\BH}(n)=\frac{\sqrt{pn}}{\pi}L(1,\chi_{-4pn})
		>\frac{0.655e^{-2}}{\pi\log p}\sqrt{pn}. \]
		As a consequence, outside at most one \(n\in\cA(X,B)\),
		\[
		A_p^{\BH}(n)
		>\frac{0.655e^{-2}}{\pi\log p}\sqrt{pn}.
		\]
		
		Since
		\(
		\log X=\frac{\log \frac{p}{2}}{3}+O(\log\log p)\), \(\log B=\frac{\log \frac{p}{2}}{3u}\), we have
		\(v:=\frac{\log X}{\log B}
		=u(1+o_\varepsilon(1))\to\infty\)
		uniformly for \((\log \frac{p}{2})^\varepsilon<u<(\log \frac{p}{2})^{1-\varepsilon}\) as $p\to \infty$. Moreover,
		\(\log B\geq\frac{(\log \frac{p}{2})^\varepsilon}{3}\), \(\log\log X=\log \log \frac{p}{2}+O(1)\). Hence, for sufficiently large \(p\),
		\(
		(\log X)^3\leq B\leq X\) and, for any fixed \(\delta>0\),
		\(
		\log B\geq(\log\log X)^{5/3+\delta}\). Thus \Cref{lem:squarefree-shell} implies that \(\#\mathcal A(X,B)\asymp X\rho(v)\).
		
		The deletion of one element is negligible because
		\(
		\log(X\rho(v))
		=\frac{\log \frac{p}{2}}{3}-o_\eps(\log \frac{p}{2})\longrightarrow\infty\), using \(v\leq (\log \frac{p}{2})^{1-\eps}(1+o(1))\). We count degrees in $\cA(X,B)$, so it satisfies \(n>X/2\). Hence there exits a constant $C_{lower}$
		\begin{align*}
			\EE S_E
			=\frac1{h_p}\sum_{n\in\cA(X,B)}A_p^{\BH}(n)
			\geq C_{lower}\frac{\sqrt{pX}}{p\log p}\,X\rho(v)
			=C_{lower}\frac{X^{3/2}}{\sqrt p}\frac{\rho(v)}{\log p}.
		\end{align*}
		
		Since $\rho$ is nonincreasing, the inequality
		$v\leq u$ also gives
		\begin{equation}\label{eq:ESE-boundv}
			\EE S_E
			\geq C_{lower}
			\frac{X^{3/2}}{\sqrt p}\frac{\rho(v)}{\log p}
			\geq
			C_{lower}\frac{X^{3/2}}{\sqrt p}\frac{\rho(u)}{\log p}.
		\end{equation}
		Since \(N=N_0+O(1)\) and \(X=N\lambda^{2/3}+O(1)\),
		\begin{equation}\label{eq:3quo2X}
			X^{3/2}
			=N_0^{3/2}\lambda(1+o_\eps(1))
			=\frac{\sqrt p}{\sqrt2}\lambda(1+o_\eps(1)),
		\end{equation}
		Applying \eqref{eq:3quo2X} to \eqref{eq:ESE-boundv} proves that
		\[ \EE S_E\geq \frac{a \lambda\rho(v)}{\log p}=\frac{a \gamma \rho(v)}{\log^3 p}\geq a\frac{\lambda\rho(u)}{\log p} = \frac{a \gamma \rho(u)}{\log^3 p}. \qedhere \]
	\end{proof}

	\begin{proposition}\label{prop:collision} Let \(0<\gamma\leq1\) be a fixed constant and set $X= \floor{N  \gamma^{2/3} (\log p)^{-4/3}}$. Let $v=\frac{\log X}{\log B}$. 
		For every \(0<\eps<1/2\), there is a constant \(b>0\) such that for all sufficiently large \(p\), uniformly for \(B\) satisfying $(\log \frac{p}{2})^{\eps}<u<(\log \frac{p}{2})^{1-\eps}$, 
		\begin{equation}\label{eq:PE-upper}
			\EE P_E
			\leq b\gamma^2\rho(v)\log^{-3} p.
		\end{equation}
		Consequently,
		\begin{equation}\label{eq:collision-ratio}
			\frac{\EE P_E}{\EE S_E}
			\leq  \frac ba\gamma.
		\end{equation}
	\end{proposition}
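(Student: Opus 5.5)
We express $\EE P_E$ through the pair counts $C_p(m,n)$ and then apply \Cref{cor:squarefree-fixed-endpoint}. Averaging \eqref{eq:PE-expansion} over $\Sp$, each summand $r_E^\circ(n)\sum_{m<n}r_E^\circ(m)+\binom{r_E^\circ(n)}{2}$ is at most $\sum_{m<n}r_E^\circ(m)r_E^\circ(n)+r_E^\circ(n)(r_E^\circ(n)-1)$. Summing over $E$ and comparing with \eqref{eq:Cp-def}, this gives
\[
\EE P_E\leq\frac1{h_p}\sum_{n\in\cA(X,B)}\sum_{m\leq n}C_p(m,n).
\]
Every $n\in\cA(X,B)$ is squarefree and satisfies $n\leq X<p/4$ for large $p$. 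Hence \Cref{cor:squarefree-fixed-endpoint} applies: the inner sum is at most $17280\,n^2(1+\log(2n))\leq 17280\,X^2(1+\log(2X))$. The primitivity and distinctness hypotheses built into $C_p$ match exactly those needed in \Cref{thm:squarefree-pair-bound}, so nothing extra has to be checked there.

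Summing over $n$ and using $\#\cA(X,B)\asymp X\rho(v)$ from \Cref{lem:squarefree-shell}, we get
\[
\EE P_E\ll\frac{X^3\log X}{h_p}\rho(v).
\]
The hypotheses of that lemma were already verified in the proof of \Cref{prop:first-moment} for the same $X$, $B$ and $v$. By the definition $X=\lfloor N\gamma^{2/3}(\log p)^{-4/3}\rfloor$ and $N^3\leq p/2$, we have $X^3\leq \gamma^2(\log p)^{-4}\,p/2$. Together with $h_p\geq p/12-1\gg p$ and $\log X\leq\log p$, this yields
\[
\EE P_E\ll \gamma^2(\log p)^{-4}\log p\,\rho(v)=\gamma^2\rho(v)(\log p)^{-3}.
\]
That is \eqref{eq:PE-upper}, with $b$ depending only on $\eps$ through the implied constant in \Cref{lem:squarefree-shell}.

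For \eqref{eq:collision-ratio}, we divide by the first inequality of \eqref{eq:SE-lower}, which gives $\EE S_E\geq a\gamma\rho(v)(\log p)^{-3}$ with the same $v$. The factors $\rho(v)$ and $(\log p)^{-3}$ cancel, and we obtain $\EE P_E/\EE S_E\leq (b/a)\gamma$. The main point to watch is that the lower bound for $\EE S_E$ must be used in its $\rho(v)$ form, not its $\rho(u)$ form: $\rho(v)$ and $\rho(u)$ may differ substantially, and only the common $\rho(v)$ cancels. Otherwise the argument is routine bookkeeping. The key input, the $O(n^2\log n)$ pair bound, is already supplied by \Cref{cor:squarefree-fixed-endpoint}, and the $(\log p)^{-4/3}$ scaling in $X$ was chosen precisely so that $X^3\log X/p$ matches $\lambda^2\log p$.
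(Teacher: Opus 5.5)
Your proposal is correct and follows essentially the same route as the paper. Like the paper, you rewrite $\sum_E P_E$ as $\sum_{n\in\cA(X,B)}\sum_{m\le n}C_p(m,n)$, bound this by \Cref{cor:squarefree-fixed-endpoint} and \Cref{lem:squarefree-shell} to get $O(X^3\rho(v)\log p)$, normalize using $h_p\asymp p$ and $X^3/p\ll\gamma^2(\log p)^{-4}$, and then divide by the $\rho(v)$ form of \eqref{eq:SE-lower}. Your only deviation, bounding $\binom{r}{2}$ by $r(r-1)$ rather than using $\tfrac12 C_p(n,n)$, is harmless.
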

	
	\begin{proof}
		Summing \eqref{eq:PE-expansion} over \(E\) and using \eqref{eq:Cp-def},
		\begin{align*}
			\sum_{E\in\Sp}P_E
			&=\sum_{n\in\cA(X,B)}
			\left(
			\sum_{m<n}C_p(m,n)+\frac12C_p(n,n)
			\right)\\
			&\leq\sum_{n\in\cA(X,B)}\sum_{m\leq n}C_p(m,n).
		\end{align*}
		Every $n\in\cA(X,B)$ is squarefree, so
		\Cref{cor:squarefree-fixed-endpoint} gives
		\begin{align*}
			\sum_{E\in\Sp}P_E
			&\leq
			17280\sum_{n\in\cA(X,B)}
			n^2\bigl(1+\log(2n)\bigr)\\
			&\leq
			17280 X^2\bigl(1+\log(2X)\bigr)
			\#\cA(X,B)\\
			&\leq C_{P_E} X^3\rho(v)\log p,
		\end{align*}
		where $C_{P_E}>0$ is an absolute constant. 
		The last line uses \eqref{eq:shell-cardinality} and \(X<p\).  Since \(h_p\asymp p\) and \(X^3/p\asymp\lambda^2\), averaging proves \eqref{eq:PE-upper}.  Dividing by \eqref{eq:SE-lower} proves \eqref{eq:collision-ratio}.
	\end{proof}
	
	\subsection{Proof of the lower bound}
	We derive a lower bound for
	\(\Pr(P^+(\delta_E)\leq B)\)
	from estimates for \(\EE[S_E]\) and \(\EE[P_E]\).
	\begin{lemma}\label{lem:first-hit}
		For every \(E\in\Sp\),
		\begin{equation}\label{eq:first-hit}
			\mathbf{1}_{P^+(\delta)\leq B}\geq S_E-P_E.
		\end{equation}
	\end{lemma}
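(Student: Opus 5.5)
The plan is a pointwise combinatorial argument on the fixed curve \(E\), split according to whether \(S_E=0\).

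\textbf{Case \(S_E=0\).} The right-hand side of \eqref{eq:first-hit} is \(-P_E\le 0\). The left-hand side is nonnegative, so the inequality is trivial.

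\textbf{Case \(S_E\ge1\).} The key is to look at the least element of a totally ordered set. Let \(\mathcal L_E\) be the finite set of primitive connection isogenies of \(E\) of degree at most \(X\), totally ordered by \(<_{\text{order}}\). It is nonempty because \(S_E\ge1\) gives a selected element. Let \(\varphi_1\) be its least element. Its degree is the least degree of a primitive connection isogeny of degree \(\le X\). Every shortest connection isogeny is primitive, and \(S_E\ge1\) provides one of degree \(\le X\). Hence \(\deg\varphi_1=\delta_E\).

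\textbf{The pair count.} Every selected \(z\ne\varphi_1\) satisfies \(\varphi_1<_{\text{order}}z\), so \((\varphi_1,z)\) is one of the pairs counted by \(P_E\). Distinct such \(z\) give distinct pairs. Therefore
\[
P_E\;\ge\;\#\{z\ \text{selected}:z\neq\varphi_1\}\;=\;S_E-\mathbf 1\{\varphi_1\text{ selected}\}.
\]
Here I use that \(S_E\) counts exactly the selected elements of \(\mathcal L_E\), since \(\cA(X,B)\subseteq[1,X]\).

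\textbf{Concluding.} If \(\varphi_1\) is selected, then \(\delta_E=\deg\varphi_1\in\cA(X,B)\). In particular \(P^+(\delta_E)\le B\), and the indicator equals \(1\ge S_E-P_E\). If \(\varphi_1\) is not selected, the display gives \(P_E\ge S_E\), so \(S_E-P_E\le0\le\) the indicator. Either way \eqref{eq:first-hit} holds.

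The only delicate point is the identification \(\deg\varphi_1=\delta_E\). It uses that a minimal-degree connection isogeny is primitive, which the paper notes, and that it lies among those of degree \(\le X\). The latter is guaranteed precisely when \(S_E\ge1\). The remaining steps are bookkeeping.
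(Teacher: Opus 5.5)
Your proof is correct and follows essentially the same route as the paper's: take the least element of the ordered set of primitive connection isogenies of degree at most \(X\), observe that its degree is \(\delta_E\), and pair it with every other selected isogeny to get \(P_E\ge S_E-\mathbf 1\{\text{least element selected}\}\). The only cosmetic difference is that you split on \(S_E=0\) rather than on whether that set is empty, which has the small advantage of making explicit why \(\delta_E\le X\) in the nontrivial case.
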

	
	\begin{proof}
		If there is no primitive root of degree at most \(X\), then \(S_E=P_E=0\).  Otherwise, let \(w_{\min}\) be the first root in the chosen ordering.  Its degree is \(\delta_E\), since a shortest connection is primitive.
		
		If \(w_{\min}\) is not selected, every selected root has \(w_{\min}\) as a distinct predecessor.  Hence \(P_E\geq S_E\), and therefore \(S_E-P_E\leq0\leq \mathbf{1}_{P^+(\delta_E)\leq B}\).
		
		If \(w_{\min}\) is selected, then \(\mathbf{1}_{P^+(\delta_E)}=1\). 
		Aside from the shortest isogeny \(w_{min}\), every other selected connection isogeny can be paired with \(w_{min}\) in $P_E$.
		Hence \(P_E\geq S_E-1\), which again gives \(\mathbf{1}_{P^+(\delta_E)}\geq S_E-P_E\).
	\end{proof}

	\begin{proposition}\label{prop:lower}
		For every fixed \(0<\eps<1/2\), there is an absolute constant \(c_1>0\) such that, for all sufficiently large \(p\) and all \(B\) satisfying
		\[
		(\log \frac{p}{2})^\eps<u<(\log \frac{p}{2})^{1-\eps},
		\]
		one has
		\begin{equation}\label{eq:lower-final}
			\Prob\bigl(\Pplus(\delta_E)\leq B\bigr)
			\geq\frac{c_1\rho(u)}{(\log p)^3}.
		\end{equation}
	\end{proposition}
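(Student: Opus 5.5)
We take expectations in \Cref{lem:first-hit}. Pointwise $\mathbf{1}_{P^+(\delta_E)\le B}\ge S_E-P_E$, so averaging over the uniform measure on $\Sp$ gives
\[
\Prob\bigl(\Pplus(\delta_E)\leq B\bigr)\geq \EE S_E-\EE P_E .
\]
Two points need care here. First, \Cref{lem:first-hit} is applied with the ordering of primitive connection isogenies of degree at most $X$. Its first root has degree $\delta_E$ only when $\delta_E\le X$. When no primitive connection of degree $\le X$ exists, both sides are zero. When one exists, the minimal connection has degree $\le X$ and is primitive, so the lemma's argument applies verbatim.

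Second, we use the parameters fixed before \Cref{prop:first-moment}: a constant $0<\gamma\le1$, $\lambda=\gamma(\log p)^{-2}$, $X=\floor{N\gamma^{2/3}(\log p)^{-4/3}}$ and $v=\log X/\log B$. Since $X<N<p/4$ for large $p$, the hypotheses of \Cref{prop:primitive-mass}, \Cref{lem:tatuzawa} and \Cref{cor:squarefree-fixed-endpoint} hold for all degrees $\le X$.

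Next we insert the two moment bounds. \Cref{prop:first-moment} gives $\EE S_E\ge a\gamma\rho(v)(\log p)^{-3}$. \Cref{prop:collision} gives $\EE P_E\le b\gamma^2\rho(v)(\log p)^{-3}$. The constants $a,b>0$ depend only on $\eps$, and in particular do not depend on $\gamma$; this independence is what makes the next step work. It holds because $\gamma$ enters only through $\lambda$ and $X^3/p\asymp\lambda^2$, so we should confirm that the implied constants in those proofs do not secretly depend on $\gamma$. Subtracting,
\[
\Prob\bigl(\Pplus(\delta_E)\leq B\bigr)\geq \gamma\,(a-b\gamma)\,\frac{\rho(v)}{(\log p)^3}.
\]
We now fix $\gamma=\min\{1,a/(2b)\}$. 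Then $a-b\gamma\ge a/2$, and the right-hand side is at least $\tfrac{a\gamma}{2}\rho(v)(\log p)^{-3}$.

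Finally we pass from $\rho(v)$ to $\rho(u)$. Because $X\le N\le N_0$, we have $v\le u$. Since $\rho$ is nonincreasing, $\rho(v)\ge\rho(u)$, which is exactly the last inequality recorded in \Cref{prop:first-moment}. This gives \eqref{eq:lower-final} with $c_1=a\gamma/2$, a constant depending only on $\eps$.

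The range hypothesis $(\log\frac p2)^\eps<u<(\log\frac p2)^{1-\eps}$ is used only to invoke the two propositions. "Sufficiently large $p$" means taking the larger of the thresholds from those propositions, once $\gamma$ is fixed. The one subtle point is the order of choices: $a$ and $b$ come first, then $\gamma$, and only then the largeness threshold $M$. The propositions allow this because they hold for every fixed $\gamma\in(0,1]$.
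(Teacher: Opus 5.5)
Your proposal is correct and follows the paper's proof essentially verbatim. You take expectations in \Cref{lem:first-hit}, combine \Cref{prop:first-moment} and \Cref{prop:collision} with $\gamma=\min\{1,a/(2b)\}$, and finish with $\rho(v)\ge\rho(u)$ to get $c_1=a\gamma/2$; subtracting the two bounds directly instead of going through \eqref{eq:collision-ratio} is the same computation, and your reading that $c_1$ depends on $\eps$ rather than being ``absolute'' is the accurate one, consistent with \Cref{thm:main}.
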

	
	\begin{proof}
		Let \(\gamma=\min\left\{1,\frac{a}{2b}\right\}\), where \(a\) and \(b\) are the constants in \Cref{prop:first-moment,prop:collision}.  Then \eqref{eq:collision-ratio} gives for sufficiently large $p$,
		\[
		\EE P_E\leq\frac12\EE S_E.
		\]
		Taking expectations in \eqref{eq:first-hit} and applying \eqref{eq:SE-lower},
		\begin{align*}
			\Prob\bigl(\Pplus(\delta_E)\leq B\bigr)
			&=\EE [\mathbf{1}_{P^+{\delta_E}\leq B}]
			\geq\EE S_E-\EE P_E
			\geq\frac12\EE S_E
			\geq\frac{a\gamma}{2}\frac{\rho(v)}{(\log p)^3}.
		\end{align*}
		Because \(v\leq u\) and \(\rho\) is decreasing, \(\rho(v)\geq\rho(u)\).  Taking
		\(c_1=\frac{a\gamma}{2}\) proves \eqref{eq:lower-final}.
	\end{proof}

	
	\section{Probabilistic Consequences and Cryptanalytic Applications}\label{sec:Impact}
	
	\subsection{Proof of Wesolowski's heuristic}
	\begin{corollary}\label{cor:log-law} Heuristic W is true. 
	\end{corollary}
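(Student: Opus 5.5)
The plan is to read Heuristic W in the form its statement suggests: both sides of ``$\approx$'' are $u^{-u(1+o(1))}$, uniformly as $p\to\infty$ in the range $(\log p)^{\eps}<u<(\log p)^{1-\eps}$. Concretely, I will show
\[
\frac{\log \Prob\bigl(P^+(\delta_E)<B\bigr)}{\log\rho(u)}\longrightarrow 1,
\qquad\text{hence}\qquad
\Prob\bigl(P^+(\delta_E)<B\bigr)=\rho(u)^{1+o(1)}=u^{-u(1+o(1))}.
\]
The main input is \Cref{thm:main} (i.e.\ \Cref{prop:upper} and \Cref{prop:lower}), combined with de Bruijn's asymptotic \eqref{eq:dickman-asymptotic}.

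First I reconcile the hypotheses. Heuristic W uses the range $(\log p)^\eps<u<(\log p)^{1-\eps}$ and the event $P^+(\delta_E)<B$, whereas \Cref{thm:main} uses $\log(p/2)$ and $P^+\le B$. Since $\log(p/2)=\log p+O(1)$, a fixed $\eps$ in one range is covered by $\eps/2$ in the other for large $p$. For the strict inequality, I would replace $B$ by $B'=B-1$. Then $P^+(n)<B$ is equivalent to $P^+(n)\le B'$ when $B$ is an integer, and for real $B$ it is equivalent to $P^+(n)\le\lceil B\rceil-1$. The corresponding $u'=\log(p/2)/(3\log B')$ satisfies $u'=u(1+O(1/(B\log B)))$.

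Next I bound $\rho(u')$ against $\rho(u)$. From the differential equation, $-\rho'(v)/\rho(v)=\rho(v-1)/(v\rho(v))\le \log(v\log v)+O(1)$ for large $v$. Integrating over $[u,u']$, which has length $o(1)$, gives $\rho(u')=\rho(u)(1+o(1))$.

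With this, \Cref{thm:main} yields
\[
\log\Prob = \log\rho(u) + O(\log\log p).
\]
By \eqref{eq:dickman-asymptotic}, $-\log\rho(u)=u\log u\,(1+o(1))$. Since $u>(\log p)^\eps$, we have $u\log u\ge \eps(\log p)^\eps\log\log p$, which dominates $O(\log\log p)$. So the ratio tends to $1$ uniformly, giving $\Prob=\rho(u)^{1+o(1)}=u^{-u(1+o(1))}$.

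The obstacle I expect is mostly bookkeeping. The substantive uniformity is already inherited from \Cref{thm:main}. What needs care is the $o(1)$ in the $\rho$-shift: the bound on $\rho(v-1)/\rho(v)$ must hold uniformly for large $v$. I would obtain this from the standard estimate $\rho(v-1)/\rho(v)\ll v\log v$, or avoid the shift entirely by noting that monotonicity already sandwiches $\rho(u')$ between $\rho(u)$ and $\rho(u+1)$, and $\log\rho(u+1)=\log\rho(u)\,(1+o(1))$.
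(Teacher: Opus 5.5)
Your proposal is correct and follows the paper's argument. Like the paper, you take logarithms in the two-sided bound of \Cref{thm:main} and absorb the $O(\log\log p)$ error into $-\log\rho(u)=u\log u\,(1+o(1))$ using $u>(\log p)^{\eps}$; your extra bookkeeping (replacing $P^+(\delta_E)<B$ by $P^+(\delta_E)\le B''$ with $B''\in[B-1,B]$, and converting between the $\log p$ and $\log(p/2)$ ranges) handles a mismatch that the paper's proof passes over, since \Cref{prop:upper,prop:lower} are stated for the event $P^+(\delta_E)\le B$.
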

	
	\begin{proof}
		Taking logarithms in \eqref{eq:main-bounds} gives
		\begin{equation}\label{eq:log-rho-error}
			\log \rho(u)+\log c_1-3\log\log p\leq\log \Prob\bigl(P^+(\delta_E)<B\bigr)
			\leq \log\rho(u)+\log c_2+\log \log p.
		\end{equation}
		Since $u>  \log (p/2)^{\eps}$, we have: 
		\[\frac{\log\log p}{u\log u}
		<\frac{\log\log p}{\log (p/2)^{\eps}}\longrightarrow 0\text{ as }p \to \infty.\]
		So for any constant $C$ there exists $p_0$, for all $p>p_0$
		\[
		\frac{\log\log p}{u\log u}<C.
		\]
		Combining this with \eqref{eq:dickman-asymptotic} proves what we want. 
	\end{proof}

	By \Cref{cor:log-law}, Wesolowski's heuristic follows from our result. Consequently, the \(p^{1/3+o(1)}\) attack no longer relies on this heuristic assumption and admits an unconditional justification.

	\subsection{A higher success probability}
	\label{subsec:Higher-success-probability}
	In the preceding analysis, \(P_E\) was used to restrict attention to connecting isogenies of minimal degree \(\delta_E\).
	The \(p^{1/3}\)-attack of~\cite{Wesolowski2026}, however, does not
	require the connecting isogeny recovered by the attack to have minimal
	degree. The attack first enumerates all isogenies emanating from \(E\)
	whose degrees are \(B\)-smooth and at most \(B^{1/2}\left(\frac{p}{2}\right)^{1/6}\).
	
	Consequently, the attack also succeeds whenever there exists a
	connecting isogeny \(\varphi\colon E\longrightarrow E^{(p)}\)
	such that \(\delta_E < \deg(\varphi)
	\leq B\left(\frac{p}{2}\right)^{1/3}\) and \(\Pplus(\deg(\varphi))\leq B\).
	Thus, the attack is not restricted to the case in which the minimal
	connecting degree \(\delta_E\) is \(B\)-smooth.
	
	More precisely, the relevant success event is not merely \(\Pplus(\delta_E)\leq B\),
	but rather
	\[
	\left\{
	\exists\,\varphi\colon E\to E^{(p)}
	\;:\;
	\Pplus(\deg(\varphi))\leq B
	\ \text{and}\
	\deg(\varphi)\leq
	B\left(\frac{p}{2}\right)^{1/3}
	\right\}.
	\]
	Accordingly, the success probability is
	\[
	\Prob\left(
	\exists\,\varphi\colon E\to E^{(p)}
	\;:\;
	\Pplus(\deg(\varphi))\leq B
	\ \text{and}\
	\deg(\varphi)\leq
	B\left(\frac{p}{2}\right)^{1/3}
	\right).
	\]
	
	The Frobenius-claw event considered in~\cite{Mamah2026} corresponds
	precisely to this broader event. Therefore, after disregarding
	finite \(p\) deviations from the limiting distribution, the success
	probability of the claw attack is at least \(\Prob\bigl(\Pplus(\delta_E)\leq B\bigr),
	\)
	and may be strictly larger because the attack can also exploit
	non-minimal connecting isogenies.
	
	To capture this larger success event, we may omit the restriction
	encoded by \(P_E\) and work directly with \(S_E\). Indeed, \(S_E\)
	counts the connecting isogenies whose degrees are \(B\)-smooth and lie
	in the interval \([X/2,X]\). The previous choice of
	\(\cA(X,B)\) was tailored to facilitate the analysis of \(P_E\).
	Since minimality is no longer required, we instead introduce the
	cumulative set
	\[
	\cA_{\leq}(X,B)
	:=
	\left\{
	n\in\Z_{\geq 2}
	:
	n\leq X,\;
	\mu^2(n)=1,\;
	\Pplus(n)\leq B
	\right\}.
	\]
	Repeating the preceding argument with \(\cA(X,B)\) replaced by
	\(\cA_{\leq}(X,B)\) yields the following sharper lower bound for the
	success probability.

	As a consequence, combining with \Cref{thm:squarefree-pair-bound}, we have 
	\begin{equation}
		C_p(m,n)=O\left( \sqrt{mn}(1+\log 2mn)^2\right)
	\end{equation}
	Let \(S_E^{\leq}
	:=
	\sum_{n\in\mathcal A_{\leq}(X,B)}
	r_E^\circ(n)\). 
	
	
	\begin{proposition}
		Denote $\Prob(A)$ the probability of existing isogeny $\varphi\colon E\to E^{(p)}$ satisfying that $\deg \varphi\leq B(p/2)^{1/3}$ and $\deg \varphi$ is $B$-smooth.Then  \begin{equation}\label{eq:probofall}
			\Prob(A) \ge  \bigl(\frac{4\sqrt{2}\cdot 0.655 e^{-2}}{\pi\zeta(2)} +o_{\eps}(1) \bigr)\frac{\rho(u)}{\log p}.
		\end{equation}
	\end{proposition}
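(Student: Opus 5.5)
The plan is a truncated second-moment (Bonferroni) argument. I would apply it to $S_E^{\leq}$ with the cutoff $X=N$, which is admissible because $N\leq B(p/2)^{1/3}$. The constant in \eqref{eq:probofall} should come out of the first moment alone, so the work is to get $\EE S_E^{\leq}$ asymptotically and show the pair term is of smaller order.

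\emph{Step 1 (pointwise inequality).} If $S_E^{\leq}\geq1$, then $E$ admits a primitive connection isogeny of squarefree $B$-smooth degree at most $N$, so the event $A$ holds. The elementary inequality $\mathbf 1_{\{s\geq1\}}\geq s-\binom{s}{2}$ for integers $s\geq0$ gives, after averaging over $\Sp$,
\[
\Prob(A)\;\geq\;\EE S_E^{\leq}-\frac{1}{2h_p}\sum_{m,n\in\cA_{\leq}(N,B)}C_p(m,n),
\]
because $\sum_E S_E^{\leq}(S_E^{\leq}-1)=\sum_{m,n}C_p(m,n)$ by \eqref{eq:Cp-def}.

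\emph{Step 2 (first moment).} I would use \Cref{prop:primitive-mass} together with \Cref{lem:tatuzawa} applied dyadically, with the bound $A_p^{\BH}(n)>\frac{0.655e^{-2}}{\pi\log p}\sqrt{pn}$ holding off one exceptional $n$ per dyadic shell. There are only $O(\log p)$ exceptional $n$, each contributing $O(\sqrt{pN}\log p/p)$, which is negligible against the main term since $\log(N\rho(u))\gg\log p$. Together with $h_p=\frac{p}{12}(1+o(1))$ this gives
\[
\EE S_E^{\leq}\geq\frac{12\cdot0.655e^{-2}}{\pi\sqrt p\log p}\sum_{n\in\cA_{\leq}(N,B)}\sqrt n\,(1+o(1)).
\]
The key input is the asymptotic
\[
\#\{n\leq Y:\mu^2(n)=1,\ \Pplus(n)\leq B\}=\frac{Y\rho(\log Y/\log B)}{\zeta(2)}(1+o_\eps(1)),
\]
uniformly for $Y\in[N/\log p,N]$. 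Partial summation then gives $\sum\sqrt n=\frac23N^{3/2}\rho(u)/\zeta(2)\,(1+o(1))$, using that $\rho(\log Y/\log B)=\rho(u)(1+o(1))$ on this range of $Y$ (this follows from $\rho(u-w)/\rho(u)\approx(u\log u)^{w}$ with $w\ll\log\log p/\log B$). Substituting $N^{3/2}=\sqrt{p/2}\,(1+o(1))$ gives exactly the main term $\frac{4\sqrt2\cdot0.655e^{-2}}{\pi\zeta(2)}\frac{\rho(u)}{\log p}$.

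\emph{Step 3 (pair term).} Every $n\in\cA_{\leq}$ is squarefree, so \Cref{thm:squarefree-pair-bound} combined with \Cref{lem:uniform-divisor-sum} for $M=mn$ gives $C_p(m,n)=O(\sqrt{mn}\,(\log p)^2)$. Summing over $m,n\leq N$ in the smooth set and using \Cref{lem:Psi-uniform} gives a total of $O\bigl((N^{3/2}\rho(u))^2(\log p)^2\bigr)=O(p\,\rho(u)^2(\log p)^2)$, hence a contribution $O(\rho(u)^2(\log p)^2)$ after dividing by $h_p$. Relative to the main term this is $O(\rho(u)(\log p)^3)=o(1)$, since $\rho(u)=u^{-u(1+o(1))}$ with $u>(\log p)^\eps$ decays faster than any power of $\log p$.

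\emph{Main obstacle.} I expect the hard step to be the squarefree-smooth asymptotic with the exact factor $1/\zeta(2)$ in Step 2. \Cref{lem:squarefree-shell} only gives $\asymp$, and the constant in \eqref{eq:probofall} requires the precise density. I would obtain it by writing $\mu^2(n)=\sum_{d^2\mid n}\mu(d)$, so that the count becomes $\sum_{d\leq\sqrt Y,\,\Pplus(d)\leq B}\mu(d)\Psi(Y/d^2,B)$. For small $d$ I would use Hildebrand's uniform estimate $\Psi(Y/d^2,B)=\frac{Y}{d^2}\rho(u-2\log d/\log B)(1+o(1))$ and the fact that $\rho(u-w)/\rho(u)\to1$ uniformly for $w=o(1/\log u)$. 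I would then bound the tail $d>(\log p)^{C}$ trivially by $\Psi(Y/d^2,B)\ll Y/d^2\cdot\rho(u)(\log Y)^{O(1)}$, which is still summable against $\rho(u)$.
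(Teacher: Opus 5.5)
Your proposal is correct in outline and shares the paper's skeleton: the inequality $\mathbf 1_{\{s\ge1\}}\ge s-\binom s2$, and the pair bound $C_p(m,n)=O(\sqrt{mn}(\log p)^2)$ from \Cref{thm:squarefree-pair-bound} and \Cref{lem:uniform-divisor-sum}, which make the pair term relatively $O(\rho(u)(\log p)^3)=o(1)$. Where you differ is the first moment.

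The paper lower-bounds $\EE S_E^{\le}$ by keeping only the top dyadic shell $\cA(X,B)$. It uses $\sqrt n\ge\sqrt{X/2}$ there together with the $\asymp$ of \Cref{lem:squarefree-shell}. This is simpler and reuses \Cref{lem:tatuzawa} verbatim. However, it only gives $\Prob(A)\ge C_\eps\rho(u)/\log p$ with an unspecified constant, and that is in fact what the paper's proof concludes. Even with the sharp count $\#\cA(X,B)\sim\Psi(X,B)/(2\zeta(2))$ from the appendix, the shell route yields $3\cdot0.655e^{-2}/(\pi\zeta(2))$. Your partial summation over all of $\cA_{\le}(N,B)$ with exact density $1/\zeta(2)$ gives $\frac{12}{\pi}\cdot\frac23\cdot\frac1{\sqrt2}=\frac{4\sqrt2}{\pi}$. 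So your route is the one that actually produces the constant in \eqref{eq:probofall}.

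One step in your sketch of that density needs repair. The tail bound $\Psi(Y/d^2,B)\ll \frac{Y}{d^2}\rho(u)(\log Y)^{O(1)}$ is false for large $d$. For example, if $Y/d^2\le B$ then $\Psi(Y/d^2,B)=\lfloor Y/d^2\rfloor$, while $\rho(u)(\log Y)^{A}\to0$ for every fixed $A$, as you yourself use in Step 3.

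The genuinely trivial bound $\Psi(Y/d^2,B)\le Y/d^2$ does not rescue this. It would force the cutoff $D\ge1/\rho(u)$, and then your step $\rho(u-w)/\rho(u)\to1$ fails for $u$ near $(\log p)^{1-\eps}$, since $w\log u\approx u^2(\log u)^2/\log N\not\to0$ when $\eps<1/2$.

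You can fix it in either of two ways:
\begin{itemize}
\item Use the uniform estimate $\Psi(x/d,y)\ll d^{-\alpha(x,y)}\Psi(x,y)$ (de la Bret\`eche--Tenenbaum). Here $1-\alpha=O(\log\log p/(\log p)^{\eps})$, so $\sum_{d>D}\Psi(Y/d^2,B)\ll D^{-1+o(1)}\Psi(Y,B)$ for $D=(\log p)^C$.
\item Invoke directly the Ivi\'c--Tenenbaum asymptotic $\Psi_1(Y,B;\mu^2)=\zeta(2\beta)^{-1}\Psi(Y,B)(1+o(1))$ with $\beta\to1$, exactly as in the appendix proof of \Cref{lem:squarefree-shell}. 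That argument works for every $Y\in[N/\log p,N]$, not just at the shell endpoints.
\end{itemize}
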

	\begin{proof}
		Since
		\(\mathbf{1}_{\{s\ge 1\}} \ge s - \binom{s}{2}\) is right for all integer,
		\[
		\mathbb P(A) \ge \Prob(S_E^{\leq} \ge 1) 
		\ge \EE[S^{\leq}_E] - \EE\left[\binom{S_E^{\leq}}{2}\right].\]
		Since every $n\in\mathcal A_{\le}(X,B)$ is
		squarefree, \Cref{thm:squarefree-pair-bound} and \Cref{lem:uniform-divisor-sum} give
		\[
		C_p(m,n)=O\left(
		\sqrt{mn}\bigl(1+\log(2mn)\bigr)^2\right)
		\]
		for all $m,n\in\mathcal A_{\le}(X,B)$. Let \[R_{p,B,X}=\frac{\EE\left[\binom{S_E^{\leq}}{2}\right]}{\EE[S^{\leq}_E]}\]
		If $R_{p,B,X}\to 0$ as $p\to \infty$, we have $\Prob(A) \geq (1-o(1))\EE[S_E^{\geq}]$. Choose $X=N-1$ we have \eqref{eq:probofall} .
		
		Now we prove the limitation $R_{p,B,X}$ tends to $0$:
		
		
		
		

		\begin{align*}
			\EE\left[\binom{S_E^\le}{2}\right]
			&=
			\frac{1}{h_p}
			\left(
			\sum_{\substack{m,n\in\mathcal A_\le(X,B)\\m<n}}
			C_p(m,n)
			+\frac12\sum_{n\in\mathcal A_\le(X,B)}C_p(n,n)
			\right)\\
			&\leq
			\frac{C_1(1+\log(2X))^2}{h_p}
			\left(
			\sum_{n\in\mathcal A_\le(X,B)}\sqrt n
			\right)^2\\
			&\leq
			C_2\frac{X^3}{p}\rho(v)^2(\log p)^2,
		\end{align*}
		where $C_1,C_2>0$ are absolute constants.
		
		On the other hand, since
		$\cA(X,B)\subseteq\mathcal A_\le(X,B)$,
		\begin{align*}
			\EE[S_E^\le]
			&=
			\frac{1}{h_p}
			\sum_{n\in\mathcal A_\le(X,B)}
			A_p^{\mathrm{BH}}(n)\\
			&\geq
			\frac{1}{h_p}
			\sum_{n\in\cA(X,B)}
			A_p^{\mathrm{BH}}(n)\\
			&\geq
			\frac{0.655e^{-2}}{\pi h_p\log p}
			\sqrt{\frac{pX}{2}}
			\bigl(\#\cA(X,B)-1\bigr)\\
			&\geq
			C_3\frac{X^{3/2}\rho(v)}
			{\sqrt p\log p},
		\end{align*}
		where $C_3>0$ is
		an absolute constant. The last inequality
		follows from \Cref{lem:squarefree-shell} and $h_p\asymp p$.
		
		Combining with the upper bound of $\EE\left[\binom{S_E^{\leq}}{2}\right]$, we obtain
		\begin{equation}\label{eq:R-bound}
			R_{p,B,X}
			=
			O\left(
			\frac{X^{3/2}}{\sqrt p}\,
			\rho(v)(\log p)^3
			\right).
		\end{equation}
		
		Now choose $X=N-1$, where $N=\lfloor(p/2)^{1/3}\rfloor$.
		Then \(\frac{X^{3/2}}{\sqrt p}
		\longrightarrow\frac1{\sqrt2}\).
		Since \(
		\log\rho(v)=-(1+o(1))v\log v\) and $v\geq (\log \frac{p}{2})^\varepsilon/2$, we have \(\rho(v)(\log p)^3=o(1)\).
		Consequently, \(R_{p,B,X}=o(1)\).
		
		Therefore,
		\begin{align*}
			\Prob(A)
			&\geq \Prob(S_E^\le\geq1)\\
			&\geq
			\EE[S_E^\le]
			-
			\EE\left[\binom{S_E^\le}{2}\right]\\
			&=(1-R_{p,B,X})\EE[S_E^\le]\\
			&\geq
			C_\varepsilon\frac{\rho(u)}{\log p},
		\end{align*}
		for some constant $C_\varepsilon>0$
		depending only on $\varepsilon$.
		Here we used $\rho(v)\geq\rho(u)$,
		since $v\leq u$.
	\end{proof}
	As a result, the success probability satisfies
	\(\Prob(A)\geq C_{\varepsilon}\frac{\rho(u)}{\log p}\), 
	for some constant \(C_{\varepsilon}>0\) depending only on \(\varepsilon\).

	
	\newcommand{\etalchar}[1]{$^{#1}$}
	\providecommand{\bysame}{\leavevmode\hbox to3em{\hrulefill}\thinspace}
	\providecommand{\MR}{\relax\ifhmode\unskip\space\fi MR }
	\providecommand{\MRhref}[2]{%
		\href{http://www.ams.org/mathscinet-getitem?mr=#1}{#2}
	}

\providecommand{\href}[2]{#2}
\providecommand{\etalchar}[1]{$^{#1}$}
\providecommand{\MR}[1]{MR~#1}

\newpage
\appendix
\section{Proof of the lemmas in Section~\ref{sec:notation}.}\label{ap:proofoflemmas}

	\begin{proof}[Proof of Lemma~\ref{lem:Psi-uniform}] 
	Hildebrand's uniform estimate\cite[Theorem~1, pp.~289--290]{Hildebrand1986} states that for each fixed \(\eta>0\),
	\[ 
	\frac{\Psi(x,y)}{x}
	=
	\rho(v)
	\left\{
	1+
	O_\eta\left(
	\frac{\log(v+1)}{\log y}
	\right)
	\right\}\]
	where \(v=\frac{\log x}{\log y}\), whenever
	\(2\leq y\leq x\) and \(\log y\geq(\log\log x)^{5/3+\eta}\).

	We apply this result with
	\(x=N_0,
	y=B,
	v=u,\eta=\frac{\eps}{2}\). To apply correctly, we first prove $\log B\ge (\log\log N_0)^{5/3+\eta}$. Since
	\(\log\log N_0
	=
	\log\log p+O(1)\), and $\log B= \frac{\log N_0}{u}\ge C_{0}\frac{\log p}{(\log p)^{1-\eps}}= C_0(\log p)^\eps$ for some $C_0$,  we have that there exists constant $C_0'$ 
	\[
	\frac{\log B} 
	{(\log\log N_0)^{5/3+\eps/2}} \ge \frac{C_0(\log p)^\varepsilon}
	{(\log\log N_0)^{5/3+\eps/2}}=\frac{C_0'(\log p)^\varepsilon}
	{(\log\log p)^{5/3+\eps/2}}
	\longrightarrow\infty \text{ as }p\to \infty
	\]
	
	Therefore, we apply this result and obtain 
	\[
	\frac{\Psi(N_0,B)}{N_0}
	=
	\rho(u)
	\left\{
	1+
	O_\varepsilon\left(
	\frac{\log(u+1)}{\log B}
	\right)
	\right\}.\]

	Since \(N=\lfloor N_0\rfloor\) we have
	\(\Psi(N,B)=\Psi(N_0,B)\). Then we obtain
	\[
	\frac{\Psi(N,B)}{N}
	=
	\frac{N_0}{N}
	\frac{\Psi(N_0,B)}{N_0}
	=
	\rho(u)
	\left\{
	1+
	O_\varepsilon\left(
	\frac{\log(u+1)}{\log B}
	+
	\frac1N
	\right)
	\right\}.\] 
	Using the upper bound \(u<(\log p)^{1-\varepsilon}\) and $\log B\ge C_0(\log p)^{\eps}$, we obtain as $p\to \infty$:  
	\[  \frac{\log(u+1)}{\log B}
	\leq \frac{(1-\eps)\log \log p}{C_0(\log p)^{\eps}}
	\longrightarrow0. \]
\end{proof}

	\begin{proof}[Proof of Lemma~\ref{lem:squarefree-shell}] 
	For counting $\cA(X,B)$, we construct the following $\Psi_1$ counting $B$-smooth squarefree integers in \([1,x]\): 
	\[
	\Psi_1(x,B;\mu^2)=\sum_{\substack{n\le x\\ P^+(n)\le B}} \mu(n)^2.
	\]
	Then $\#\cA(X,B)=\Psi_1(X,B;\mu^2)-\Psi_1(\frac{X}{2},B;\mu^2)$.
	
	From the theorem of Ivić and Tenenbaum\cite[Theorem~1 and Corollary~(2.12)]{IvicTenenbaum1986} , we have
	\[
	\Psi_1(x,B;\mu^2)=\zeta(2\beta(x,B))^{-1}\Psi(x,B)\left\{1+O_\delta\left(\frac{\log\log B}{\log B}\right)\right\},
	\]
	where \(\beta(x,B)=1-\frac{\xi(v_x)}{\log B}\) with $v_x=\frac{\log x}{\log B}$ and \(\xi(w)\) is determined by
	\(e^{\xi(w)}=1+w\xi(w)\).

	The standard estimate \(\xi(w)=O(\log(w+1))\) gives \(1-\beta(x,B)=O\left(\frac{\log(v_x+1)}{\log B}\right)\). That is to say $\beta(x,B)=1+O\left(\frac{\log (v_x+1)}{\log B}\right)$. By definition \(v_x=\frac{\log x}{\log B}\leq \frac{\log X}{\log 2}\)
	and therefore \(\log(v_x+1)=O(\log\log X)\). By condition \(\log B\geq(\log\log X)^{5/3+\delta}\), we have
	\[
	\frac{\log(v_x+1)}{\log B}=O\left(
	(\log\log X)^{-2/3-\delta}\right)=o(1).
	\]
	Thus \(\beta(x,B)\to 1\) uniformly in the stated parameter range. We only need the value of $\Psi_1$ at $x=X$ and $x=\frac{X}{2}$, consider $x=X$ and $x=\frac{X}{2}$ specially, we have $\beta(X,B)\to1$ and $\beta(\frac{X}{2},B)\to 1$. 
	
	Hence, we obtain
	\[
	\Psi_1(X,B;\mu^2)=\frac{1}{\zeta(2)}\Psi(X,B);\quad \Psi_1\left(\frac{X}{2},B;\mu^2\right)=\frac{1}{\zeta(2)}\Psi\left(\frac{X}{2},B\right).
	\]
	
	Moreover, the theorem of Hildebrand and Tenenbaum\cite[Theorem~3]{HildebrandTenenbaum1986} yields
	\begin{equation}
		\label{eq:localPsiQuation}
		\frac{\Psi(X/2,B)}{\Psi(X,B)}
		=2^{-\alpha(X/2,B)}
		\left\{1+O\left(\frac1v+\frac{\log B}{B}\right)\right\}=2^{-\alpha(X/2,B)}
		\left\{1+o_{\delta}(1)\right\},
	\end{equation}
	where  \(\alpha(x,B)\) denote the unique positive solution of
	\begin{equation}\label{eq:alpha-def}
		\sum_{\substack{\ell\leq B\\ \ell\ {\rm prime}}}
		\frac{\log\ell}{\ell^{\alpha(x,B)}-1}
		=
		\log x.
	\end{equation}
	The saddle-point estimate\cite[Lemma~2, Equation~(3.5)]{HildebrandTenenbaum1986}
	gives \( 1-\alpha(x,B)=O\left( \frac{\log(v_x+1)}{\log B}\right)\). So \(\alpha(x,B)\to 1\) uniformly as well. As a consequence,
	\[
	\frac{\Psi(X/2,B)}{\Psi(X,B)}=\frac12\left\{1+O\left(\frac1v+\frac{\log B}{B}\right)\right\}=\frac12\{1+o_\delta(1)\}.
	\]

	Therefore
	\[
	\#\mathcal{A}(X,B)=\Psi_1(X,B;\mu^2)-\Psi_1\left(\frac{X}{2},B;\mu\right)=\left(\frac{1}{2\zeta(2)}+o_\delta(1)\right)\Psi(X,B).
	\]
	
	\Cref{lem:Psi-uniform} applies and gives
	\[
	\Psi(X,B)=X\rho(v)\big(1+o_\delta(1)\big),
	\]
	which complete the proof.
\end{proof}

\begin{proof}[Proof of Lemma~\ref{lem:tatuzawa}]          
	Since \(n\) is squarefree and \(n<p\) for $n\in \mathcal{A}(X, B)$, $X<p/4$, the integer \(pn\) is
	squarefree. The quadratic discriminant formula therefore gives
	\eqref{eq:Dn-def}. The map \(n\mapsto D_n\) is injective:
	injectivity within each case is immediate, while equality between
	the two cases would require one squarefree integer to be four
	times another.
	
	Since \(n>1\) and \(X<p/4\), we have
	\begin{equation}\label{eq:Dn-range}
		p<|D_n|\leq4pX<p^2.
	\end{equation}
	
	For any fixed \(0<\eta<1/2\), Tatuzawa's theorem
	\cite[Theorem~2]{Tatuzawa1951} states that, apart from at most
	one exceptional fundamental discriminant \(D<0\), every primitive
	real Dirichlet character \(\chi_D\) with
	\(
	|D|>\max\{e^{1/\eta},e^{11.2}\}\) satisfies
	\(
	L(1,\chi_D)>0.655\,\eta |D|^{-\eta}\).
	
	Set \(\eta=1/\log p\) and assume \(p>e^{11.2}\).
	By \eqref{eq:Dn-range},
	\(|D_n|>p=e^{1/\eta}>e^{11.2}\). 
	Hence, apart from at most one exceptional discriminant,
	\[
	L(1,\chi_{D_n})
	>0.655\,\eta |D_n|^{-\eta}
	>\frac{0.655e^{-2}}{\log p}.
	\]
	Since the map \(n\mapsto D_n\) is injective, the exceptional
	discriminant corresponds to at most one
	\(n\in\mathcal A(X,B)\). This proves
	\eqref{eq:tatuzawa-primitive}.
	
	If \(pn\equiv3\pmod4\), then \(D_n=-pn\), and the character \(\chi_{-4pn}=\chi_{4D_n}\) is induced from \(\chi_{D_n}\).  Hence
	\[
	L(1,\chi_{-4pn})
	=
	L(1,\chi_{D_n})
	\left(1-\frac{\chi_{D_n}(2)}{2}\right)
	\geq\frac12L(1,\chi_{D_n})
	>
	\frac{0.655e^{-2}}{2\log p},
	\]
	as required.
\end{proof}


\begin{proof}[Proof of Lemma~\ref{lem:uniform-divisor-sum}]
	For any positive integer $k$, pairing each divisor $d\mid k$ with $k/d$ yields
	\[
	\tau(k)\leq
	2\sum_{\substack{d\mid k\\d\leq\sqrt{k}}}1.
	\]
	When $t^2<4M$, we have $0<4M-t^2\leq 4M$. We may therefore widen the divisor range and interchange the order of summation to obtain
	\begin{align}
		\sum_{\substack{t\in\mathbb Z\\t^2<4M}}
		\tau(4M-t^2)
		&\leq
		2\sum_{\substack{t\in\mathbb Z\\t^2<4M}}
		\sum_{\substack{d\mid(4M-t^2)\\d\leq\sqrt{4M}}}1
		\notag\\
		&=
		2\sum_{d\leq\sqrt{4M}}
		\#\left\{
		t\in\mathbb Z:
		|t|<\sqrt{4M},\
		t^2\equiv4M\pmod d
		\right\}.
		\label{eq:divisor-reduction}
	\end{align}
	
	For each positive integer $d$, let
	\[
	\nu_{4M}(d)
	:=
	\#\{x\bmod d:x^2\equiv 4M\pmod d\}.
	\]
	Each residue class modulo $d$ contains at most
	$2\sqrt{4M}/d+1$ integers in the interval
	$[-\sqrt{4M},\sqrt{4M}]$. Since $d\leq\sqrt{4M}$,
	we have
	\[
	\begin{aligned}
		&\#\left\{
		t\in\mathbb Z:
		|t|<\sqrt{4M},\
		t^2\equiv 4M\pmod d
		\right\}\\
		&\qquad\leq
		\left(\frac{2\sqrt{4M}}{d}+1\right)\nu_{4M}(d)
		\leq
		\frac{3\sqrt{4M}}{d}\nu_{4M}(d).
	\end{aligned}
	\]
	Consequently, by \eqref{eq:divisor-reduction},
	\begin{equation}\label{eq:divisor-reduction-2}
		\sum_{\substack{t\in\mathbb Z\\t^2<4M}}
		\tau(4M-t^2)
		\leq
		6\sqrt{4M}
		\sum_{d\leq\sqrt{4M}}\frac{\nu_{4M}(d)}{d}.
	\end{equation}
	
	We next estimate the sum on the right hand side.
	By the Chinese remainder theorem, $\nu_{4M}$
	is multiplicative. We therefore first derive bounds for $\nu_{4M}(q^j)$ for prime $q$, and then assemble these local
	estimates via an Euler product.

	Fix a prime $q$, and write
	$e=v_q(4M)$, i.e. $4M=q^e a$ where $q\nmid a$. We estimate $\nu_{4M}(q^j)$ for each $j\geq1$.
	
	First, suppose that $1\leq j\leq e$.
	Since $q^j\mid 4M$, the congruence
	$x^2\equiv4M\pmod{q^j}$ reduces to
	$x^2\equiv0\pmod{q^j}$, which is
	equivalent to
	$q^{\lceil j/2\rceil}\mid x$.
	Consequently, \(\nu_{4M}(q^j)=q^{\lfloor j/2\rfloor}\).
	
	Now suppose that $j>e$. If $e$ is odd,
	the congruence
	\(x^2\equiv q^e a\pmod{q^j}\) forces $v_q(x^2)=e$, it is impossible because $v_q(x^2)=2v_q(x)$ is even. Hence
	\(\nu_{4M}(q^j)=0\) if $j>e$ and $e$ is odd.
	
	It remains to treat the case
	$e=2k$. Write $j=2k+r$, where $r\geq1$.
	The congruence
	\(
	x^2\equiv q^{2k}a\pmod{q^{2k+r}}\) implies $q^k\mid x$. Setting $x=q^ky$,
	we obtain
	\(
	y^2\equiv a\pmod{q^r}.\) Thus
	\(
	\nu_{4M}(q^{2k+r})
	=q^k\nu_a(q^r)\), where $\nu_a(d)=\#\{y\bmod d:y^2\equiv a\bmod d\}$ 
	
	We next bound $\nu_a(q^r)$.
	Since $q\nmid a$, the polynomial
	$x^2-a$ has at most two roots in
	$\mathbb F_q$. For odd $q$, every
	such root is simple, so
	Hensel's lemma gives
	\(
	\nu_a(q^r)\leq2\) when $q$ odd and $r\geq1$. Therefore,
	\[
	\sum_{r\geq1}\frac{\nu_a(q^r)}{q^r}
	\leq
	2\sum_{r\geq1}q^{-r}
	=\frac{2}{q-1}.
	\]
	
	For $q=2$, since $a$ is odd, we have
	\(
	\nu_a(2)\leq1\) and \(\nu_a(4)\leq2\). When $r\geq3$, the congruence
	$x^2\equiv1\pmod{2^r}$ has at most
	four solutions. Indeed,
	$2^r\mid(x-1)(x+1)$ implies
	\(2^{r-1}\mid(x-1)\) or \(2^{r-1}\mid(x+1),\) because $\gcd(x-1,x+1)=2$ for odd $x$.
	Thus $x\equiv\pm1\pmod{2^{r-1}}$,
	yielding at most four residue classes
	modulo $2^r$.
	
	If $x_0$ solves $x^2\equiv a\pmod{2^r}$, then the congruence is equivalent to $(x_0^{-1}x)^2\equiv 1\pmod{2^r}$.
	Consequently,
	\(
	\nu_a(2^r)\leq4\) when $r\geq3$. It follows that
	\[
	\sum_{r\geq1}\frac{\nu_a(2^r)}{2^r}
	\leq
	\frac12+\frac24+
	4\sum_{r\geq3}2^{-r}
	=2.
	\]
	
	Combining cases above, we obtain
	\begin{equation}\label{eq:unit-root-sum}
		\sum_{r\geq1}\frac{\nu_a(q^r)}{q^r}
		\leq\frac{2}{q-1}
	\end{equation}
	for every prime $q$ with $q\nmid a$.
	
	We put these estimates together.
	If $e=2k$, then
	\begin{align*}
		\sum_{j\geq1}
		\frac{\nu_{4M}(q^j)}{q^j}
		&=
		2\sum_{h=1}^{k}\frac1{q^h}
		+
		q^{-k}\sum_{r\geq1}
		\frac{\nu_a(q^r)}{q^r}\\
		&\leq
		\frac{2(1-q^{-k})}{q-1}
		+
		\frac{2q^{-k}}{q-1}=\frac2{q-1}.
	\end{align*}
	If $e=2k+1$, then
	\begin{align*}
		\sum_{j\geq1}
		\frac{\nu_{4M}(q^j)}{q^j}
		&=
		2\sum_{h=1}^{k}\frac1{q^h}
		+\frac1{q^{k+1}}\\
		&\leq\frac2{q-1}.
	\end{align*}
	Therefore, uniformly in $M$ and $q$,
	\begin{equation}\label{eq:uniform-local-factor}
		1+\sum_{j\geq1}
		\frac{\nu_{4M}(q^j)}{q^j}
		\leq
		1+\frac2{q-1}
		\leq
		(1-q^{-1})^{-2}.
	\end{equation}
	
	By multiplicativity and nonnegativity of
	$\nu_{4M}$, we obtain
	\begin{equation}
		\begin{aligned}
			\sum_{d\leq\sqrt{4M}}\frac{\nu_{4M}(d)}{d}
			&=
			\sum_{d\leq\sqrt{4M}}
			\prod_{q\mid d}
			\frac{\nu_{4M}(q^{v_q(d)})}{q^{v_q(d)}}\\
			&\leq
			\prod_{q\leq\sqrt{4M}}
			\left(
			1+\sum_{j\geq1}
			\frac{\nu_{4M}(q^j)}{q^j}
			\right)\\
			&\leq
			\prod_{q\leq\sqrt{4M}}
			(1-q^{-1})^{-2}\\
			&\leq C_{Mer}\bigl(1+\log\sqrt{4M}\bigr)^2,
		\end{aligned}
	\end{equation}
	where $C_{Mer}>0$ is an absolute constant. The last
	inequality follows from Mertens' product estimate.
	
	Substituting this bound into
	\eqref{eq:divisor-reduction-2}, we conclude that
	\[
	\sum_{\substack{t\in\mathbb Z\\t^2<4M}}
	\tau(4M-t^2)=O\left(
	\sqrt M\bigl(1+\log(2M)\bigr)^2\right),
	\]
	with an absolute constant independent of $M$.
\end{proof}

\begin{proof}[Proof of Lemma~\ref{lem:joint-divisor-sum}] 
		Put
	\(Q(n)=
	\sum_{m\leq n}
	\sum_{t^2<4mn}\tau(4mn-t^2)\).
	For every positive integer \(k\), there is
	\[
	\tau(k)\leq2\sum_{\substack{d\mid k\\d\leq\sqrt{k}}}1.
	\]
	Since \(0<4mn-t^2\leq4n^2\), every divisor occurring on the right is at most \(2n\). 
	\begin{equation}\label{eq:Qn-congruence}
		Q(n)\leq 2\sum_{m\leq n}\sum_{t^2<4mn}\sum_{d\mid 4mn-t^2\\d\leq \sqrt{4mn-t^2}}1\leq2\sum_{d\leq2n}N_d,
	\end{equation}
	where
	\(N_d=
	\#\left\{(m,t):1\leq m\leq n,\ |t|<2n,\ 4nm\equiv t^2\pmod d\right\}\). 
	
	Fix \(d\), and put \(g=(4n,d)\).  The linear congruence in \(m\) is soluble if and only if \(g\mid t^2\).  Define
	\[
	h(g)=\prod_{q^e\parallel g}q^{\ceil{e/2}}.
	\]
	Then \(g\mid t^2\) if and only if \(h(g)\mid t\).  Since \(g\mid4n\) and \(n\) is squarefree, every odd prime occurs in \(g\) to exponent at most one and \(v_2(g)\leq3\).  Hence \(\frac{g}{h(g)}\leq2\).
	It follows that the number of integers \(t\) with \(|t|<2n\) and \(g\mid t^2\) is at most
	\(
	\frac{4n}{h(g)}+1
	\leq\frac{8n}{g}+1
	\leq\frac{10n}{g}\),
	where the last inequality uses \(g\leq d\leq2n\).
	
	For each such \(t\), division by \(g\) gives
	\(
	\frac{4n}{g}m\equiv\frac{t^2}{g}\pmod{d/g}\), 
	and the coefficient \(4n/g\) is invertible modulo \(d/g\).  Thus \(m\) lies in one residue class modulo \(d/g\), so there are at most \(\frac{ng}{d}+1\) choices with \(1\leq m\leq n\).  Therefore
	\begin{align*}
		N_d
		\leq\frac{10n}{g}\left(\frac{ng}{d}+1\right)
		\leq30\frac{n^2}{d},
	\end{align*}
	where we used \(d\leq2n\) in the last step.  Substitution into \eqref{eq:Qn-congruence} gives
	\[
	Q(n)\leq60n^2\sum_{d\leq2n}\frac1d
	\leq60n^2\bigl(1+\log(2n)\bigr).
	\]
\end{proof}

\section{Mamah's heuristic}\label{ap:Mhuristic}
Mamah~\cite{Mamah2026} stated another heuristic in his analysis of the effectiveness of Wesolowski's algorithm, this heuristic originally commes from \cite{BooherEtAl2024}: 
\begin{heurm}[Heuristic 1, \cite{Mamah2026}]  For a fixed degree $n$, the probability that a supersingular curve $E$
	admits an $n$-isogeny to its Frobenius conjugate $E^{(p)}$ is
	\[ \Pr(A_{E,n}) \approx \sqrt{\frac{12n}{p}} .  \]	
\end{heurm}

For $1\leq n<p/4$, define
\[
\mathsf A_{E,n}
:=
\{r_E>0\},
\]
where $E$ is sampled uniformly from $\Sp$. Then Heuristic M is \begin{equation}
	\label{eq:their-heuristic-one}
	\Prob(\mathsf{A}_{E,n})
	\approx
	\sqrt{\frac{12n}{p}},
\end{equation}  
This heuristic also consider $m,n$ with square divisors, so we derive a non-squarefree bound for $C_{p}(m,n)$ as below:

\begin{lemma}
    \label{lem:general-mixed-pair-bound}
    Let $p\geq5$ be prime and let $1\leq m,n<p/4$.
    Put
    \[
    s(k):=\prod_{\substack{q\mid k\\q\text{ prime}}}
    q^{\lfloor v_q(k)/2\rfloor},
    \]
    so that $s(k)^2$ is the largest square divisor of $k$.
    Then
    \begin{equation}\label{eq:general-mixed-pair-bound}
        C_p(m,n)
        =
        O\left(
        \sqrt{mn}\,s(\gcd(m,n))
        \bigl(1+\log(2mn)\bigr)^2
        \right),
    \end{equation}
    where the implied constant is absolute.
    In particular,
    \[
    C_p(n,n)
    =
    O\left(
    n^{3/2}\bigl(1+\log(2n)\bigr)^2
    \right).
    \]
\end{lemma}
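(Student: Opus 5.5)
The plan is to follow the argument of \Cref{thm:squarefree-pair-bound} and generalise only the local count at the primes dividing $\gcd(m,n)$. Fix an ordered distinct pair of primitive connection isogenies of degrees $m,n$, with associated integer $t$ and order $R=\Z[1,i,j,ij]$ satisfying \eqref{eq:ij-relations} and $D=4mn-t^2>0$. By \eqref{eq:Cp-sum-t} and \eqref{eq:pairs-Nmax},
\[
C_p(m,n)\leq 48\sum_{t^2<4mn}N_{\max}(R_t).
\]
By \Cref{lem:local-global-overorders}, $N_{\max}(R)=\prod_q\#\cB_q(R)$, with $\#\cB_p(R)=1$. So everything reduces to a local bound at primes $q\neq p$.

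At such a $q$ we keep the diameter bound $\operatorname{diam}\cB_q(R)\leq e_q=v_q(D)$ as before. For the branch containment we use whichever of $i,j$ has the smaller $q$-adic conductor. The conductor $f$ of $\Z_q[i]$ satisfies $2f\leq v_q(4pm)$, so $f\leq\lfloor v_q(4m)/2\rfloor$, and similarly for $j$. Choosing the better of the two gives
\[
f_q\leq \min\{\lfloor v_q(4m)/2\rfloor,\lfloor v_q(4n)/2\rfloor\}\leq \lfloor g_q/2\rfloor+O(1)\quad(\text{the }O(1)\text{ only at }q=2),
\]
where $g_q=\min\{v_q(m),v_q(n)\}$.

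The projection argument from the squarefree case then applies. The projection $\pi_S(T)$ onto the stem has at most $e_q+1$ vertices, and each fiber lies in a ball of radius $f_q$ around a stem vertex. This gives
\[
\#\cB_q(R)\leq (e_q+1)\Bigl(1+(q+1)\sum_{r<f_q}q^r\Bigr)\leq 3(e_q+1)q^{f_q}.
\]
Multiplying over $q$ yields $N_{\max}(R)\ll \tau(D)\,s(\gcd(m,n))\cdot c^{\omega(\gcd(m,n))}$. The factor $3^{\omega}$ is an obstacle to an absolute constant. I would first try to remove it by a sharper fiber count: the branch of $\Z_q+q^f\cO_K$ is exactly the $f$-neighbourhood of $S$, and inside a subtree of diameter $\leq e_q$ one should be able to bound the vertices by $(e_q+1)q^{f_q}$ up to an absolute factor only at $q=2$. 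The idea is to count points at distance exactly $r$ from the stem as $(q-1)q^{r-1}$ per stem vertex, or $q^r$ for an end, and to sum.

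If that fails, I would instead absorb $2^{\omega(g)}$ into $\tau(D)$: the $q$ with $g_q\geq 2$ that contribute divide $D$ with $e_q\geq$ something, so one can replace $(e_q+1)$ by a slightly weighted divisor function whose average over $t$ is still $O(\sqrt{mn}\log^2)$. This is the step I expect to be the main obstacle.

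With $N_{\max}(R)\ll s(\gcd(m,n))\tau(D)$ in hand, we sum over $t$ with $t^2<4mn$ and apply \Cref{lem:uniform-divisor-sum} with $M=mn$:
\[
\sum_{t^2<4mn}\tau(4mn-t^2)=O\bigl(\sqrt{mn}(1+\log 2mn)^2\bigr).
\]
This gives \eqref{eq:general-mixed-pair-bound}. The special case follows from $s(n)\leq\sqrt n$ with $m=n$, which gives $\sqrt{n\cdot n}\,s(n)\leq n^{3/2}$.
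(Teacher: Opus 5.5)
Your reduction matches the paper's: \eqref{eq:pairs-Nmax}, \Cref{lem:local-global-overorders}, $\operatorname{diam}\cB_q(R)\le e_q$, the smaller conductor $f_q\le\lfloor g_q/2\rfloor$ (plus $1$ at $q=2$), projection to the stem, and then \Cref{lem:uniform-divisor-sum} with $M=mn$. However, the one estimate that makes the constant \emph{absolute} is left unproved, as you suspect.

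\textbf{Why the ball bound is not enough.} Bounding each projection fibre by the full ball of radius $f_q$ gives $\#\cB_q(R)\le(e_q+1)\bigl((q+1)q^{f_q}-2\bigr)/(q-1)$. This leaves a surplus factor of up to $\frac{q+1}{q-1}$ at each prime $q\mid s(\gcd(m,n))$, and $\prod_{q\mid s}\frac{q+1}{q-1}$ is unbounded.

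\textbf{Why your refinement does not fix it.} Counting $(q-1)q^{r-1}$ vertices at distance $r$ is correct only over a stem vertex with two stem neighbours, i.e.\ in the split case. When $\Q_q[i]$ is a field, the stem is a single vertex or a single edge. The fibre over a stem vertex then has $\bigl((q+1)q^{f}-2\bigr)/(q-1)$ or $(q^{f+1}-1)/(q-1)$ vertices, again too large by a factor of about $\frac{q+1}{q-1}$ or $\frac{q}{q-1}$.

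\textbf{Why the fallback is not justified.} Absorbing a factor $c>1$ per prime into a weighted divisor function is not carried out, and \Cref{lem:uniform-divisor-sum} does not handle weighted sums. For odd $q\mid s(\gcd(m,n))$ the loss occurs only when $q\mid t$. Even so, on average over $t$ one still expects an extra factor of roughly $\prod_{q\mid s(\gcd(m,n))}(1+(c-1)/q)$, which is unbounded.

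\textbf{The missing idea.} This is exactly how the paper proceeds: enlarge the stem $S$ to an apartment $\Lambda_q\supseteq S$, with $\Lambda_q=S$ in the split case.
\begin{itemize}
\item Then $\cB_q(R)\subseteq N_{f_q}(S)\subseteq N_{f_q}(\Lambda_q)$.
\item Nearest-point projection onto $\Lambda_q$ is $1$-Lipschitz, so the subtree $\cB_q(R)$ projects onto a segment with at most $e_q+1$ vertices.
\item Every vertex of an apartment has exactly $q-1$ neighbours off it, so each fibre of $N_{f_q}(\Lambda_q)$ has exactly $1+(q-1)\sum_{r<f_q}q^r=q^{f_q}$ vertices.
\end{itemize}
Hence $\#\cB_q(R)\le(e_q+1)q^{\lfloor g_q/2\rfloor}$ for $q\ne2,p$, and $\#\cB_2(R)\le2(e_2+1)2^{\lfloor g_2/2\rfloor}$. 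It follows that $N_{\max}(R)\le2\,s(\gcd(m,n))\,\tau(D)$ and $C_{p,t}(m,n)\le96\,s(\gcd(m,n))\,\tau(4mn-t^2)$, with no dependence on $\omega(\gcd(m,n))$. The rest of your argument then goes through as written.

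Alternatively, for a vertex or edge stem you could combine the explicit size of $N_{f_q}(S)$ with the diameter bound in a short case analysis; this also yields $(e_q+1)q^{f_q}$ for odd $q$. Some such argument has to be supplied.
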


\begin{proof}
    Fix an integer $t$ arising from an ordered pair of distinct primitive connection-isogeny orbits
    of degrees $m$ and $n$, respectively, and let $R$ be the associated quaternion order. Put
    \[
    D:=4mn-t^2>0,\qquad
    e_q:=v_q(D),\qquad
    g_q:=\min\{v_q(m),v_q(n)\}.
    \]
    By \Cref{lem:pairs-embeddings}, we bound
    $C_{p,t}(m,n)$ by counting $N_{\max}(R)$.

    At $q=p$, the division algebra
    $(B_{p,\infty})_p$ has a unique maximal order,
    so $\#\cB_p(R)=1$
    \cite[Proposition~13.3.4 and
    Theorem~13.3.11(b)]{Voight2021}.

    Fix a finite prime $q\neq p$. Since
    $B_{p,\infty}$ is ramified only at $p$
    and $\infty$, one has \(
    B_q\simeq M_2(\Q_q)\). The maximal $\Z_q$-orders in $B_q$ are
    naturally identified with the vertices
    of the $(q+1)$-regular Bruhat--Tits tree
    of $\mathbf{PGL}_2(\Q_q)$; see
    \cite[Definition~23.5.5 and
    Proposition~23.5.8]{Voight2021}.
    Under this identification, the set
    $\cB_q(R)$ determines a convex subtree.

    If two vertices in $\cB_q(R)$ are at
    distance $d$, their intersection is an
    Eichler order of level $q^d$.
    Since $R_q$ is contained in this
    intersection, we have
    \(
    d\leq v_q(\operatorname{discrd}(R))=e_q\). Thus
    \[
    \operatorname{diam}\cB_q(R)\leq e_q.
    \]
    In particular, if $e_q=0$, then
    $\cB_q(R)$ contains at most one vertex.

    We next estimate the depth of $\cB_q(R)$.
    Let
    \(
    A_i=\Z_q[i],\ K_i=\Q_q[i]\). Then $K_i$ is either a quadratic field
    extension of $\Q_q$ or the split algebra
    $\Q_q\times\Q_q$.
    Let $\mathcal O_{K_i}$ denote its maximal
    $\Z_q$-order. There exists an integer
    $f_i\geq0$ such that
    \(A_i=\Z_q+q^{f_i}\mathcal O_{K_i}\).
   
    Since $i^2=-pm$, the discriminant-conductor
    formula gives
    \(
    (-4pm)\Z_q
    =
    \operatorname{disc}(A_i)
    =
    q^{2f_i}\operatorname{disc}(\mathcal O_{K_i}).\) 
    Hence
    \[
    2f_i\leq v_q(4pm).
    \]
    Similarly, for
    \(
    A_j=\Z_q[j]
    =\Z_q+q^{f_j}\mathcal O_{K_j}\),
    where $j^2=-pn$, we obtain
    \(
    2f_j\leq v_q(4pn)\).

    Set $f_q:=\min\{f_i,f_j\}$.
    Since $p$ is prime integer, we obtain
    \begin{equation}\label{eq:mixed-fq-bound}
        f_q\leq
        \begin{cases}
            \left\lfloor g_q/2\right\rfloor,
            &q\neq2,p,\\[1ex]
            1+\left\lfloor g_2/2\right\rfloor,
            &q=2.
        \end{cases}
    \end{equation}

    Since $A_i,A_j\subseteq R_q$, we have
    \(
    \cB_q(R)
    \subseteq
    \cB_q(A_i)\cap\cB_q(A_j).
    \) Let $S_i=\cB_q(\mathcal O_{K_i})$ and $S_j=\cB_q(\mathcal O_{K_j})$. The standard branch
    description of quadratic orders
    \cite[\S1]{ArenasCarmonaSaavedra2016}
    gives
    \[
    \cB_q(A_i)\subseteq N_{f_i}(S_i),
    \qquad
    \cB_q(A_j)\subseteq N_{f_j}(S_j),
    \]
    where $N_f(S)$ denotes the closed
    $f$-neighborhood of $S$.
    Consequently, choosing the stem
    corresponding to the smaller conductor
    exponent $f_q$ to be $S$, we obtain
    \[
    \cB_q(R)\subseteq N_{f_q}(S).
    \]

    The stem $S$ has one of three possible
    shapes: an apartment if the corresponding
    quadratic algebra is split, a single
    vertex if it is an unramified quadratic
    field extension, or a single edge if
    it is a ramified quadratic field
    extension; see
    \cite[\S1]{ArenasCarmonaSaavedra2016}.

    We now translate these geometric descriptions
    into a uniform vertex bound.
    If $S$ is an apartment, put $\Lambda_q=S$.
    Otherwise, extend the vertex or edge $S$
    to an apartment $\Lambda_q$.
    In either case, $S\subseteq\Lambda_q$, and
    therefore
    \(
    \cB_q(R)
    \subseteq N_{f_q}(S)
    \subseteq N_{f_q}(\Lambda_q).\)

    Since nearest-point projection onto
    $\Lambda_q$ does not increase distances,
    the projection of $\cB_q(R)$ contains
    at most $e_q+1$ vertices.
    For each vertex of $\Lambda_q$, the
    corresponding projection fiber inside
    $N_{f_q}(\Lambda_q)$ contains
    \[
    1+(q-1)\sum_{r=0}^{f_q-1}q^r=q^{f_q}
    \]
    vertices, with the sum interpreted
    as empty when $f_q=0$.
    Hence
    \[
    \#\cB_q(R)\leq(e_q+1)q^{f_q}.
    \]

    Combining this with \eqref{eq:mixed-fq-bound},
    we obtain
    \[
    \#\cB_q(R)
    \leq
    (e_q+1)q^{\lfloor g_q/2\rfloor}
    \qquad(q\neq2,p),
    \]
    and
    \[
    \#\cB_2(R)
    \leq
    2(e_2+1)2^{\lfloor g_2/2\rfloor}.
    \]

    Since $m,n<p$, we have $p\nmid\gcd(m,n)$.
    The local-global product correspondence yields
    \[
    \begin{aligned}
    N_{\max}(R)
    &=\prod_q\#\cB_q(R)\\
    &\leq
    2\left(
    \prod_{q\neq p}
    q^{\lfloor g_q/2\rfloor}
    \right)
    \left(
    \prod_{q\neq p}(e_q+1)
    \right)\\
    &\leq
    2s(\gcd(m,n))\tau(D).
    \end{aligned}
    \]
    Combining this with
    \Cref{lem:pairs-embeddings}, we obtain
    \[
    \begin{aligned}
    C_{p,t}(m,n)
    &\leq48N_{\max}(R)\\
    &\leq
    96s(\gcd(m,n))
    \tau(4mn-t^2).
    \end{aligned}
    \]
    Summing over all admissible $t$, we have
    \[
    C_p(m,n)
    \leq
    96s(\gcd(m,n))
    \sum_{\substack{t\in\Z\\t^2<4mn}}
    \tau(4mn-t^2).
    \]
    Applying \eqref{eq:uniform-divisor-sum}
    with $M=mn$, we obtain
    \[
    \sum_{\substack{t\in\Z\\t^2<4mn}}
    \tau(4mn-t^2)
    =
    O\left(
    \sqrt{mn}\bigl(1+\log(2mn)\bigr)^2
    \right).
    \]
    Consequently,
    \[
    C_p(m,n)
    =
    O\left(
    \sqrt{mn}\,s(\gcd(m,n))
    \bigl(1+\log(2mn)\bigr)^2
    \right).
    \]
    Finally, taking $m=n$ and using
    $s(n)\leq\sqrt n$, we conclude that
    \[
    C_p(n,n)
    =
    O\left(
    n^{3/2}\bigl(1+\log(2n)\bigr)^2
    \right).
    \qedhere
    \]
\end{proof}

Recall that the number of all connection isogenies bewteen $E$ and $E^{(p)}$ with degree $n$ $r_E(n)=\sum\limits_{d^2\mid n}r^{\circ}_E(\frac{n}{d^2})$. By \Cref{prop:primitive-mass}, we have 
\[A_{p}^{BH}(n)=\sum\limits_{E\in\Sp}r^{\circ}_E(n)=\frac{\sqrt{pn}}{\pi}L(1,\chi_{-4pn})
+
\frac{\sqrt{pn}}{2\pi}\mathbf1_{\{pn\equiv3\pmod4\}}
L(1,\chi_{-pn})\] 
Set
\(
\mathcal L_{p,n}
:=
L(1,\chi_{-4pn})
+
\frac12\mathbf1_{\{pn\equiv3\pmod4\}}
L(1,\chi_{-pn})\) and \(
\mathcal H_{p,n}
:=
\sum\limits_{d^2\mid n}
\frac{\mathcal L_{p,n/d^2}}d\), we have that \[\sum\limits_{E\in\Sp}r^{\circ}_E=A_p^{BH}=\frac{\sqrt{np}}{\pi}\mathcal{L}_{n,p},\quad \sum\limits_{E\in \Sp}r_E(n)=\sum\limits_{E\in \Sp}\sum\limits_{d^2\mid n}r^{\circ}_E\left(\frac{n}{d^2}\right)=\frac{\sqrt{np}}{\pi}\mathcal H_{n,p}.\]

\begin{theorem}
\label{thm:all-connection-occurrence}
Let $p\geq5$ be prime and put
\(
N:=\left\lfloor(p/2)^{1/3}\right\rfloor.
\)
For every $1\leq n\leq N$, we have Heuristic M holds in the logarithmic sense, i.e.



\begin{equation}
\label{eq:all-connection-logarithmic}
\log\Prob(\mathsf A_{E,n})
=
\frac12\log\frac np+O(\log\log p).
\end{equation}
\end{theorem}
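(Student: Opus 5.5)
The proof will be a first-moment upper bound combined with a second-moment (Cauchy--Schwarz) lower bound for the random variable $r_E(n)$ on $\Sp$, with $h_p\asymp p$.

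\textbf{Upper bound.} I will use $\Prob(\mathsf A_{E,n})\le \EE[r_E(n)]=\frac{\sqrt{np}}{\pi h_p}\mathcal H_{p,n}$. By \Cref{lem:L-upper}, each $\mathcal L_{p,k}=O(\log p)$. Hence
\[
\mathcal H_{p,n}\le O(\log p)\sum_{d^2\mid n}\frac1d=O(\log p\,\log\log p).
\]
This gives $\Prob(\mathsf A_{E,n})\le \sqrt{n/p}\,(\log p)^{1+o(1)}$, whose logarithm is $\frac12\log\frac np+O(\log\log p)$.

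\textbf{Lower bound.} I will apply Cauchy--Schwarz:
\[
\Prob(\mathsf A_{E,n})\ \ge\ \frac{(\EE r_E(n))^2}{\EE[r_E(n)^2]}.
\]
For the numerator, $\mathcal H_{p,n}\ge\mathcal L_{p,n}\ge L(1,\chi_{-4pn})$, the $d=1$ term, since all terms are nonnegative. I need a lower bound $L(1,\chi_{-4pn})\gg(\log p)^{-C}$ valid for every $n\le N$, with no exceptional $n$, because $n$ is fixed. Here I will not use \Cref{lem:tatuzawa}, which allows one exception. Instead I will use Siegel-free elementary bounds. The class number formula gives $h(-4pn)\ge1$, so $L(1,\chi_{-4pn})\gg (pn)^{-1/2}$, but that is too weak. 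Better is the ``one exceptional discriminant'' phenomenon relative to a fixed $n$: since $h\ge 1$ only gives polynomial loss, I will instead fall back on Tatuzawa with $\eta=1/\log p$ applied to the single discriminant $D_n$. If $D_n$ happens to be the Tatuzawa exception, I will use $L(1,\chi)\gg |D|^{-1/2}$ only for that single $n$.

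\textbf{Main obstacle.} This exceptional-$n$ issue is the main obstacle. Unconditionally one cannot exclude a Siegel zero for one specific modulus. So I would either accept an $O(\log\log p)$ error only outside one value of $n$, or observe that the Tatuzawa bound $L(1,\chi_D)>0.655\eta|D|^{-\eta}$ fails for at most one $D$ over the \emph{whole} range, and treat that $n$ separately. I expect the written proof to handle the claim ``for every $n$'' this way, possibly needing care here.

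\textbf{Second moment.} I will write $r_E(n)^2=r_E(n)+r_E(n)(r_E(n)-1)$. Expanding $r_E(n)=\sum_{d^2\mid n}r_E^\circ(n/d^2)$, the off-diagonal part is a sum of the counts $C_p(n/d^2,n/d'^2)$, together with terms for coincident primitive classes, which are bounded by $\EE r_E$. By \Cref{lem:general-mixed-pair-bound}, writing $m'=n/d^2$ and $n'=n/d'^2$,
\[
C_p(m',n')=O\bigl(\sqrt{m'n'}\,s(\gcd(m',n'))(\log p)^2\bigr)
\le O\Bigl(\frac{n}{dd'}\sqrt n\,(\log p)^2\Bigr).
\]
Summing over $d,d'$ gives $\sum_E r_E(n)(r_E(n)-1)=O(n^{3/2}(\log p)^{2+o(1)})$. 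Hence
\[
\EE[r_E^2]\ll \EE r_E+\frac{n^{3/2}(\log p)^{3}}{p}.
\]
With $n\le N\asymp p^{1/3}$ we have $n^{3/2}/p\ll\sqrt{n/p}$, so $\EE[r_E^2]\ll\sqrt{n/p}\,(\log p)^{3}$. Dividing, $\Prob(\mathsf A_{E,n})\gg \sqrt{n/p}\,(\log p)^{-C}$. Combined with the upper bound, this gives \eqref{eq:all-connection-logarithmic}.
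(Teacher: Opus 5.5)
Your first-moment upper bound and the Cauchy--Schwarz second-moment computation are fine. The off-diagonal part of $\sum_E r_E(n)^2$ is exactly $\sum_{d^2\mid n,\,d'^2\mid n}C_p(n/d^2,n/d'^2)$, with no extra ``coincident'' terms, and after dividing by $h_p$ it is $\ll\sqrt{n/p}\,(\log p)^3$ because $n\le p^{1/3}$. So your argument yields $\Prob(\mathsf A_{E,n})\gg\sqrt{n/p}\,\mathcal H_{p,n}^2(\log p)^{-3}$, and everything hinges on $\mathcal H_{p,n}\ge(\log p)^{-C}$.

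That is where the genuine gap is, exactly as you suspected. Tatuzawa with $\eta\approx1/\log p$ gives the bound for every $n\le N$ except those $n$ for which $\Q(\sqrt{-pn})$ is the single possible exceptional field. These form one square class $n_0k^2$, so at most one squarefree $n$. For non-squarefree $n$ you also have to pass from $\chi_{-4pn}$ to the primitive character, at a cost $\varphi(f)/f\gg1/\log\log p$. For the exceptional class your fallback $h\ge1$ gives only $L(1,\chi)\gg(pn)^{-1/2}$. That is a loss of order $\log p$ in the logarithm, not $O(\log\log p)$. So as written you prove the estimate for all $n\le N$ outside one square class, not for every $n$.

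The paper argues differently. It uses the Bonferroni bounds $\EE r_E-\EE\binom{r_E}{2}\le\Prob(\mathsf A_{E,n})\le\EE r_E$ with the same pair counts. It removes the exception by Siegel's theorem, $\mathcal L_{p,n}\ge c_\eta p^{-\eta}$ for all $n\le N$. This gives the (ineffective) asymptotic $\Prob(\mathsf A_{E,n})\sim\frac{12}{\pi}\sqrt{n/p}\,\mathcal H_{p,n}$, which is sharper than your Cauchy--Schwarz bound. On the other hand, Bonferroni needs a power-saving lower bound on $\mathcal H_{p,n}$ (about $p^{-1/6}(\log p)^2$ in the worst case) merely to be nontrivial, whereas your inequality degrades gracefully.

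However, the paper's final step, ``since $\eta$ is arbitrary, $\log\mathcal H_{p,n}=O(\log\log p)$'', does not follow. Siegel only gives $\log\mathcal H_{p,n}\ge-\eta\log p-O_\eta(1)$, i.e.\ an error $o(\log p)$. Nor can your gap be closed with current technology. For $n=1$, $\mathsf A_{E,1}$ is the event $j(E)\in\F_p$, and $\Prob(\mathsf A_{E,1})\le\EE r_E(1)\asymp L(1,\chi_{D_1})/\sqrt p$, where $D_1\in\{-p,-4p\}$ is the discriminant of $\Q(\sqrt{-p})$. So the stated estimate with a uniform implied constant would force $h(\Q(\sqrt{-p}))\gg\sqrt p\,(\log p)^{-C}$ for every prime $p$. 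That is known only conditionally (e.g.\ under GRH); Siegel gives just $\gg_\eta p^{1/2-\eta}$.

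The provable statements are therefore yours (error $O(\log\log p)$ outside one square class of $n$) and the Siegel version (every $n$, error $o(\log p)$, ineffective constant). Your Cauchy--Schwarz argument also gives the Siegel version if you insert Siegel in place of Tatuzawa.
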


\begin{proof}
Averaging $r_E(n)$ over $\Sp$, we obtain
\begin{equation}
\label{eq:all-connection-first-moment}
\begin{aligned}
\mathbb E[r_E]
&=
\frac1{h_p}
\sum_{d^2\mid n}A_p^{\BH}(n/d^2)\\
&=
\frac{\sqrt{pn}}{\pi h_p}
\sum_{d^2\mid n}
\frac{\mathcal L_{p,n/d^2}}d\\
&=
\frac{\sqrt{pn}}{\pi h_p}\mathcal H_{p,n}.
\end{aligned}
\end{equation}

We next bound the contribution from
multiple connecting isogenies.
For all $r\in\Z_{\geq 0}$, 
\[
0\leq
r-\mathbf1_{\{r>0\}}
\leq\binom r2.
\]
It gives
\begin{equation}
\label{eq:all-connection-bonferroni}
0\leq
\mathbb E[r_E]
-
\Prob(\mathsf A_{E,n})
\leq
\mathbb E\left[\binom{r_E}{2}\right]=
\frac1{2h_p}
\sum_{\substack{d^2\mid n\\e^2\mid n}}
C_p\left(\frac n{d^2},\frac n{e^2}\right).
\end{equation}
By \Cref{lem:general-mixed-pair-bound},
\[C_p\left(\frac n{d^2},\frac n{e^2}\right)=
O\left(
\frac n{de}\,
s\left(
\gcd\left(\frac n{d^2},\frac n{e^2}\right)
\right)
\bigl(1+\log(2n)\bigr)^2
\right).
\]
Then we bound the sum on the right hand side: for $d^2,e^2\mid n$, we have
\(
s\left(
\gcd\left(\frac n{d^2},\frac n{e^2}\right)
\right)
=
\frac{s(n)}{\operatorname{lcm}(d,e)}\). 
The sum $\sum\limits_{\substack{e^2\mid n,d^2\mid n\\e\geq1,d\geq1}}\frac{s(n)}{\operatorname{lcm}(d,e)}$ is uniformly bounded, since
\[
\begin{aligned}
\sum_{\substack{e^2\mid n,d^2\mid n\\d,e\geq1}}
\frac1{de\operatorname{lcm}(d,e)}&\leq
\sum_{d,e\geq1}
\frac1{de\operatorname{lcm}(d,e)}\\
&=
\sum_{d,e\geq1}
\frac{\gcd(d,e)}{d^2e^2}\\
&=
\zeta(2)^2
\sum_{r\geq1}\frac{\varphi(r)}{r^4}\\
&=
\frac{\zeta(2)^2\zeta(3)}{\zeta(4)}
<\infty.
\end{aligned}
\]
Consequently,
\[
\sum_{\substack{d^2\mid n\\e^2\mid n}}
C_p\left(\frac n{d^2},\frac n{e^2}\right)=
O\left(
ns(n)\bigl(1+\log(2n)\bigr)^2
\right).\]
By $h_p=p/12+O(1)$,
\[
\mathbb E\left[\binom{r_E}{2}\right]
=
O\left(
\frac{ns(n)}p
\bigl(1+\log(2n)\bigr)^2
\right).
\]
Combining this with
\eqref{eq:all-connection-first-moment}
and \eqref{eq:all-connection-bonferroni}, as well as $h_p=p/12+O(1)$
proves 
\begin{equation}
\label{eq:all-connection-arithmetic}
\Prob(\mathsf A_{E,n})
=\frac{12}{\pi}\sqrt{\frac{n}{p}}\mathcal H_{p,n}+
O\left(
\frac{ns(n)}p
\bigl(1+\log(2n)\bigr)^2
\right),
\end{equation}
where the implied constant is absolute.

Consider \[\frac{\Prob(\mathsf{A}_{E,n})}{\frac{12}{\pi}\sqrt{\frac{n}{p}}\mathcal H_{p,n}}=1+O\left(\sqrt{\frac{n}{p}}\frac{s(n)(1+\log (2n))^2}{\mathcal{H}_{p,n}}\right).\]
For every fixed $\eta>0$, Siegel's theorem
\cite{Siegel1935}, together with the
induced-character formula and Mertens'
product estimate, gives
\[
\mathcal L_{p,n}
\geq c_\eta p^{-\eta}
\]
uniformly for $1\leq n\leq N$ for a constant $c_\eta>0$.
Since all terms in $\mathcal H_{p,n}$ are positive,
\[
\mathcal H_{p,n}\geq\mathcal L_{p,n}
\geq c_\eta p^{-\eta}.
\]
Since $s(n)\leq\sqrt n$ and $n\leq N\leq p^{\frac{1}{3}}$, 
\[
\frac{
\Prob(\mathsf A_{E,n})
}{
(12/\pi)\sqrt{n/p}\,\mathcal H_{p,n}
}
=
1+
O_\eta\left(
p^{-1/6+\eta}(\log p)^2
\right).
\]
Choosing any fixed $0<\eta<1/6$
proves uniformly for $1\leq n\leq N$:
 \[\frac{\Prob(\mathsf{A}_{E,n})}{\frac{12}{\pi}\sqrt{\frac{n}{p}}\mathcal \mathcal H_{p,n}}=1+o(1).\]
Finally, for every fixed $\eta>0$,
\[
c_\eta p^{-\eta}
\leq \mathcal H_{p,n}
\leq C(\log p)^2.
\]
Since $\eta>0$ is arbitrary, this implies
\[
\log \mathcal H_{p,n}=O(\log\log p)
\]
uniformly for $1\leq n\leq N$.

Taking logarithms,
we obtain
\[
\begin{aligned}
\log\Prob(\mathsf A_{E,n})
&=
\frac12\log\frac np
+\log \mathcal H_{p,n}+O(1)\\
&=
\frac12\log\frac np+O(\log\log p).
\end{aligned}
\]
This proves \eqref{eq:all-connection-logarithmic}.
\end{proof}

\paragraph{Remark} From the proof, we have:
\begin{equation}
\label{eq:all-connection-normalized}
\frac{\Prob(\mathsf A_{E,n})}
{\sqrt{12n/p}}
=
\frac{\sqrt{12}}{\pi}\mathcal H_{p,n}
+
O\left(p^{-1/6}(\log p)^2\right).
\end{equation}
Thus, to disprove the proposed uniform
fixed-constant asymptotic in this family,
it suffices to construct a sequence
$(p_j,n_j)$ for which
\[
\mathcal H_{p_j,n_j}
\not\longrightarrow\frac{\pi}{\sqrt{12}}.
\]
However, there are lots of sequence without this limit. So the constant term in Heuristic M does not always hold.

\end{document}